\renewcommand{\eprint}[1]{\href{https://arxiv.org/abs/#1}{#1}}
\def\O {{\mathcal{O}}}
\newcommand{\ard}{{\mathbf{d}}}
\newcommand{\CappedVertexTau}{{\begin{tikzpicture}
\draw [ultra thick] (0,0) -- (2,0);
\draw [fill] (2,0) circle [radius=0.1];
\draw [ultra thick] (0.1,0.2) to [out=210,in=150] (0.1,-0.2);
\node at (2.3,0) {$\tau$};
\end{tikzpicture}
}}
\newcommand{\VertexTau}{{\begin{tikzpicture}
\draw [ultra thick] (0.13,0) -- (2,0);
\draw [fill] (2,0) circle [radius=0.1];
\draw [very thick] (0,0) circle [radius=0.13];
\node at (2.3,0) {$\tau$};
\end{tikzpicture}}}
\newcommand{\CappedVertex}{{\begin{tikzpicture}
\draw [ultra thick] (0,0) -- (2,0);
\draw [fill] (0,0) circle [radius=0.1];
\draw [ultra thick] (1.9,0.2) to [out=330,in=30] (1.9,-0.2);
\end{tikzpicture}
}}
\def\fp{ {\textbf{p}}  }
\newcommand{\Complex}{\mathbb{C}}
\newcommand{\ellSN}{\mathop{\operator@font sn}\nolimits}
\newcommand{\ellCN}{\mathop{\operator@font cn}\nolimits}
\newcommand{\ellDN}{\mathop{\operator@font dn}\nolimits}
\newcommand{\ellAM}{\mathop{\operator@font am}\nolimits}
\newcommand{\ellK}{\mathop{\smash{\operator@font K}\vphantom{a}}\nolimits}
\newcommand{\ellE}{\mathop{\smash{\operator@font E}\vphantom{a}}\nolimits}
\newcommand{\beq}{\begin{equation}}
\newcommand{\eeq}{\end{equation}}
\def\mr@ignsp#1 {\ifx\:#1\@empty\else #1\expandafter\mr@ignsp\fi}%
\newcommand{\multiref}[1]{\begingroup
\xdef\mr@no@sparg{\expandafter\mr@ignsp#1 \: }%
\def\mr@comma{}%
\@for\mr@refs:=\mr@no@sparg\do{\mr@comma\def\mr@comma{,}\ref{\mr@refs}}%
\endgroup}
\newcommand{\hypref}[2]{\ifx\href\asklfhas #2\else\href{#1}{#2}\fi}
\newcommand{\Secref}[1]{Section~\multiref{#1}}
\newcommand{\secref}[1]{Sec.~\multiref{#1}}
\renewcommand{\eqref}[1]{(\multiref{#1})}
\def\[{\begin{equation}}
\def\]{\end{equation}}
\def\<{\begin{eqnarray}}
\def\>{\end{eqnarray}}
\newtheorem{theorem}{Theorem}[section]
\newtheorem{lemma}[theorem]{Lemma}
\newtheorem{proposition}[theorem]{Proposition}
\newtheorem{corollary}[theorem]{Corollary}
\newtheorem{definition}[theorem]{Definition}
\asklfhas\newcommand{\href}[2]{#2}\fi
\title{Quantum K-theory of Quiver Varieties and Many-Body Systems}
\author{Peter Koroteev}
\author{Petr P. Pushkar}
\author{Andrey V. Smirnov}
\author{Anton M. Zeitlin}
\address{\newline Peter Koroteev
\newline
Department of Mathematics,\newline
University of California at Davis,\newline
Mathematical Sciences Building,\newline
One Shields Ave,\newline
University of California,\newline
Davis, CA 95616}
\address{\newline
Petr P. Pushkar,\newline
Department of Mathematics, \newline
Columbia University,\newline
Room 509, MC 4406,\newline
2990 Broadway,\newline
New York, NY 10027,\newline
pushkar@math.columbia.edu}
\address{\newline
Andrey Smirnov,\newline
Department of Mathematics, \newline
University of California at Berkeley;\newline
Institute for Problems of \newline
Information Transmission \newline
Bolshoy Karetny 19, Moscow 127994, Russia\newline
smirnov@math.berekeley.edu}
\address{
\newline
Anton M. Zeitlin,\newline
Department of Mathematics,\newline
Louisiana State University,\newline
303 Lockett Hall, \newline
Baton Rouge, LA 70803;\newline
IPME RAS, V.O. Bolshoj pr., 61, 199178,\newline
St. Petersburg\newline
zeitlin@lsu.edu,\newline
http://math.lsu.edu/$\sim$zeitlin \newline
http://www.ipme.ru/zam.html  }
\begin{document}
\maketitle

\begin{abstract}
We define quantum equivariant K-theory of Nakajima quiver varieties. We discuss type A in detail as well as its connections with quantum XXZ spin chains and trigonometric Ruijsenaars-Schneider models. Finally we study a limit which produces a K-theoretic version of  results of Givental and Kim, connecting quantum geometry of flag varieties and Toda lattice.
\end{abstract}

\setcounter{tocdepth}{1}
\tableofcontents

\section{Introduction}\label{Sec:Intro}
\subsection{Some Prehistory and Earlier Results} 
The seminal papers of Nekrasov and Shatash\-vili \cite{Nekrasov:2009ui,nekrasov:2009uh} paved the road for close interactions between quantum geometry of certain class of algebraic varieties and quantum integrable systems. Early signs of such a fruitful collaboration between quantum cohomology/quantum K-theory and integrability were noted in mathematics literature in the works of Givental et al \cite{givental1995, 2001math8105G}.

The ideas outlined in these articles gave rise to new developments \cite{Braverman:2010ei,Braverman:2011si, 2012arXiv1211.1287M} followed by other important results, see e.g. \cite{Rimanyi:2014ef,Gorbounov:2014to,Gorbounov:2014fs,Okounkov:2015aa,Okounkov:2016sya,nekrasov:2013aa}.

Recently the basic example, considered in \cite{Nekrasov:2009ui,nekrasov:2009uh} in the physical context of 3d gauge theories, was described from mathematical point of view \cite{Pushkar:2016qvw}. 

In particular, the relation between quantum equivariant K-theory of cotangent bundles to Grassmannians and the so-called XXZ model (see e.g. \cite{Bogolyubov:1993qism,Reshetikhin:2010si}) was fully examined.
The Hilbert space of the XXZ spin chain is identified with the space of equivariant localized K-theory of disjoint union of $T^*Gr(k,n)$ for all $k$ and fixed $n$, considered in the basis of fixed points. 
Using a different method than in standard Gromov-Witten-inspired approach to quantum products, the quantum K-theory ring was defined, as well as the generators using the theory of quasimaps to GIT quotients \cite{Okounkov:2015aa}, \cite{Ciocan-Fontanine:2011tg}. 
Such generators of the quantum K-theory ring, which in \cite{Pushkar:2016qvw} were called quantum tautological bundles  are the deformations (via K\"ahler parameter) of the exterior powers of these tautological bundles. It was shown that their eigenvalues are the symmetric functions of roots of Bethe Ansatz equations. The generating function for such quantum tautological bundles is known in the theory of integrable systems as Baxter $Q$-operator which contains information about the spectrum of genuine physical Hamiltonians.

\subsection{Main Results and the Structure of the Paper} 
The construction of \cite{Pushkar:2016qvw} can certainly be extended beyond Grassmannians to a large class of Nakajima quiver varieties and this is what the first part of the current work is about. 

In Section \ref{Sec:QuantumK} we review and generalize main concepts of \cite{Pushkar:2016qvw} to a general situation. In Subsection \ref{ClassK} we remind basic notions of Nakajima quiver varieties as GIT quotients and their equivariant K-theory. Subsection \ref{SubsecQuasimap}  is devoted to a brief review of theory of nonsingular and relative quasimaps to quiver varieties. Unlike stable maps, the quasimap is a combination of a certain vector bundle on a base curve, together with its section, 
which uses the presentation of Nakajima quiver variety is a GIT quotient. 
That allows us to define in Subsections \ref{PNVSGO} and \ref{Subsec:QuantumKring} two important notions. The first one is the notion of a quantum tautological class, defined using pushforwards via evaluation map with a relative condition, as a deformation of the corresponding equivariant K-theory tautological class. The second one is the deformed product on equivariant K-theory.  In this paper we will refer to the latter as the \textit{quantum product} and the resulting unital ring will be referred to as \textit{quantum K-theory ring}. 
This is different from a standard notion of quantum products defined using stable map theory in the K-theoretic analogue of Gromov-Witten theory. 
In the end of Section \ref{Subsec:QuantumKring} we note, that the quantum tautological classes generate the entire quantum K-theory ring.

We would like to emphasize that in the standard K-theoretic version of Gromov-Witten approach to flag varieties (see e.g. recent results \cite{act1}, \cite{act2}), the analogue of our deformed product, known as a small quantum product, is determined by the deformation of the structure constants. 

Then it is a formidable task in describing the quantum K-ring using generators and relations to verify whether the structure constants are polynomials in K\"ahler parameters -- the property which is given for granted in the quantum cohomology.  Here we are free of these issue and our quantum classes are generators \textit{a priori}.

In Subsection \ref{Subsec:Vertex} the most important tools for the computations in our quantum K-theoretic framework are introduced, known as \textit{vertex} functions. They can be of two types, \textit{bare} and \textit{capped}. These are objects, very close to quantum tautological classes, namely they are equivariant K-theory classes (localized K-theory classes for bare vertices) defined as equivariant pushforwards with nonsingular and relative conditions correspondingly, so that extra equivariant parameter is introduced on a base curve. This equivariant parameter plays a major role in our approach. Namely, the \textit{capping  operator} which relates these two types of vertex functions, satisfies a {\it difference equation}, which is a central topic of \cite{Okounkov:2016sya} as a part of a bigger system of difference equations involving quantum Knizhnik-Zamolodchikov equations \cite{Frenkel:92qkz}.

In Section \ref{Subsec:COandDE} we restrict ourselves to the subclass of Nakajima varieties, such that the set of fixed points under the action of equivariant torus is finite, which includes (partial) flag varieties. Using that in the end of  Section \ref{Sec:QuantumK} we derive, by generalizing the results of  \cite{Pushkar:2016qvw}, the explicit formula for the eigenvalues of multiplication operators on quantum tautological classes via the asymptotics of vertex functions, when equivariant parameter on a base curve is close to identity.

From \Secref{Sec:QuantumKPartialFlags} onwards we restrict ourselves to our main example the cotangent bundles to (partial) flag varieties, which form a subclass of Nakajima varieties for quivers of type $A_n$. In this case one can identify the localized K-theory of all possible cotangent bundles to partial flag varieties for given $n$ with the Hilbert space of XXZ $sl(n)$ model. 
We explicitly compute the vertex functions in Subsection \ref{Subsec:BareVertex}, so that in Subsection \ref{Subsec:BEandBO} we arrive to our first important theorem, which in short can be restated as follows (for explicit formulas, see Theorem \ref{betheth}):\\

\noindent {\bf Theorem 1a.} {\it The eigenvalues of the operators of multiplication by quantum tautological classes are the symmetric functions of Bethe roots, the solutions of Bethe ansatz equations for $sl(n)$ XXZ spin chain.}\\

This statement is a generalization of similar statement for Grassmannians and $sl(2)$ XXZ spin chain in \cite{Pushkar:2016qvw}.

It makes sense to think about the generating functions for quantum tautological bundles, corresponding to exterior powers of every given tautological bundle. The eigenvalues of the 
resulting operators give generating functions for Bethe roots. In the theory of quantum integrable systems those are known as the {\it Baxter operators}.

We also make the following ``compact reduction" of our constructions. In Subsection \ref{Sec:Compact} we discuss the vertex functions and therefore quantum tautological classes in the case when we count only quasimaps to the compact space of partial flags, suppressing contributions of the fiber. It turns out that the corresponding vertex functions are easy to compute, by sending the equivariant parameter, corresponding to the rescaling of symplectic form to infinity. This leads to the following Theorem  (see 
 (\ref{betheth}) for explicit formulae):\\

\noindent {\bf Theorem 1b.} {\it The eigenvalues of the operators of multiplication by quantum tautological classes on $G/P$ are the symmetric functions of Bethe roots, the solutions of Bethe ansatz equations, generalizations of the ones for 5-vertex model.}\\

 In Section \ref{Sec:XXZ/tRS}, we restrict ourselves to the case of cotangent bundle of complete flag variety, and we describe these K-theory rings using generators and relations by employing the duality between XXZ spin chains and trigonometric Ruijsenaars-Schneider (tRS) models \cite{MR929148,MR887995,MR851627}. 

This brings us back to fundamental papers \cite{givental1995, 2001math8105G}, where connection of quantum geometry and integrability is done through multi-particle systems. Givental and Kim  \cite{givental1995} using their approach described the quantum equivariant cohomology ring of complete flag varieties as an algebra of functions on the phase space of Toda lattice, so that the Hamiltonians are taking fixed values, determined by equivariant parameters, namely the space of regular functions on invariant Lagrangian subvariety of Toda lattice. It was suggested in \cite{2001math8105G} and then in \cite{Braverman:2011si} that the K-theoretic version of these results should involve finite difference (relativistic) Toda system. 

Our main result of Section \ref{Sec:XXZ/tRS} is the following Theorem, which allows to describe the quantum K-theory ring using different generators and relations, via trigonometric Ruijsenaars-Schneider (tRS) models:
(for explicit formulae see Theorem  \ref{Th:KTmain}):\\

\noindent {\bf Theorem 2a.} {\it The quantum K-theory ring of the cotangent bundle of a complete flag variety is an algebra of functions on a certain Lagrangian subvariety of the phase space of tRS model.}\\

In a limiting procedure for $G/P$, which we consider in  \Secref{Sec:tRSToda}, we obtain the result suggested by \cite{givental1995, 2001math8105G} (see Theorem \ref{todath} for explicit formulas): \\

\noindent {\bf Theorem 2b.} {\it The quantum K-theory ring  of a complete flag variety is an algebra of functions on a certain Lagrangian subvariety of the phase space of relativistic Toda lattice.}\\

Such limiting procedure between two integrable systems was discussed by many representation theorists, see e.g. \cite{Etingof:ac,Gerasimov:2008ao,Braverman:2010ei}, which by the results of this paper has a pure geometric flavor.

\subsection{Connections to physics literature and beyond}

In physics literature \cite{Gaiotto:2013bwa,Bullimore:2015fr} our main statements were conjectured, based on connections of Nakajima quiver varieties and 3d supersymmetric gauge theories. In particular, the quantum equivariant K-theory of the cotangent bundle to complete flag variety was described in \cite{Bullimore:2015fr} via trigonometric Ruijsenaars-Schneider system. In addition, as expected, with cotangent fiber being removed in a certain limit, that model reduces to finite difference Toda system.  In the current work we prove these physics conjectures thereby bringing together ideas of Givental-Kim-Lee and Nekrasov-Shatashvili.

It was recently shown \cite{Aganagic:2017be,Aganagic:2017la} that capping operators of quantum K-theory of Nakajima quiver varieties, which satisfy quantum Knizhnik-Zamolodchikov equations, can be represented using vertex functions for certain K-theory classes which correspond to the K-theoretic version of {\it stable basis} \cite{Okounkov:2015aa}. In \cite{Bullimore:2015fr} it was proposed that vertex functions, constructed from supersymmetric gauge theories, are the eigenfunctions of quantum trigonometric Ruijsenaars-Schneider Hamiltonians (Macdonald operators). In this paper we work with classical Hamiltonians and find the connection, via the XXZ spin chain, between these operators and quantum K-theory of Nakajima quiver varieties. Thus there exists a correspondence between quantum Knizhnik-Zamolodchikov equations and equations of motion of trigonometric Ruijsenaars-Schneider model, which was further studied in \cite{Koroteev:2020}.\\

\noindent
\textbf{Acknowledgements.}
First of all we would like to thank Andrei Okounkov for invaluable discussions, advises and sharing with us his fantastic viewpoint on modern quantum geometry.
We are also grateful to D. Korb and Z. Zhou for their interest and comments.

The work of A. Smirnov was supported in part by RFBR grants under numbers
15-02-04175 and 15-01-04217 and in part by NSF grant DMS--2054527. 
The work of P. Koroteev, A.M. Zeitlin and A. Smirnov is supported in part by AMS Simons travel grant.
A. M. Zeitlin is partially supported by Simons Collaboration Grant, Award ID: 578501.

\section{Quantum K-theory}\label{Sec:QuantumK}
\subsection{Classical Equivariant K-theory}\label{ClassK}

In this section we give a brief reminder of the classical equivariant K-theory of Nakajima quiver varieties. For a more detailed introduction to quiver varieties, one can consult \cite{Ginzburg:} and for their study in K-theoretic setting one can look in \cite{Nakajima:2001qg} or \cite{Negut:thesis}.

A quiver is a collection of vertices and oriented edges connecting them ($I$ denotes the set of vertices). A framed quiver is a quiver, where the set of vertices is doubled, and each of the vertices in the added set has an edge going from it to the vertex, whose copy it is. It is common to depict the original vertices by circles, and their copies by squares above them. Here is an example of a framed quiver:

\begin{center}
\begin{tikzpicture}[xscale=1.5, yscale=1.5]
\fill [ultra thick] (0-0.1,1) rectangle (0.1,1.2);
\fill [ultra thick] (1-0.1,1) rectangle (1.1,1.2);
\fill [ultra thick] (2-0.1,1) rectangle (2.1,1.2);
\draw [fill] (0,0) circle [radius=0.1];
\draw [fill] (1,0) circle [radius=0.1];
\draw [fill] (2,0) circle [radius=0.1];
\draw [->, ultra thick] (0.1,0) -- (0.9,0);
\draw [->, ultra thick] (2,0) -- (1.1,0);
\draw [->, ultra thick] (0,1) -- (0,0.1);
\draw [->, ultra thick] (0,0) to [out=330,in=210] (2,-0.1);
\draw [->, ultra thick] (1,1) -- (1,0.1);
\draw [->, ultra thick] (2,1) -- (2,0.1);
\end{tikzpicture}
\end{center}

A representation of a framed quiver is a set of vector spaces $V_i,W_i$, where $V_i$ correspond to original vertices, and $W_i$ correspond to their copies, together with a set of morphisms between these vertices, corresponding to edges of the quiver.

For a given framed quiver, let $R=\text{Rep}(\mathbf{v},\mathbf{w})$ denote the linear space of quiver representation with dimension vectors $\mathbf{v}$ and $\mathbf{w}$, where $\mathbf{v}_i=\text{dim}\ V_i$, $\mathbf{w}_i=\text{dim}\ W_i$. Then the group $G=\prod_i GL(V_i)$ acts on this space in an obvious way. As any cotangent bundle, $T^*R$ has a symplectic structure. This action of $G$ on this space is Hamiltonian with moment map $\mu :T^*R\to \text{Lie}{(G)}^{*}$. Let $L(\mathbf{v},\mathbf{w})=\mu^{-1}(0)$ be the zero locus of the moment map.

The Nakajima variety $X$ corresponding to the quiver is an algebraic symplectic reduction
$$
X=L(\mathbf{v},\mathbf{w})/\!\! /_{\theta}G=L(\mathbf{v},\mathbf{w})_{ss}/G,
$$
depending on a choice of stability parameter $\theta\in {\mathbb{Z}}^I$ (see \cite{Ginzburg:} for a detailed definition). The group
$$\prod GL(Q_{ij})\times
\prod GL({W}_i)\times \mathbb{C}^{\times}_\hbar$$
acts as automorphisms of $X$, coming form its action on $\text{Rep}(\mathbf{v},\mathbf{w})$. Here $Q_{ij}$ stands for the vector space of dimension coming from the incidence matrix of the quiver, i.e. the number of edges between vertices $i$ and $j$,  $\mathbb{C}_{\hbar}$ scales cotangent directions with weight $\hbar$ and therefore symplectic form with weight $\hbar^{-1}$. Let us denote by $\mathsf{T}$ a maximal torus of this group.

The main object of study in this paper will be a certain deformation of the classical equivariant K-theory ring $K_{\mathsf{T}}(X)$. For a Nakajima quiver variety $X$ one can define a set of tautological bundles on it $V_i, W_i, i\in I$ as bundles constructed by applying the associated bundle construction to the G- representations V and W. It follows from this construction, that  all bundles $W_i$ are topologically trivial. Tensorial polynomials of these bundles and their duals generate $K_{\mathsf{T}}(X)$ according to Kirwan's surjectivity theorem, which is recently proven in \cite{McGerty:2016kir}. Let $(\cdot,\cdot)$ be a bilinear form on $K_{\mathsf{T}}(X)$ defined by the following formula
\beq\label{bilf}
(\mathcal{F},\mathcal{G})=\chi(\mathcal{F}\otimes\mathcal{G}\otimes K^{-1/2}),
\eeq
where $K$ is the canonical class and $\chi$ is the equivariant Euler characteristic. Nakajima quiver varieties are a special class of varieties, for which there always exists a square root of the canonical bundle, and it can be chosen canonically from the construction (see Section 6.1 in \cite{Okounkov:2015aa}). The variety $X$ is almost never compact, apart from the cases when it is a point. The locus of fixed points of $\mathsf{T}$, on the other hand, is compact. This allows us to talk about the equivariant Euler characteristic via localization. The necessary extra shift of the bilinear form described above will be explained below.

\subsection{Quasimaps}\label{SubsecQuasimap}
In this section we give a definition of quasimaps and discuss the properties and types of quasimaps we will use.

\begin{definition} \label{quasimap}
	A stable quasimap to a Nakajima quiver variety from a genus 0 curve $\mathcal{D}$ to $X$ relative to points $p_1,\cdots,p_m \in \mathcal{D}$  is given by the following data
	$$
	(\mathcal{C},p_1',\dots,p_m',P,f, \pi),
	$$	
	where
	\begin{itemize}
		\item $\mathcal{C}$ is a connected, at most nodal genus zero projective curve and $p_i'$ are nonsingular points of $\mathcal{C}$,
		\item $P$ is a principal $G$ - bundle over $\mathcal{C}$,
		
		\item f is a section of the fiber bundle 
		\beq \label{qm}
		{\rm p} : P\times_{{G}} (R \oplus R^{*})  \rightarrow \mathcal{C}
		\eeq
		over $\mathcal{C}$ satisfying $\mu =0$, where $R=\text{Rep}(\mathbf{v},\mathbf{w})$ - is a representation of $G$
		defined in Section \ref{ClassK} (the moment map condition is satisfied pointwise, so for every point we can consider the moment map and the image of the section $f$ restricted to every point should be 0),   
		\item $\pi: \mathcal{C} \rightarrow \mathcal{D}$ is a regular map,
		\end{itemize}
		satisfying the following conditions:
		\begin{enumerate}
		\item There is a distinguished component $\mathcal{C}_0$ of $\mathcal{C}$ such that 
		$\pi$ 	restricts to an isomorphism $\pi:\mathcal{C}_0 \cong \mathcal{D} $ and $\pi(\mathcal{C}\setminus \mathcal{C}_{0})$ is zero-dimensional (possibly empty). 
		
		\item $\pi(p_i')=p_i$.
		
		\item $f(p)$ is stable for all $p\in \mathcal{C}\setminus B$ where $B$ is a finite (possibly empty) subset of $\mathcal{C}$. 
		
		\item The set $B$ is disjoint from the nodes and points $p'_1,\dots, p'_m$. 
		
		\item $\omega_{\tilde{\mathcal{C}}}( \sum_i p'_i + \sum_j q_i) \otimes {\mathcal{L}}_{\theta}^{\epsilon}$ is ample for every rational $\epsilon >0$, 
		where 
		${\mathcal{L}}_{\theta}=P\times_{G}\mathbb{C}_{\theta}$ 
		($\theta=\det$ 
		is the character of $G$),
		$\tilde{\mathcal{C}}$ is the closure of $\mathcal{C}\setminus \mathcal{C}_0$ and $q_i$ are the nodes  $\mathcal{C}_0\cap \tilde{\mathcal{C}}$. 
		\end{enumerate}
	
\end{definition}
We call $\mathcal{D}$ the base curve of the quasimap (although for some quasimaps the actual domain might be bigger). Note that it can have one or multiple components. 

Let $(\mathcal{C},p_1',\dots,p_m',P,f, \pi)$ be a quasimap and let
$V_1,V_2,\dots$ be representations of 
$G$ as in Section \ref{ClassK}. Let us denote by 
\beq \label{qmvdef}
\mathscr{V}_i=P\times_{G} V_i \rightarrow \mathcal{C}
\eeq the associated rank $\mathbf{v}_i$ vector bundle over $\mathcal{C}$ and bundles $\mathscr{W}_i$ and $\mathscr{R}$ defined in an analogous way. 
 
\begin{definition} \label{defdegree}
	The degree of a quasimap $(\mathcal{C},p_1',\dots,p_m',P,f, \pi)$ is the vector of degrees of vector bundles 	$\mathscr{V}_i$ associated to it. 
\end{definition}

\begin{definition}
	Let $\textsf{QM}^{\ard}_{{\rm{relative}}, p_1,\cdots,p_m}$ denote the stack parameterizing stable genus zero quasimaps relative to $p_1,\dots,p_m$, (i.e. the data of Definition \ref{quasimap}) of fixed degree ${\ard}$. Two quasimaps are considered isomorphic if there is an isomorphism between the bundles which intertwines the sections.  	
\end{definition}
%
%
For any point on the curve $p\in \mathcal{C}$ we have an evaluation map to the quotient stack $\text{ev}_p : \textsf{QM}^{\ard} \to L(\mathbf{v},\mathbf{w})/G$ defined by
$\text{ev}_p(f)=f(p)$. Note that the quotient stack contains $X$ as an open subset corresponding to locus of semistable points: $$X={\mu}_{ss}^{-1}(0)/G\subset L(\mathbf{v},\mathbf{w})/G.$$
A quasimap $f$ is called nonsingular at $p$ if $f(p)\subset X$ and the quasimap is not relative to $p$.  In short, we conclude that the open subset ${{\textsf{QM}}^\ard}_{\text{nonsing  p}}\subset {{\textsf{QM}}^\ard}$ of quasimaps nonsingular at the given point $p$
is endowed with a natural evaluation map:
\beq
{{\textsf{QM}}^\ard}_{\text{nonsing }\, p} \stackrel{{\text{ev}}_p}{\longrightarrow} X
\eeq
that sends a quasimap to its value at $p$. The moduli space of relative quasimaps ${{\textsf{QM}}^\ard}_{\text{relative} \, p}$ is a resolution of ${\text{ev}}_p$ (or compactification), meaning we have a commutative diagram:
\begin{center}
\begin{tikzpicture}[node distance =5.1em]

  \node (rel) at (2.5,1.5) {${{\textsf{QM}}^\ard}_{\text{relative}\, p}$};
  \node (nonsing) at (0,0) {${{\textsf{QM}}^\ard}_{\text{nonsing}\,  p}$};
  \node (X) at (5,0) {$X$};
  \draw [->] (nonsing) edge node[above]{$\text{ev}_p$} (X);
  \draw [->] (rel) edge node[above]{$\widetilde{\text{ev}}_p$} (X);
  \draw [right hook->] (nonsing) edge  (rel);
    \end{tikzpicture}
\end{center}
with a \textbf{proper} evaluation map $\widetilde{\text{ev}}_p$ from ${{\textsf{QM}}^\ard}_{\text{relative}\, p}$ to $X$. Definition \ref{quasimap}  constructs all the spaces mentioned above, as well as possible combinations with multiple relative points. 

These moduli spaces have a natural action of maximal torus $\mathsf{T}$, lifting its action from $X$. When there are at most two special (relative or nonsingular or marked) points and the base curve is ${\mathbb{P}}^1$ we extend $\mathsf{T}$ by additional torus $\mathbb{C}^{\times}_q$, which scales ${\mathbb{P}}^1$ such that the tangent space $T_{0} {\mathbb{P}}^1$ has character denoted by $q$. We call the full torus by $G=\mathsf{T}\times \mathbb{C}^{\times}_q$.

\subsection{Picture Notations, Virtual Structure and Gluing Operator }\label{PNVSGO}

In this section we introduce some notations and discuss some structures and and properties of quasimap spaces. There are no new results presented in this section, it is more a collection of things we will use to construct the further studied objects. Most definitions and properties presented here are presented in full generality in \cite{Ciocan-Fontanine:2011tg} or in \cite{Okounkov:2015aa}.

\subsubsection{Picture Notation}In the previous section, several different types of quasimap invariants and conditions were introduced. For the quasimaps considered, the base curve is fixed and it is important, which conditions we impose at different points. All this information is hard to read off a formula. This makes it is convenient to use picture notation, introduced by Okounkov in \cite{Okounkov:2015aa}. The picture notation will almost always be accompanied by a formula presentation, as it is not always obvious what exactly is considered (generating function for enumerative invariants/quantum operator or a single invariant). Yet, one can argue that this notation makes it clearer what kind of invariants are considered. Here are picture notations, which will be used in this manuscript:
\vspace{0.2in}

\hspace{45pt}\begin{tikzpicture}
\draw [ultra thick] (0,0) -- (2,0);
\end{tikzpicture}  \hspace{10pt}denotes the base curve ${\mathbb{P}}^1$,

\vspace{0.2in}
\hspace{45pt}\begin{tikzpicture}
\draw [ultra thick] (0,0) -- (1,0);
\draw [fill] (1,0) circle [radius=0.1];
\end{tikzpicture}\hspace{10pt} denotes a marked point (any chosen point on the curve),

\vspace{0.2in}
\hspace{45pt}\begin{tikzpicture}
\draw [ultra thick] (0,0) -- (1,0);
\draw [ultra thick] (0.9,0.2) to [out=330,in=30] (0.9,-0.2);
\end{tikzpicture}\hspace{10pt} denotes a relative point,

\vspace{0.2in}
\hspace{45pt}\begin{tikzpicture}
\draw [ultra thick] (0,0) -- (0.87,0);
\draw [very thick] (1,0) circle [radius=0.13];
\end{tikzpicture}\hspace{10pt} denotes a nonsingular point.

\vspace{0.2in}
\hspace{45pt}\begin{tikzpicture}
\draw [ultra thick] (2,0) to [out=0,in=240] (2.7,0.4);
\draw [ultra thick] (3,0) to [out=180,in=300] (2.3,0.4);
\end{tikzpicture}\hspace{10pt} denotes a node on the base curve.
\vspace{0.2in}

Here is an example of this notation in use:

\vspace{0.1in}
\begin{center}
\CappedVertex
\end{center}

The picture above stands for the following generating function of invariants:

$$\sum\limits_{\ard=\overrightarrow{0}}^{\infty} z^\ard  {\rm{ev}}_{p_2, *}\Big(\textsf{QM}^\ard_{{\rm{relative}} \, p_2}, \widehat{{\O}}_{{\rm{vir}}} \Big).$$

\subsubsection{Virtual Structure} The moduli spaces of quasimaps constructed in the previous section have perfect deformation-obstruction theory \cite{Ciocan-Fontanine:2011tg}. This allows one to construct a tangent virtual bundle $T^{\textrm{vir}}$, a virtual structure sheaf ${\mathcal{O}}_{\rm{vir}}$ and a virtual canonical bundle. For Nakajima quiver varieties the virtual canonical bundle has a natural choice of a square root. Adjusting the virtual structure sheaf by this square root makes it into the symmetrized virtual structure sheaf $\hat{\mathcal{O}}_{\rm{vir}}$. It is this sheaf that we choose for our enumerative invariants. The motivation of such a choice is given in section 3.2 of \cite{Okounkov:2015aa}. In this section we do not intent to give the full construction of the virtual structure sheaf, but we try to describe some of it properties and provide a way for computing it.\\

First of all, we state a formula for the reduced virtual tangent bundle. Let $(\{\mathscr{V}_i\},\{W_i\})$ be the data defining a quasimap. Then the virtual tangent bundle is an equivariant K-theory class, which when restricted to a fixed point in the space of quasimaps is:
\beq
T^{\textrm{vir}}_{(\{\mathscr{V}_i\},\{\mathscr{W}_i\})}
{\textsf{QM}}^{\ard}= 
H^{\bullet}(\mathscr{R}\oplus\hbar\, \mathscr{R}^{*} ) -(1+\hbar) \bigoplus_{i} Ext^{\bullet}(\mathscr{V}_i,\mathscr{V}_i),
\eeq
where the bundle $\mathscr{R}$ is defined as in \ref{quasimap}. Let us address the different terms in this formula:
\begin{itemize}
\item The term $H^{\bullet}(\mathscr{R}\oplus\hbar\, \mathscr{R}^{*} )$ keeps track of deformations and obstructions of the section $f$.
\item The term $-(1+\hbar) \bigoplus_{i} Ext^{\bullet}(\mathscr{V}_i,\mathscr{V}_i)$ accounts for the moment map equations, and for automorphisms and deformations of $\mathscr{V}_i$.
\end{itemize}

As stated above, this virtual tangent bundle comes from a perfect deformation-obstruction theory. This allows one to construct a virtual structure sheaf ${\mathcal{O}}_{\rm{vir}}$ \cite{BF}. The virtual structure sheaf is a K-theoretic analog of the virtual fundamental class in cohomology. It was first proposed by Kontsevich in \cite{10.1007/978-1-4612-4264-2_12}
 and then identified in \cite{BF}. The virtual structure sheaf was used in Y.P. Lee's original approach to quantum K-theory via moduli spaces of stable maps \cite{YPLee}. Later its construction was extended to greater generality in \cite{CFK}.

Having said this, we want to stress that we will only be doing computations with the virtual structure sheaf by using virtual localization formulas, meaning that the provided formula for the virtual tangent bundle is enough for all the computations of this paper.


The symmetrized virtual structure sheaf is defined by:
\beq
\label{virdef}
\hat{\mathcal{O}}_{\rm{vir}}=\mathcal{O}_{\rm{vir}}\otimes {\mathscr{K}^{1/2}_{\textrm{vir}}} q^{\deg(\mathscr{P})/2},
\eeq
where $\mathscr{K}_{\textrm{vir}}={\det}^{-1}T^{\textrm{vir}} \textsf{QM}^{\ard}$ is the virtual canonical bundle and $\mathscr{P}=\mathscr{R}-\bigoplus_{i} Ext^{\bullet}(\mathscr{V}_i,\mathscr{V}_i)$
is the polarization bundle. We do not go into details behind the construction of the square root of the canonical bundle, but yet again address the reader to section 3.2 in \cite{Okounkov:2015aa} for the motivation and section 6.1 for its construction for the space of quasimaps. 

Since we will be using the symmetrized virtual structure sheaf we will need to adjust the standard bilinear form on $K$-theory. That is the reason to for the shift of the bilinear form in (\ref{bilf}).

Finally, all the constructions mentioned above can be generalized to quasimaps nonsingular at a point (by simply restricting sheaves to an open subset), quasimaps relative at a point (see section 6.4 in \cite{Okounkov:2015aa}), as well as any combination of the above conditions to different points. We do not give any formulas for computing virtual structure sheaves for relative conditions, as we will not be explicitly computing any such invariants.

\subsubsection{Gluing Operator} In order to construct the quantum product we need an important element in the
theory of relative quasimaps, namely the gluing operator. As for all operators or enumerative invariants in this paper we will use the following notation for Kahler variables: for a vector $\ard=(d_i)$, $$z^{\ard}=\prod_{i\in I}z_i^{d_i}.$$This is the operator\footnote{In fact, the gluing operator
is a rational function of the quantum parameters $\mathbf{G}\in End (K_{\mathsf{T}}(X))(z)$ and $\mathbf{G}^{-1}$ is also an endomorphism of non-localized $K$ theory (See Section 6.5 in \cite{Okounkov:2015aa})}
$\mathbf{G}\in End (K_{\mathsf{T}}(X))[[z]]$ defined by:
\begin{eqnarray}
&&\mathbf{G}=\sum\limits_{\ard=\overrightarrow{0}}^{\infty} z^\ard ev_{p_1, p_2 *}(\textsf{QM}^{\ard}_{{\rm relative} p_1, p_2  }\hat{\mathcal{O}}_{\rm{vir}})\in K_{\mathsf{T}}^{\otimes 2}(X)[[z]],\\
&&{\rm so ~\ that ~\ the ~\ corresponding ~\ picture ~\ is:} \quad
\begin{tikzpicture}
\draw [ultra thick] (0,0) -- (2,0);
\draw [ultra thick] (0.1,0.2) to [out=210,in=150] (0.1,-0.2);
\draw [ultra thick] (1.9,0.2) to [out=330,in=30] (1.9,-0.2);
\end{tikzpicture}.\nonumber
\end{eqnarray}
It plays an important role in the degeneration formula, see e.g. \cite{Okounkov:2015aa}. Namely, let a smooth curve $\mathcal{C}_{\varepsilon}$ degenerate to a nodal curve:
$$
\mathcal{C}_{0}=\mathcal{C}_{0,1}\cup_p\mathcal{C}_{0,2}.
$$
Here $\mathcal{C}_{0,1}$ and $\mathcal{C}_{0,2}$ are two different components that are glued to each other at point $p$.
The degeneration formula counts quasimaps from $\mathcal{C}_{\varepsilon}$ in terms of relative quasimaps from $\mathcal{C}_{0,1}$ and $\mathcal{C}_{0,2}$, where the relative conditions are imposed at the gluing point $p$. The family of spaces $\textsf{QM}(\mathcal{C}_{\varepsilon}\to X)$ is flat, which means that we can replace curve counts for any $\mathcal{C}_{\varepsilon}$ by $\mathcal{C}_{0}$. In particular, we can replace counts of quasimaps from ${\mathbb{P}}^1$ by a degeneration of it, for example by two copies of ${\mathbb{P}}^1$ glued at a point.

The gluing operator ${\bold{G}} \in \textrm{End} K_{\mathsf{T}}(X)[[z]]$ is the tool that allows us to replace quasimap counts on $\mathcal{C}_{\varepsilon}$ by counts on $\mathcal{C}_{0,1}$ and $\mathcal{C}_{0,2}$, so that the following degeneration formula holds:
$$
\chi(\textsf{QM}(\mathcal{C}_{0}\to X), \hat{\mathcal{O}}_{\rm{vir}}z^{\ard})=\left({\bold{G}}^{-1}\textrm{ev}_{1,*}(\hat{\mathcal{O}}_{\rm{vir}} z^{\ard}),\textrm{ev}_{2,*}(\hat{\mathcal{O}}_{\rm{vir}} z^{\ard})\right).
$$
The corresponding picture interpretation is as follows:
\begin{center}
\begin{tikzpicture}
\draw [ultra thick] (0,0) -- (1.4,0);
\node at (1.7,0) {$=$};
\draw [ultra thick] (2,0) to [out=0,in=240] (2.7,0.4);
\draw [ultra thick] (3,0) to [out=180,in=300] (2.3,0.4);
\node at (3.3,0) {$=$};
\draw [ultra thick] (3.6,0) -- (4.6,0);
\draw [ultra thick] (4.5,0.2) to [out=330,in=30] (4.5,-0.2);
\node at (5.05,0) {${\bold{G}}^{-1}$};
\draw [ultra thick] (5.5,0) -- (6.5,0);
\draw [ultra thick] (0.1+5.5,0.2) to [out=210,in=150] (0.1+5.5,-0.2);
\end{tikzpicture}
\end{center}

\subsection{Quantum $K$-theory Ring}\label{Subsec:QuantumKring}
In this section we define multiplication and important objects of the quantum $K$-theory ring of $X$.

As a vector space quantum $K$-theory ring $QK_{\mathsf{T}}(X)$ is isomorphic to $K_{\mathsf{T}}(X)\otimes\mathbb{C}[[z_{\{i\}}]], \,i\in I$.
\begin{definition}
The element of the quantum $K$-theory
\beq
\hat{\tau}(z)=\sum\limits_{\ard=\overrightarrow{0}}^{\infty} z^\ard  {\rm{ev}}_{p_2, *}\Big(\textsf{QM}^\ard_{{\rm{relative}} \, p_2}, \widehat{{\O}}_{{\rm{vir}}}\tau (\left. \mathscr{V}_i\right|_{p_1}) \Big) \in QK_{\mathsf{T}}(X)
\eeq
is called quantum tautological class corresponding to tensorial polynomial $\tau$ in tautological bundles $\mathcal{V}_i$. In picture notation it will be represented by
\vspace{0.1in}
\begin{center}
\CappedVertexTau
\end{center}
\end{definition}
These classes evaluated at $0$ are equal to the classical tautological classes on $X$ ($\hat{\tau}(0)=\tau$). Note that the definition depends on the tensorial polynomial $\tau$ rather than a class in K theory of $X$.

For any element $\mathcal{F}\in K_{\mathsf{T}}(X)$ the following element
\beq
\sum\limits_{\ard=\overrightarrow{0}}^{\infty}\,z^{\ard} {\text{ev}}_{p_1,p_3\ast}\left({\textsf{QM}}^{\ard}_{p_1,p_2,p_3},\text{ev}^{\ast}_{p_2}({\mathbf{G}}^{-1}\mathcal{F})\widehat{{\O}}_{{\rm{vir}}} \right)\in {K_{\mathsf{T}}(X)}^{\otimes2}[[z]]
\eeq
can be made into an operator from the second copy of ${K_{\mathsf{T}}(X)}$ to the first copy by the bilinear form $(\cdot,\cdot)$ defined above. We define the operator of quantum multiplication by $\mathcal{F}$ to be this operator shifted by ${\mathbf{G}}^{-1}$, i.e
\beq
\label{quantumproduct}
\mathcal{F}\circledast=\sum\limits_{\ard=\overrightarrow{0}}^{\infty}\,z^{\ard} {\text{ev}}_{p_1,p_3\ast}\left({\textsf{QM}}^{\ard}_{p_1,p_2,p_3},\text{ev}^{\ast}_{p_2}({\mathbf{G}}^{-1}\mathcal{F})\widehat{{\O}}_{{\rm{vir}}} \right){\mathbf{G}}^{-1}
\eeq

\begin{definition}
We call $QK_{\mathsf{T}}(X)=K_{\mathsf{T}}(X)[[z]]$ endowed with multiplication (\ref{quantumproduct}), the quantum K-theory ring of $X$. 
\end{definition}

This product enjoys properties similar to the product in quantum cohomology. The proof of the following statement repeats to the proof of the analogous fact for the cotangent bundle to Grassmannian \cite{Pushkar:2016qvw}.

\begin{theorem}
The quantum $K$-theory ring $QK_{\mathsf{T}}(X)$ is a commutative, associative and unital algebra.
\end{theorem}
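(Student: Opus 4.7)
The plan is to follow the standard template for establishing commutativity, associativity, and unitality of a quantum product defined via three-point quasimap invariants, but with the twist that every gluing of base curves introduces a factor of the operator $\mathbf{G}$ which is compensated by the explicit $\mathbf{G}^{-1}$'s in the definition \eqref{quantumproduct}. Throughout we work $\ard$-by-$\ard$ in the formal variables $z^\ard$, and since all constructions are natural in the $\mathsf{T}$-equivariant K-theory, it suffices to establish each identity for the coefficient of a given $z^{\ard}$ on the universal moduli space.

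\textbf{Commutativity.} I would observe that the three-pointed moduli space $\textsf{QM}^\ard_{p_1,p_2,p_3}$ is symmetric in the two outer points: the base $\mathbb{P}^1$ admits an involution fixing $p_2$ and swapping $p_1 \leftrightarrow p_3$, which lifts to an isomorphism of moduli spaces preserving $\widehat{\mathcal{O}}_{\rm vir}$ and the pullback along $\text{ev}_{p_2}$. Combined with the symmetry of the bilinear form $(\cdot,\cdot)$ from \eqref{bilf}, this gives $(\mathcal{F}\circledast\mathcal{G},\mathcal{H}) = (\mathcal{G}\circledast\mathcal{F},\mathcal{H})$ for all $\mathcal{H}$, hence commutativity.

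\textbf{Associativity.} This is the substantive step. I would consider the moduli space of quasimaps from $\mathbb{P}^1$ with four relative points $p_1,p_2,p_3,p_4$ and compute $((\mathcal{F}\circledast\mathcal{G})\circledast\mathcal{H},\mathcal{K})$ in two ways by degenerating the base curve into a nodal curve $\mathcal{C}_0 = \mathcal{C}_{0,1}\cup_p \mathcal{C}_{0,2}$ along two different partitions of the four points: first $\{p_1,p_2\}\sqcup\{p_3,p_4\}$, then $\{p_1,p_4\}\sqcup\{p_2,p_3\}$. Since the family $\textsf{QM}(\mathcal{C}_{\varepsilon}\to X)$ is flat, both degenerations compute the same invariant on the smooth curve. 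The degeneration formula
\[
\chi(\textsf{QM}(\mathcal{C}_0\to X),\widehat{\mathcal{O}}_{\rm vir}z^{\ard}) = (\mathbf{G}^{-1}\text{ev}_{1,*}(\widehat{\mathcal{O}}_{\rm vir}z^{\ard}),\text{ev}_{2,*}(\widehat{\mathcal{O}}_{\rm vir}z^{\ard}))
\]
inserts exactly one factor of $\mathbf{G}^{-1}$ at each node; combined with the $\mathbf{G}^{-1}$ twists already present in \eqref{quantumproduct}, a direct bookkeeping shows that both partitions produce the associativity identity $(\mathcal{F}\circledast\mathcal{G})\circledast\mathcal{H} = \mathcal{F}\circledast(\mathcal{G}\circledast\mathcal{H})$. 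The nontrivial point here is that $\mathbf{G}$ is inserted into the definition of $\circledast$ precisely so that the nodal contributions on the degenerated four-pointed curve reconstruct two nested three-point products without residual $\mathbf{G}$-factors.

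\textbf{Unitality.} For the unit I would take $\mathbf{1} = \mathbf{G}\cdot 1_X \in QK_{\mathsf{T}}(X)$, where $1_X$ is the class of the structure sheaf. Inserting this at $p_2$ in \eqref{quantumproduct} replaces $\text{ev}^*_{p_2}(\mathbf{G}^{-1}\mathbf{1}) = \text{ev}^*_{p_2}(1_X)$, which is the identity. The three-point invariant with the trivial insertion at $p_2$ then reduces, via a standard collapse of the middle point in the quasimap moduli (identifying it with the two-point relative moduli computing $\mathbf{G}$), to the operator $\mathbf{G}$ on $K_{\mathsf{T}}(X)$. The outer $\mathbf{G}^{-1}$ in \eqref{quantumproduct} cancels this, leaving the identity operator, so $\mathbf{1}\circledast\mathcal{G} = \mathcal{G}$.

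The main obstacle is the associativity step: one must verify carefully that in the degeneration of the four-pointed $\mathbb{P}^1$, the symmetrized virtual structure sheaf $\widehat{\mathcal{O}}_{\rm vir}$ behaves correctly (including the $q^{\deg\mathscr{P}/2}$ factor) and that the evaluation pushforwards at the two nodes assemble into the iterated product with exactly the combinatorics of $\mathbf{G}^{-1}$'s prescribed by \eqref{quantumproduct}. Once this bookkeeping is in place, the argument is formally identical to the $T^*\text{Gr}$ case treated in \cite{Pushkar:2016qvw}, and I would simply invoke that analogy to conclude.
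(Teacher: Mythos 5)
The paper itself gives no argument here beyond a citation to \cite{Pushkar:2016qvw}, so your task was effectively to reproduce the content of that reference. Your commutativity and associativity steps do exactly that: the involution swapping the outer marked points for commutativity, and the degeneration of the four-pointed $\mathbb{P}^1$ in two ways with $\mathbf{G}^{-1}$-bookkeeping at the nodes for associativity, is precisely the structure of the standard WDVV-style argument invoked by the paper. These two parts are sound.

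Your unitality step is where the gap lies. You propose $\mathbf{1}=\mathbf{G}\cdot 1_X$ as the unit, whereas the paper's very next Proposition (again citing the same diagrammatic argument from \cite{Pushkar:2016qvw}) identifies the unit as the quantum tautological class $\hat{\bold{1}}(z)$, i.e.\ $\sum_\ard z^\ard\,\mathrm{ev}_{p_2,*}\bigl(\textsf{QM}^\ard_{\mathrm{relative}\,p_2},\widehat{\O}_{\mathrm{vir}}\bigr)$. These are not obviously the same element: $\hat{\bold{1}}(z)$ is a pushforward from the one-relative-point moduli with an absolute marked point carrying trivial insertion, while $\mathbf{G}\cdot 1_X$ is computed on the two-relative-point moduli and carries an extra $\mathrm{ev}_{p_1}^*(K_X^{-1/2})$ from contracting the bilinear form against $1_X$. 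If the two expressions do agree, that itself requires a nontrivial argument. The load-bearing sentence in your proof -- that the three-point invariant with a trivial insertion at $p_2$ ``reduces, via a standard collapse of the middle point,'' to the two-point relative invariant $\mathbf{G}$ -- is asserted without justification. A relative point in the quasimap moduli is not a marked point that can be freely forgotten: the relative condition changes the moduli space (bubbling, boundary strata, the virtual class), and there is no string/dilaton-type equation quoted in this paper that would make the collapse automatic, especially with the $q^{\deg\mathscr{P}/2}$ twist in $\widehat{\O}_{\mathrm{vir}}$. You should either prove the collapse directly, or instead run the degeneration argument that the paper's next Proposition suggests: insert $\hat{\bold{1}}(z)$ at $p_2$, degenerate off the component carrying $p_2$ and the node, use the picture identity
\begin{center}
\begin{tikzpicture}
\draw [ultra thick] (0,0) -- (1.4,0);
\node at (1.7,0) {$=$};
\draw [ultra thick] (3.6,0) -- (4.6,0);
\draw [ultra thick] (4.5,0.2) to [out=330,in=30] (4.5,-0.2);
\node at (5.05,0) {${\bold{G}}^{-1}$};
\draw [ultra thick] (5.5,0) -- (6.5,0);
\draw [ultra thick] (0.1+5.5,0.2) to [out=210,in=150] (0.1+5.5,-0.2);
\end{tikzpicture}
\end{center}
to split the capped vertex $\hat{\bold{1}}(z)$ against $\mathbf{G}^{-1}$, and verify directly that the result is the identity operator.
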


\noindent
\textbf{Important Assumptions:} From now on we assume that the fixed points set $X^{\mathsf{T}}$ is finite. The classes of fixed points 
are eigenvectors of classical multiplication in $K_{\mathsf{T}}(X)$.
We assume, in addition, that for any two fixed points there exists a line bundle $\mathcal{L}$ for which the corresponding two eigenvalues are distinct. This is indeed the case for our main example in this paper, namely cotangent bundles for partial flag varieties.

After quantum deformation, the eigenvalues of quantum multiplication by $\mathcal{L}$ become power series in the K\"ahler parameters $z$, with the first term given by the classical eigenvalue,
see Lemma \ref{lemmpowe} below. Thus, the eigenvalues remain distinct in a small neighborhood of zero $|z|\ll 1$. Therefore, our assumptions guarantee that the quantum K-theory ring remains diagonalizable in a perhaps deformed basis.  

\vspace{2mm}

\noindent {\bf Remark.}
In general, the situation of degenerate eigenvalues is unavoidable, with Hilbert scheme of $k$ points the complex plane $X=\mathrm{Hilb}^{k}(\mathbb{C}^2)$ as an example. Its Picard group is generated by a single element $\mathscr{O}(1)$ and the corresponding eigenvalues appear with multiplicities. 

\vspace{2mm}

The operators of quantum multiplication by the \textbf{quantum} tautological bundles obey the most natural properties.   
First, given Kirwan's K-theoretic surjectivity theorem, we have the following result.

\begin{proposition}\label{generators}
Quantum tautological classes generate the quantum equivariant  $K$-theory over the quantum equivariant  $K$-theory of a point
$QK_{\mathsf{T}}(\cdot)=\mathbb{C}[a_{m}^{\pm 1}][[z_i]]$ where $a_m$ for $m=1\cdots \dim \mathsf{T}$ are the equivariant parameters of $\mathsf{T}$.
\end{proposition}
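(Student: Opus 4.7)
The plan is a $(z)$-adic deformation (Nakayama-type) argument that reduces the claim to Kirwan's K-theoretic surjectivity at $z=0$, which the proposition explicitly takes as input. Set $R := \mathbb{C}[a_m^{\pm 1}][[z_i]]$ with maximal ideal $\mathfrak{m} := (z_i)_{i \in I}$, so that $QK_{\mathsf{T}}(X) = K_{\mathsf{T}}(X)[[z_i]]$ is complete with respect to the $\mathfrak{m}$-adic topology. The definition of quantum tautological classes and of the quantum product as generating functions over quasimap degrees $\ard$ weighted by $z^\ard$ gives two classical-limit congruences. First, $\hat{\tau}(z) \equiv \tau \pmod{\mathfrak{m}}$ for every tautological $\tau$ (the observation $\hat{\tau}(0)=\tau$ recorded immediately after the definition of the quantum tautological class). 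Second, $\mathcal{F}\circledast\mathcal{G} \equiv \mathcal{F}\otimes\mathcal{G} \pmod{\mathfrak{m}}$ for every $\mathcal{F},\mathcal{G}\in K_{\mathsf{T}}(X)$, since the degree-zero contribution to the gluing operator $\mathbf{G}$ is the identity and the $\ard=\overrightarrow{0}$ moduli space of quasimaps with three special points is $X$ itself.

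By Kirwan surjectivity, a finite set of tautological classes $\tau_1,\dots,\tau_k$ (including their duals) generates $K_{\mathsf{T}}(X)$ as a $\mathbb{C}[a_m^{\pm 1}]$-algebra, and hence the classical monomials $\tau^\alpha := \tau_1^{\alpha_1}\cdots\tau_k^{\alpha_k}$ span $K_{\mathsf{T}}(X)$ as a module over $\mathbb{C}[a_m^{\pm 1}]$. Write $\hat{\tau}^{\circledast\alpha}(z) := \hat{\tau}_1(z)^{\circledast \alpha_1}\circledast\cdots\circledast \hat{\tau}_k(z)^{\circledast \alpha_k}$, and let $A \subseteq QK_{\mathsf{T}}(X)$ be the $R$-subalgebra generated under $\circledast$ by the $\hat{\tau}_i(z)$. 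I would then aim to show $A = QK_{\mathsf{T}}(X)$ by expressing an arbitrary $\mathcal{F}(z)\in QK_{\mathsf{T}}(X)$ as $\sum_\alpha c_\alpha(z)\,\hat{\tau}^{\circledast\alpha}(z)$ with $c_\alpha(z)\in R$.

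The coefficients $c_\alpha(z) = \sum_d c_{\alpha,d} z^d$ would be constructed order by order in the $\mathfrak{m}$-adic filtration. For the base case, Kirwan surjectivity produces $c_{\alpha,0}\in\mathbb{C}[a_m^{\pm 1}]$ with $\mathcal{F}(0)=\sum_\alpha c_{\alpha,0}\,\tau^\alpha$, and the two congruences above then give $\mathcal{F}(z)\equiv \sum_\alpha c_{\alpha,0}\,\hat{\tau}^{\circledast\alpha}(z)\pmod{\mathfrak{m}}$. For the inductive step, assume the $c_{\alpha,d}$ have been chosen for all $|d|<N$ so that the identity holds modulo $\mathfrak{m}^N$; the residue of $\mathcal{F}-\sum_\alpha c_\alpha\,\hat{\tau}^{\circledast\alpha}$ in $K_{\mathsf{T}}(X)\otimes(\mathfrak{m}^N/\mathfrak{m}^{N+1})$ can then, coefficient by coefficient in the monomials $z^d$ with $|d|=N$, be written as a $\mathbb{C}[a_m^{\pm 1}]$-combination of the $\tau^\alpha$ by another application of Kirwan surjectivity, which fixes the $c_{\alpha,d}$ for $|d|=N$. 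The assembled series $c_\alpha(z)\in R$ is well-defined by the $\mathfrak{m}$-adic completeness of $R$, concluding the argument.

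I expect no substantive obstacle beyond the invocation of Kirwan's surjectivity conjecture itself: the induction is entirely formal and only uses the two classical-limit statements together with the fact that $K_{\mathsf{T}}(X)$ is spanned over $\mathbb{C}[a_m^{\pm 1}]$ by classical tautological monomials. The essential content of the proposition is therefore the transfer of the classical generation statement to the quantum setting through $(z)$-adic completion.
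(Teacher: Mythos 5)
Your proof is correct and takes essentially the same approach as the paper: the paper's one-line proof invokes Nakayama's Lemma, and you have simply unpacked that invocation into the explicit $(z)$-adic order-by-order lifting argument, using $\hat{\tau}(0)=\tau$ and the fact that $\circledast$ reduces to $\otimes$ modulo the ideal $(z_i)$.
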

\begin{proof}
Since, by Kirwan's $K$-theoretic surjectivity theorem, classical K-theory is generated by tautological classes, the quantum $K$-theory will be generated by quantum tautological classes according to Nakayama's Lemma.
\end{proof}

Second, in contrast with quantum cohomology, the multiplicative identity of the quantum $K$-theory ring does not always coincide with the multiplicative identity of classical $K$-theory (i.e. the structure sheaf ${{\mathcal{O}}}_{X}$):
\begin{proposition}
The multiplicative identity of $QK_{\mathsf{T}}(X)$ is given by $\hat{\bold{1}}(z)$ (i.e. the quantum tautological class for insertion $\tau=1$).   
\end{proposition}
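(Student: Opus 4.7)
The plan is to reduce the claim to the degeneration formula of \S\ref{PNVSGO} and the defining property of the gluing operator $\mathbf{G}$. By the very definition of $\circledast$, the proposition is equivalent to the operator identity
\[
\sum_{\mathbf{d}} z^{\mathbf{d}}\,\mathrm{ev}_{p_1,p_3,*}\!\left(\textsf{QM}^{\mathbf{d}}_{p_1,p_2,p_3},\;\mathrm{ev}_{p_2}^{*}\bigl(\mathbf{G}^{-1}\hat{\bold{1}}(z)\bigr)\,\widehat{\mathcal{O}}_{\mathrm{vir}}\right)\;=\;\mathbf{G},
\]
where the left-hand side is interpreted as an operator on $K_{\mathsf{T}}(X)$ via the bilinear form; the trailing $\mathbf{G}^{-1}$ in the definition of $\circledast$ then yields $\mathbf{G}\circ\mathbf{G}^{-1}=\mathrm{id}$, as required. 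Note that $\mathbf{G}=\mathrm{id}+O(z)$ is invertible in $\mathrm{End}\,K_{\mathsf{T}}(X)[[z]]$, so the formal inverse makes sense.

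I would then prove this identity by applying the degeneration formula to the domain $\mathbb{P}^1$, bubbling $p_2$ off from the pair $p_1,p_3$ and replacing the smooth three-pointed $\mathbb{P}^1$ by a nodal curve $\mathcal{C}_A\cup_p\mathcal{C}_B$. This rewrites the left-hand side as a $\mathbf{G}^{-1}$-pairing at the node between a relative invariant on $\mathcal{C}_A$ (carrying $p_1,p_3$ and relative at the node) and a bubble contribution on $\mathcal{C}_B$ carrying the insertion $\mathbf{G}^{-1}\hat{\bold{1}}(z)$ at $p_2$. The crucial observation is that $\hat{\bold{1}}(z)$ is itself the generating function of relative quasimap invariants with a single relative point, so inserting $\hat{\bold{1}}(z)$ at the absolute point $p_2$ of $\mathcal{C}_B$ amounts, via the projection formula and a second application of the degeneration formula read in reverse, to grafting a further capped bubble at $p_2$. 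The explicit prefactor $\mathbf{G}^{-1}$ is precisely what this second degeneration absorbs; after the cancellation $\mathbf{G}^{-1}\cdot\mathbf{G}=\mathrm{id}$, the $\mathcal{C}_B$-contribution simplifies to the identity at the node, and reassembly via the degeneration formula leaves the two-point relative invariant on $\mathbb{P}^1$, which is by definition $\mathbf{G}$.

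The main obstacle will be the rigorous justification of this iterated degeneration: I must match degree splittings $\mathbf{d}=\mathbf{d}_A+\mathbf{d}_B+\cdots$ across successive bubbles, verify compatibility of the cancellation with the symmetrized virtual structure sheaf $\widehat{\mathcal{O}}_{\mathrm{vir}}$ and with the canonical square root of the virtual canonical bundle used throughout, and confirm that the heuristic cancellation of the capped insertion against the leading $\mathbf{G}^{-1}$ propagates correctly through all push-forwards on moduli. The parallel argument for the cotangent bundle of the Grassmannian ($\mathfrak{g}=\mathfrak{sl}_2$) is carried out in detail in \cite{Pushkar:2016qvw}, and the general Nakajima quiver case follows by the same pattern, relying on the existence and degeneration properties of the relative quasimap moduli spaces established in \cite{Okounkov:2015aa}.
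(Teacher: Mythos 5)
Your proposal is correct and is essentially the same route the paper takes: the paper's proof is a one-line citation of the diagrammatic degeneration-formula argument from \cite{Pushkar:2016qvw}, asserted to carry over verbatim to arbitrary Nakajima quiver varieties, and what you have done is write out that picture manipulation in prose, reducing the claim to $T(\hat{\bold{1}}(z))=\mathbf{G}$ and then using the gluing formula so that the trailing $\mathbf{G}^{-1}$ cancels.
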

\begin{proof}
The diagrammatic proof given in \cite{Pushkar:2016qvw} can be applied to any Nakajima quiver variety.
\end{proof}

\subsection{Vertex functions}\label{Subsec:Vertex}
The spaces $\textsf{QM}^{\ard}_{\rm{nonsing}\, p_2}$ and $\textsf{QM}^{\ard}_{\rm{relative}\, p_2}$ admit an action of an extra torus $\mathbb{C}^{\times}_q$ which scales the original $\mathbb{P}^1$ keeping points $p_1$ and $p_2$ fixed. Set $\mathsf{T}_q=\mathsf{T}\times \mathbb{C}^{\times}_q$ be the torus acting on these spaces.
\begin{definition}
The element
$$
V^{(\tau)}(z)=\sum\limits_{\ard=\vec{0}}^{\infty} z^{\ard} {\rm{ev}}_{p_2, *}\Big(\textsf{QM}^{\ard}_{{\rm{nonsing}} \, p_2},\widehat{{\O}}_{{\rm{vir}}} \tau (\left.\mathscr{V}_i\right|_{p_1}) \Big) \in  K_{\mathsf{T}_q}(X)_{loc}[[z]]
$$
is called bare vertex with descendent $\tau$.
In picture notation it will be denoted by
\vspace{0.1in}
\begin{center}
\VertexTau
\end{center}
\end{definition}
The space $\textsf{QM}^{\ard}_{\rm{nonsing}\, p_2}$ is not proper (the condition of non-singularity at a point is an open condition), but the set of $\mathsf{T}_q$-fixed points is, hence the bare vertex is singular at $q=1$.

\begin{definition}
The element
$$
\hat{V}^{(\tau)}(z)=\sum\limits_{\ard=\vec{0}}^{\infty} z^{\ard} {\rm{ev}}_{p_2, *}\Big(\textsf{QM}^{\ard}_{{\rm{relative}} \, p_2},\widehat{{\O}}_{{\rm{vir}}} \tau (\left.\mathscr{V}_i\right|_{p_1}) \Big) \in  K_{\mathsf{T}_q}(X)[[z]]
$$
is called capped vertex with descendent $\tau$. In picture notation it will be represented by:
\vspace{0.1in}
\begin{center}
\CappedVertexTau
\end{center}
\end{definition}
Note here, that the definition of the capped vertex and the definition of quantum tautological classes are very similar with the main difference being the spaces they live in. By definition, the quantum tautological classes can be obtained by taking a limit of the capped vertex: $\lim_{q\to 1}\hat{V}^{(\tau)}(z)=\hat{\tau}(z)$. The last limit exists as the coefficients of $\hat{V}^{(\tau)}(z)$ 
are Laurent polynomials in $q$, due to the properness of the evaluation map in the relative case. 

In fact, the following strong statement is known about capped vertex functions.
\begin{theorem}
Power series $\hat{V}^{(\tau)}(z)$ is a Taylor expansion of a rational function in quantum parameters $z$.
\end{theorem}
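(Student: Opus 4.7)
The plan is to combine two geometric inputs: a degeneration formula that factors the capped vertex through the capping operator, and the quantum $q$-difference equations in the K\"ahler parameters $z$ that this capping operator satisfies.

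First, I would use the degeneration formula of Subsection \ref{PNVSGO}. Degenerating the base $\mathbb{P}^1$ (with the nonsingular point $p_1$ and the relative point $p_2$) into a nodal union $\mathcal{C}_{0,1} \cup_{p} \mathcal{C}_{0,2}$, with the descendent insertion placed on $\mathcal{C}_{0,1}$ at $p_1$ and the relative boundary on $\mathcal{C}_{0,2}$ at $p_2$, one obtains a factorization
\[
\hat V^{(\tau)}(z) \;=\; \boldsymbol{\Psi}(z)\, V^{(\tau)}(z),
\]
where $\boldsymbol{\Psi}(z)\in \mathrm{End}\,K_{\mathsf{T}_q}(X)[[z]]$ is the capping operator, obtained by gluing a "cap" (a $\mathbb{P}^1$ with a nonsingular point at the node and a relative point at $p_2$) onto the bare vertex via ${\bold{G}}^{-1}$. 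This reduces the rationality of $\hat V^{(\tau)}(z)$ to the rationality of $\boldsymbol{\Psi}(z)$ together with the known hypergeometric structure of $V^{(\tau)}(z)$.

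Next, I would establish that $\boldsymbol{\Psi}(z)$ satisfies a compatible system of $q$-difference equations in the variables $z_i$. The construction is geometric: tensoring the bundles $\mathscr{V}_i$ on $\mathcal{C}$ with line bundles of varying degree on $\mathbb{P}^1$ shifts $z_i \mapsto q z_i$, and comparing the resulting quasimap counts to the original via an appropriately modified gluing produces shift operators $\mathbf{M}_i(z)$ with
\[
\boldsymbol{\Psi}(q^{\delta_{ij}} z_j)_{j} \;=\; \mathbf{M}_i(z)\, \boldsymbol{\Psi}(z),
\]
and these operators $\mathbf{M}_i(z)$ are rational in $z$ (they are built from quantum $R$-matrices of the associated quantum loop algebra acting on $K_{\mathsf{T}}(X)$). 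Since $\boldsymbol{\Psi}(z)$ is a formal series with $\boldsymbol{\Psi}(0) = \mathrm{Id}$, the difference equation determines it recursively, and the rationality of the $\mathbf{M}_i(z)$ together with the fact that the coefficients of $\hat V^{(\tau)}(z)$ are Laurent polynomials in $q$ (by properness of $\widetilde{\mathrm{ev}}_{p_2}$) forces $\boldsymbol{\Psi}(z)\,V^{(\tau)}(z)$ to be a rational function of $z$: its poles are confined to a finite set of hyperplanes dictated by the shift operators, and its growth is controlled by the hypergeometric bare vertex.

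The main obstacle will be step two: constructing the shift operators $\mathbf{M}_i(z)$ rigorously from the quasimap geometry (twisting by line bundles on the domain curve) and proving that their matrix entries are genuinely rational rather than merely meromorphic with infinitely many poles. This is where one needs to invoke the stable envelope / $R$-matrix construction, which provides an explicit rational expression for the shift operators in terms of the quantum loop algebra action on $K_{\mathsf{T}}(X)$ reviewed in Subsection \ref{ClassK}. Once the rationality of $\mathbf{M}_i(z)$ is established, the rest is a standard argument: a formal power series solution to a $q$-difference equation with rational coefficients, regular at $z=0$ and with polynomially bounded coefficients, is itself rational in $z$.
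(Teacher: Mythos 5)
Your proposal takes a genuinely different route than the paper, which simply cites two known proofs: one via the \emph{large framing vanishing} theorem of \cite{2016arXiv161201048S}, and one via explicit Mellin--Barnes integral representations from \cite{Aganagic:2017be}. The decomposition $\hat{V}^{(\tau)}(z)=\Psi(z)\,V^{(\tau)}(z)$ and the quantum $q$-difference equation $\Psi(q^{\mathcal{L}_i}z)=\mathbf{M}_{\mathcal{L}_i}(z)\Psi(z)\mathcal{L}_i^{-1}$ that you invoke are indeed both correct and appear elsewhere in the paper (Theorem 2.10), and the observation that the coefficients of $\hat{V}^{(\tau)}(z)$ are Laurent polynomials in $q$ by properness is also used in the paper (just below the theorem, to justify the $q\to 1$ limit).

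However, the final step of your argument contains a genuine gap. The claim that \emph{``a formal power series solution to a $q$-difference equation with rational coefficients, regular at $z=0$ and with polynomially bounded coefficients, is itself rational in $z$''} is not a standard fact, and as stated it is false: fundamental solutions of $q$-difference systems with rational coefficients are generically built from infinite $q$-Pochhammer products, hence $q$-hypergeometric rather than rational. Indeed the capping operator $\Psi(z)$ is itself not a rational function of $z$ (it is a fundamental matrix solution, essentially a matrix $q$-Gamma function), and the bare vertex $V^{(\tau)}(z)$ is a $q$-hypergeometric series, also not rational. It is only their product that turns out to be rational, and the cancellation of the transcendental parts is exactly the content of the theorem; you cannot extract it from the $q$-difference equation and the Laurent-in-$q$ property alone. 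To close the gap you would need either the geometric input of large framing vanishing (which shows $\hat{V}^{(\tau)}$ is a polynomial in $z$ for large framing, then descends to general framing via stable envelopes), or the explicit contour integral representation of the capped vertex from which rationality is read off directly. Your structural observations correctly locate the relevant machinery, but the concluding ``standard argument'' is doing all the work and does not hold up.
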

\begin{proof}
There are two different proofs of this theorem: the first is based on large framing vanishing 
\cite{2016arXiv161201048S}, the second originates from integral representations 
of solutions of quantum difference equations \cite{Aganagic:2017be}. 
\end{proof}
As a corollary, quantum tautological classes $\hat{\tau}(z)$ are rational functions of $z$.

\subsection{Capping Operator and Difference Equation}\label{Subsec:COandDE}
The operator which relates capped and bare vertices, is known as capping operator and is defined as the following class in the localized K-theory:
\begin{equation}
\Psi(z)=\sum\limits_{\ard=0}^{\infty}\, z^{\ard} {\rm ev}_{p_1,p_2,*}
\Big( {\it \textsf{QM}}^{\ard}_{{{\rm{relative}\, p_1}} \atop {{\rm{nonsing}\, p_2}}},
\widehat{{\O}}_{{\rm{vir}}} \Big)
\in K^{\otimes 2}_{\mathsf{T}_q}(X)_{loc}[[z]].
\end{equation}
Bilinear form makes it an operator acting from the second to the first copy of $K_{\mathsf{T}_q}(X)_{loc}[[z]]$.
This operator satisfies the quantum difference equations.  We summarize that in the Theorem below \cite{Okounkov:2015aa}.
\begin{theorem}
1)The capped vertex with descendent $\tau$ is a result of applying of the capping operator to the bare vertex
\begin{equation}
\label{verrel}
\hat{V}^{(\tau)}(z) = \Psi(z) {V}^{(\tau)}(z).
\end{equation}
his equation can be represented by the following picture notation:
\vspace{0.2in}
\begin{center}
\begin{tikzpicture}
\draw [ultra thick] (0,0) -- (2,0);
\draw [fill] (2,0) circle [radius=0.1];
\draw [ultra thick] (0.1,0.2) to [out=210,in=150] (0.1,-0.2);
\node at (2.3,0) {$\tau$};
\node at (2.9,0) {$=$};
\draw [ultra thick] (3.4,0) -- (4.4,0);
\draw [ultra thick] (0.1+3.4,0.2) to [out=210,in=150] (0.1+3.4,-0.2);
\draw [very thick] (4.53,0) circle [radius=0.13];
\draw [very thick] (5,0) circle [radius=0.13];
\draw [ultra thick] (5.13,0) -- (6.13,0);
\draw [fill] (6.13,0) circle [radius=0.1];
\node at (6.43,0) {$\tau$};
\end{tikzpicture}
\end{center}

\noindent 2)
The capping operator $\Psi(z)$ is the fundamental solution of the quantum difference equation:
\begin{equation}
\label{difference}
\Psi(q^{\mathcal{L}_i}z)=\mathbf{M}_{\mathcal{L}_i}(z) \Psi(z) \mathcal{L}_i^{-1},
\end{equation}
where $\mathcal{L}_i=\det(V_i)$,
$\mathcal{L}$ is the operator of classical multiplication by the corresponding line bundle and $(q^{\mathcal{L}}z)^{\ard}=q^{\langle\mathcal{L}, \ard\rangle}z^{\ard}$, where $\ard\in H_2(X, \mathbb{Z}), \mathcal{L}\in Pic(X)$.
The matrix $\mathbf{M}_{\mathcal{L}_i}(z)$ is
\begin{equation}
\label{qdeop}
\mathbf{M}_{\mathcal{L}_i}(z)=\sum\limits_{\ard=\vec{0}}^{\infty} z^{\ard} {\rm{ev}}_{\ast}\left(\textsf{QM}^{\ard}_{{\rm{relative}}\, p_1,p_2},\widehat{{\O}}_{{\rm{vir}}}\det H^{\bullet}\left(\mathscr{V}_i\otimes \pi^{\ast}(\mathcal{O}_{p_1})\right)\right)\bold{G}^{-1},
\end{equation}
where $\pi$ is a projection from $\mathcal{C}\to {\mathbb{P}^{1}}$ as in definition \ref{quasimap} and  $\mathcal{O}_{p_1}$ is a class of
point $p_1\in {\mathbb{P}^{1}}$.

\end{theorem}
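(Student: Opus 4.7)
The plan is to establish both parts via the degeneration formula for quasimap invariants, combined with a careful analysis of how the $\mathbb{C}^\times_q$-scaling of $\mathbb{P}^1$ interacts with degree twists.

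For part (1), $\hat V^{(\tau)}(z)$ is a quasimap count on $\mathbb{P}^1$ with a relative condition at $p_2$ and the descendent insertion $\tau(\mathscr{V}_i|_{p_1})$. I would degenerate this $\mathbb{P}^1$ into a chain of two rational curves $\mathcal{C}_L \cup_{p_0} \mathcal{C}_R$, placing $p_1$ on $\mathcal{C}_L$ and $p_2$ on $\mathcal{C}_R$. Flatness of the family of quasimap moduli spaces lets me replace the count on the smooth $\mathbb{P}^1$ by the nodal count. Imposing nonsingularity of the quasimap on both sides of the node $p_0$ (so that no $\mathbf{G}^{-1}$ correction appears in the degeneration formula), the component $\mathcal{C}_L$ contributes quasimaps with insertion at $p_1$ and nonsingular at $p_0$---this is exactly the bare vertex $V^{(\tau)}(z)$ evaluated at $p_0$---while $\mathcal{C}_R$ contributes quasimaps with nonsingular at $p_0$ and relative at $p_2$, which is precisely the capping operator $\Psi(z)$. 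The K-theoretic pairing at $p_0$ then assembles the two factors into $\Psi(z) V^{(\tau)}(z)$, yielding \eqref{verrel}.

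For part (2), the strategy is to analyze how the substitution $z \mapsto q^{\mathcal{L}_i} z$ acts on the degree-$\ard$ term in $\Psi(z)$. Since $(q^{\mathcal{L}_i} z)^\ard = q^{\langle \mathcal{L}_i, \ard\rangle} z^\ard$ and $\langle \mathcal{L}_i, \ard\rangle = \deg \det \mathscr{V}_i$, I would interpret the scalar factor $q^{\deg \det \mathscr{V}_i}$ K-theoretically, using a virtual Riemann--Roch / localization argument, as a product of contributions localized at the two special points of $\mathbb{P}^1$: an insertion of $\det H^\bullet(\mathscr{V}_i \otimes \pi^*\mathcal{O}_{p_1})$ at the relative endpoint $p_1$, and an insertion of $\mathcal{L}_i^{-1}$ at the nonsingular endpoint $p_2$. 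The former insertion, placed at the relative endpoint, promotes the capping operator into the two-point relative moduli space that defines $\mathbf{M}_{\mathcal{L}_i}(z)$ in \eqref{qdeop}; the bubbling-off degeneration producing this two-point space generates a gluing factor exactly canceled by the $\mathbf{G}^{-1}$ built into the definition of $\mathbf{M}_{\mathcal{L}_i}$. The insertion of $\mathcal{L}_i^{-1}$ at the nonsingular endpoint $p_2$ becomes right multiplication by $\mathcal{L}_i^{-1}$ on the operator $\Psi(z)$, yielding \eqref{difference}.

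The main obstacle is the Riemann--Roch identification in part (2): one must verify that the formal shift $q^{\langle \mathcal{L}_i, \ard\rangle}$ at the level of generating functions corresponds, after accounting for the symmetrized virtual structure sheaf $\widehat{\mathcal{O}}_{\rm{vir}}$, to precisely the insertions $\det H^\bullet(\mathscr{V}_i \otimes \pi^*\mathcal{O}_{p_1})$ at $p_1$ and $\mathcal{L}_i^{-1}$ at $p_2$. In particular, the polarization-dependent factor $q^{\deg \mathscr{P}/2}$ entering $\widehat{\mathcal{O}}_{\rm{vir}}$ must be tracked through the degeneration and shown to combine correctly with the endpoint contributions. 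Once this identification is in place, the remainder of the argument is a standard application of the degeneration formula and the definitions.
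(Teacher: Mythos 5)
The paper does not actually prove this theorem; it cites it directly from [Okounkov:2015aa] (the sentence preceding the theorem reads ``We summarize that in the Theorem below \cite{Okounkov:2015aa}''). So there is no proof in the paper against which to compare literally, and I will assess the proposal on its own merits.

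For part (1) the high-level idea --- factorize $\textsf{QM}^{\ard}_{\rm rel\,p_2}(\mathbb{P}^1)$ via a degeneration of the source curve --- is the right flavor, but the proposal as written has a concrete gap in the gluing step. You assert that ``imposing nonsingularity of the quasimap on both sides of the node $p_0$'' avoids the $\mathbf{G}^{-1}$ correction, but this is not a valid form of the degeneration formula: the version stated in Section~\ref{PNVSGO} places \emph{relative} conditions on both sides of the node with a $\mathbf{G}^{-1}$ insertion, and the version without $\mathbf{G}^{-1}$ pairs a \emph{relative} condition (proper evaluation) on one side with a \emph{nonsingular} one on the other. Two nonsingular evaluations at the node have non-proper evaluation maps on both sides, so their pairing is only meaningful after $\mathbb{C}^\times_q$-localization, which you never invoke --- and indeed the theorem is an identity in $K_{\mathsf{T}_q}(X)_{loc}$, so localization has to enter somewhere. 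Relatedly, if the degeneration is to be $\mathbb{C}^\times_q$-equivariant (which it must be for both sides of \eqref{verrel} to be $\mathbb{C}^\times_q$-equivariant classes), the $q$-weights of the tangent lines at $p_0$ on $\mathcal{C}_L$ and $\mathcal{C}_R$ are forced to be mutually inverse; this weight bookkeeping is precisely what makes $\mathcal{C}_R$'s contribution match $\Psi$'s convention, and it is omitted. Finally, the quasimaps on $\mathcal{C}_0$ that are singular at the node (and are resolved by bubbling) are not addressed by your fiber product over $X$; these are exactly the contributions that a careful $\mathbb{C}^\times_q$-localization on $\textsf{QM}^{\ard}_{\rm rel\,p_2}$ organizes into the bubble-chain factor defining $\Psi$.

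For part (2) you yourself flag the Riemann--Roch identification of the degree-shift $q^{\langle\mathcal{L}_i,\ard\rangle}$ with the two endpoint insertions $\det H^\bullet(\mathscr{V}_i\otimes\pi^*\mathcal{O}_{p_1})$ and $\mathcal{L}_i^{-1}$, together with the tracking of the $q^{\deg\mathscr{P}/2}$ twist in $\widehat{\mathcal{O}}_{\rm vir}$, as ``the main obstacle'' --- and you do not resolve it. As written this is an outline of a strategy, not a proof; the derivation of the explicit operator $\mathbf{M}_{\mathcal{L}_i}$ of \eqref{qdeop} from the formal shift is exactly the technical heart of the argument in [Okounkov:2015aa, Section 8] and cannot be summarized away. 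Both parts therefore need substantial additional work before they constitute a proof.
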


\noindent {\bf Remark.} The explicit form of operator $\mathbf{M}_{\mathcal{L}_i}$ is known for arbitrary Nakajima variety.
It is constructed in terms of representation theory terms of quantum loop algebra associated with a quiver  \cite{Okounkov:2016sya}.\\

Operators $\mathbf{M}_{\mathcal{L}_i}(z)$ turn out to be closely related to quantum tautological line bundles as the following Theorem suggests, which is a direct generalization of Theorem 10 of \cite{Pushkar:2016qvw}.

\begin{theorem}
In the limit $q\to 1$ operators $\mathbf{M}_{\mathcal{L_i}}(z)$ coincide with the operators of quantum multiplication on the corresponding quantum tautological bundles:
\begin{equation}
\lim_{q\to 1}\mathbf{M}_{\mathcal{L}_i}(z)=\widehat{\mathcal{L}}_i(z).
\end{equation}
\end{theorem}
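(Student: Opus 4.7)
The plan is to use the degeneration formula for quasimap moduli spaces from \cite{Okounkov:2015aa} to rewrite $\mathbf{M}_{\mathcal{L}_i}(z)$ into a form that, in the limit $q\to 1$, manifestly coincides with the operator of quantum multiplication by $\widehat{\mathcal{L}}_i(z)$. The first step is to identify the descendent class geometrically: since $\mathcal{O}_{p_1}$ is the skyscraper sheaf at $p_1\in\mathbb{P}^1$, the pullback $\pi^\ast\mathcal{O}_{p_1}$ to the (possibly destabilized) domain curve $\mathcal{C}$ is supported at the preimage of $p_1$, so $H^\bullet(\mathscr{V}_i\otimes\pi^\ast\mathcal{O}_{p_1})$ is the fiber of $\mathscr{V}_i$ at this preimage and its determinant equals $\mathcal{L}_i(\mathscr{V}_i|_{p_1})$---precisely the descendent used to define $\widehat{\mathcal{L}}_i(z)$. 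This reduces the claim to relating two generating functions that differ only in where the same descendent is inserted.

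Next, I would apply the gluing/degeneration formula to split the domain $\mathbb{P}^1$ of $\mathbf{M}_{\mathcal{L}_i}(z)$ at an interior node $p_\ast$. One component carries $p_1$ (relative, with the $\mathcal{L}_i$ descendent) and $p_\ast$ (relative); the other carries $p_\ast$ (relative) and $p_2$ (relative). The gluing at $p_\ast$ contributes a factor $\mathbf{G}^{-1}$. Matched with the trailing $\mathbf{G}^{-1}$ in \eqref{qdeop}, and using that the right-hand component is by definition the generating function $\mathbf{G}$, the expression reshapes into a combination of the capped vertex $\hat{V}^{(\mathcal{L}_i)}(z)$ and a single power of $\mathbf{G}^{-1}$. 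A further bubbling off at $p_1$ identifies the descendent-carrying component with the capped vertex itself.

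Then I would take the limit $q\to 1$. Since the relative evaluation map is proper, the coefficients of $\hat{V}^{(\mathcal{L}_i)}(z)$ are Laurent polynomials in $q$ and $\hat{V}^{(\mathcal{L}_i)}(z)|_{q=1}=\widehat{\mathcal{L}}_i(z)$. In this limit the $\mathbb{C}^\times_q$-scaling of $\mathbb{P}^1$ degenerates, so the position of an interior nonsingular/marked point on the base is irrelevant; in particular, the descendent can be freely transported from the relative endpoint $p_1$ to an interior marked point. This lets one reinterpret the resulting expression as the three-point generating function on $\mathbb{P}^1$ with two relative endpoints and an interior marked point carrying the insertion $\mathbf{G}^{-1}\widehat{\mathcal{L}}_i(z)$---exactly the definition \eqref{quantumproduct} of the quantum multiplication operator $\widehat{\mathcal{L}}_i(z)\circledast$.

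The hard part will be the bookkeeping of bubbling at the relative point $p_1$ where the descendent sits: in a destabilization, chains of non-rigid $\mathbb{P}^1$'s can form between $p_1$ and the true section, and $\det H^\bullet(\mathscr{V}_i\otimes\pi^\ast\mathcal{O}_{p_1})$ collects contributions along the entire chain. One must verify that these contributions reorganize consistently with the bubble structure of the capped vertex, with compatible choices of the square root of the canonical bundle, the polarization, and the symmetrized virtual structure sheaf across all nodes---an ingredient available from Section 6 of \cite{Okounkov:2015aa}. This is the natural generalization of the argument for $T^\ast\mathbf{Gr}_{k,n}$ in \cite{Pushkar:2016qvw}, and carrying it out in the arbitrary-quiver setting is the technical core of the theorem.
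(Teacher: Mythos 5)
Your proposal takes essentially the same approach as the paper's cited reference (Theorem 10 of \cite{Pushkar:2016qvw}), to which the paper defers the proof: degeneration of the domain to cancel factors of $\mathbf{G}$, geometric identification of $\det H^\bullet(\mathscr{V}_i\otimes\pi^\ast\mathcal{O}_{p_1})$ with the fiber of $\mathcal{L}_i$ up to bubble corrections, and the $q\to 1$ limit via properness of the relative evaluation maps. One minor caveat on the sketch: the claim that ``the position of an interior marked point on the base is irrelevant'' at $q=1$ does not by itself bridge the relative-versus-absolute gap at $p_1$---what actually lets you replace the relative descendent by an absolute insertion of $\mathbf{G}^{-1}\widehat{\mathcal{L}}_i(z)$ is the gluing reorganization of the bubble chain at $p_1$ into a capped vertex, which is exactly the bookkeeping you correctly flag as the technical core in your final paragraph.
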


We will use this fact to compute the formula for the eigenvalues of the operators $\hat{\tau}(z)$.

Let us introduce the following notation.  The
eigenvalues of $\widehat{\mathcal{L}}_i(z)$ are $\lambda_{{\bf p},i}(z)$, so that
$\lambda_{{\bf p},i}(0)=\lambda^0_{{\bf p},i}$, the eigenvalue of the classical multiplication on $\mathcal{L}_i$, corresponding to a fixed point $\mathbf{p} \in X^{\mathsf{T}}$. Using standard arguments of perturbation theory \cite{kato1952}, the above assumption gives:
\begin{lemma} \label{lemmpowe}
{\it The eigenvalues of $\widehat{\mathcal{L}}_i(z)$ are power series in K\"ahler parameters $\lambda_{{\bf p},i}(z) \in \mathbb{C}[[z_1,z_2,\dots]]$. }
\end{lemma}

\begin{proof}
We assume that there is only one K\"ahler parameter which we denote $z$. 
The general case then follow from the same argument applied for each $z_i$. 

The eigenvalues of $\widehat{\mathcal{L}}_i(z)$ belong to the algebraic closure of the field of Laurant series, i.e., they are elements of the field of Puiseux series in $z$. Assume that for some $\widehat{\mathcal{L}}_i(z)$, there is an eigenvalue which is a non-trivial Puiseux series. In other words it is of the form
$$
\lambda_{{\bf p},i}(z) = \lambda^{0}_{{\bf p},i} +  \lambda^{1}_{{\bf p},i} \, z^{1/m}+ \lambda^{2}_{{\bf p},i} \,z^{2/m}+ \dots, \ \ \ m \in \mathbb{N}
$$
with $m>1$. There are no negative powers of $z$ because $\widehat{\mathcal{L}}_i(0)$ are regular by our assumption.

Then, there is a set of $m$ eigenvectors, say, labeled by subset of fixed points $\{ {\bf p}_1,\dots,{\bf p}_m\}$ which undergo a cyclic permutation once we go around $z=0$ along a circle of sufficiently small radius, i.e., when the K\"ahler parameter transforms $z\to z e^{2\pi i }$. 
This is only possible when the leading coefficient of the eigenvalues  $\lambda_{{\bf p}_1,j}(0)=\dots =\lambda_{{\bf p}_m,j}(0)$ for all $j$.
In other words, there is no $\mathcal{L}_j$ for which the corresponding eigenvalues are distinct. We arrive at a contradiction, thus $m=1$. 
\end{proof}

Let ${l}_{\mathbf{p}, i}=\frac{\lambda_{{\bf p},i}(z)}{\lambda^0_{{\bf p},i}}$ be the normalized eigenvalue.
\begin{lemma}
The following function
$$f(t)=\exp\Big({\frac{1}{q-1}\int d_q t\ln\l(t)}\Big),$$ where $\int d_q tf(t)=(1-q)\sum^{\infty}_{n=0}f(tq^n)$ is the standard Jackson $q$-integral, satisfies
 $$f(qt)=l(t)f(t)\,.$$
\end{lemma}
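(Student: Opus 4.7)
The plan is to verify the functional equation by a direct computation, exploiting the fact that the Jackson $q$-integral is a discrete antiderivative adapted to the scaling $t \mapsto qt$.

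First, I would denote
\[
F(t) := \int d_q t\, \ln l(t) = (1-q)\sum_{n=0}^{\infty} \ln l(tq^n)
\]
and examine its behavior under rescaling of the argument. Substituting $t \mapsto qt$ and reindexing the summation gives
\[
F(qt) = (1-q)\sum_{n=0}^{\infty} \ln l(tq^{n+1}) = (1-q)\sum_{m=1}^{\infty} \ln l(tq^m),
\]
so that the sum telescopes to $F(t) - F(qt) = (1-q)\ln l(t)$, which is essentially the statement that the Jackson integral inverts the $q$-difference operator.

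Second, since $f(t) = \exp\bigl(F(t)/(q-1)\bigr)$, the ratio $f(qt)/f(t)$ equals $\exp\bigl((F(qt) - F(t))/(q-1)\bigr)$. Substituting the telescoping identity from the previous step yields
\[
\frac{f(qt)}{f(t)} = \exp\!\left(\frac{-(1-q)\ln l(t)}{q-1}\right) = \exp(\ln l(t)) = l(t),
\]
which is the desired equation.

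There is no serious obstacle here: the only delicate point is making sense of the infinite sum defining $F(t)$. In the setting of the paper, $l(t) = l_{\mathbf{p},i}(z)$ is the normalized eigenvalue with $l(0)=1$, so $\ln l(tq^n)$ is a formal power series with no constant term; thus $F(t)$ is well defined in the $z$-adic topology on formal power series, and the manipulations above are justified coefficient-by-coefficient. (Analytically, for $|q|<1$ and $l$ sufficiently regular at $t=0$, the same argument goes through pointwise.)
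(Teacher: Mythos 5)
Your proof is correct and is the natural, direct verification: the Jackson integral is designed precisely so that $F(t) - F(qt) = (1-q)\ln l(t)$, and exponentiating with the prefactor $1/(q-1)$ gives the claimed $q$-difference relation. The paper states this lemma without supplying a proof, and your argument is exactly the elementary computation the authors clearly have in mind; your closing remark about well-definedness of the infinite sum (as a formal power series in $z$, since $l(0)=1$) is a sensible and appropriate addition.
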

\noindent We denote
\begin{equation}
F_{\mathbf{p}}(z)=\exp\Big({\frac{1}{q-1}
\sum_{i\in I}\int d_q z_i\ln\lambda_{{\bf p},i}(z)}\Big)
\end{equation}
Let us formulate an omnibus theorem concerning the solutions of the system of difference equations and eigenvalues of quantum multiplication operators.
\begin{theorem}
\begin{enumerate}
\item The operator $\Psi(0)$ is the identity operator.

\item Let $\Psi_{{\fp}}(z)$ be the $\fp$-th column of the matrix $\Psi(z)$.
In the limit $q\mapsto 1$ the capping operator has the following asymptotic expansion
\begin{equation}
\Psi_{\it{\fp}}(z) =F_{\mathbf{p}}(z)\Big(\psi_\fp(z) + \cdots\Big),
\end{equation}
where $\psi_{\fp}(z)$ are the column eigenvectors of the operators of quantum multiplication corresponding to the fixed point $\fp$ and dots stand for the terms vanishing in the limit $q\to 1$.

\item The identity element in the quantum K-theory decomposes in the following manner
\begin{equation}
\hat{\bold{1}}(z) = \sum\limits_{\fp} v_{\fp}(z) \psi_{\fp}(z)\,,
\end{equation}
where $v_{\fp}(z)$ are the eigenvalues of quantum multiplication .

\item The coefficients of the bare vertex function  have the following  $q\to 1$ asymptotic
in the fixed points basis
\begin{eqnarray}
V^{(\tau)}_{\fp}(z)=F_{\mathbf{p}}(z)^{-1}(\tau_{\fp}(z) v_\fp(z) +\cdots),
\end{eqnarray}
where $\tau_{\fp}(z)$ denotes the eigenvalue of the operator of quantum multiplication by quantum tautological bundle $\hat{\tau}(z)$ for the eigenvector $\psi_{\fp}(z)$, dots stand for the terms vanishing in the limit $q\to 1$.
\end{enumerate}
\end{theorem}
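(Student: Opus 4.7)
The plan is to attack the four parts sequentially, with part (2) doing most of the work and parts (1), (3), (4) following from it either by direct inspection or by matching asymptotics against the relation $\hat{V}^{(\tau)}(z)=\Psi(z)V^{(\tau)}(z)$.

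For \textbf{part (1)}, I would note that at $z=0$ only degree-$\vec 0$ quasimaps contribute to $\Psi$, and these are constant maps $\mathbb{P}^1 \to X$. The moduli space $\textsf{QM}^{\vec 0}_{\text{relative } p_1,\,\text{nonsing } p_2}$ is just $X$ itself, and both evaluation maps send a constant quasimap to its common value, so $(\text{ev}_{p_1},\widetilde{\text{ev}}_{p_2})_*$ of the symmetrized virtual structure sheaf is the diagonal class $\Delta_*\mathcal{O}_X \in K_{\mathsf T}(X \times X)$. Under the shifted bilinear form $(\cdot,\cdot)$ from \eqref{bilf}, which contains the $K^{-1/2}$ twist precisely matched to the $K_{\mathrm{vir}}^{1/2}$ of $\widehat{\mathcal O}_{\mathrm{vir}}$, the diagonal represents the identity operator. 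For \textbf{part (2)}, I would make the WKB-type ansatz $\Psi_{\fp}(z) = F_{\mathbf{p}}(z)\bigl(\psi_{\fp}(z) + O(q-1)\bigr)$ and plug it into the difference equation \eqref{difference}. Since $\mathcal L_i$ acts by classical multiplication, it is diagonal in the fixed-point basis with eigenvalue $\lambda^0_{\fp,i}$ on $\fp$. The Lemma shows that the product $\prod_i F_\fp$ is designed so that the ratio $F_{\mathbf{p}}(q^{\mathcal L_i}z)/F_{\mathbf{p}}(z)$ tends to $l_{\fp,i}(z)=\lambda_{\fp,i}(z)/\lambda^0_{\fp,i}$ as $q \to 1$. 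Combined with $\mathbf{M}_{\mathcal L_i}(z) \to \widehat{\mathcal L}_i(z)$ from the theorem just above and $q^{\mathcal L_i}z \to z$, the leading terms of \eqref{difference} reduce to
\[
\lambda_{\fp,i}(z)\,\psi_\fp(z) \;=\; \widehat{\mathcal L}_i(z)\,\psi_\fp(z),
\]
so $\psi_\fp(z)$ is an eigenvector of $\widehat{\mathcal L}_i(z)$ with eigenvalue $\lambda_{\fp,i}(z)$, as required. The normalization of $\psi_\fp$ is then pinned down by the initial condition $\Psi(0)=I$ from part (1), because $F_{\mathbf p}(0)$ is a specified constant and the classical limit of $\psi_\fp(z)$ must be the fixed-point class $[\fp]$.

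For \textbf{part (3)}, since $X^{\mathsf T}$ is finite the commuting operators $\{\widehat{\mathcal L}_i(z)\}$ have simple joint spectrum (generically in $z$), so the eigenvectors $\psi_\fp(z)$ form a basis of $K_{\mathsf T}(X)_{\mathrm{loc}}$; the coefficients $v_\fp(z)$ are then \emph{defined} by expanding the multiplicative identity $\hat{\mathbf 1}(z)$ in this basis. For \textbf{part (4)}, I would substitute the asymptotic from (2) into $\hat V^{(\tau)}(z) = \Psi(z) V^{(\tau)}(z)$. The capped vertex has a finite $q \to 1$ limit equal to $\hat\tau(z)$, and using (3) together with the fact that $\hat\tau(z) = \hat\tau(z) \circledast \hat{\mathbf 1}(z)$ decomposes as $\sum_\fp v_\fp(z)\tau_\fp(z)\psi_\fp(z)$, we can read off column by column in the $\psi_\fp$ basis:
\[
F_{\mathbf p}(z)\bigl(\psi_\fp(z)+\cdots\bigr)\cdot V^{(\tau)}_\fp(z) \;=\; v_\fp(z)\tau_\fp(z)\psi_\fp(z) + \cdots .
\]
Cancelling the common factor $\psi_\fp(z)$ yields the claimed asymptotic $V^{(\tau)}_\fp(z) = F_{\mathbf p}(z)^{-1}\bigl(\tau_\fp(z)v_\fp(z) + \cdots\bigr)$.

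The \textbf{main obstacle} is part (2): one must carefully verify the multiparameter Jackson-integral identity for $F_{\mathbf p}(z)$ so that the ansatz actually solves every equation of the compatible system \eqref{difference} to leading order, and that the subleading terms in the ansatz can be chosen consistently. In particular, the shift $z \mapsto q^{\mathcal L_i}z$ mixes all Kähler variables through the pairing $\langle \mathcal L_i, \mathbf d \rangle$, and one needs the $q$-integral appearing in $F_{\mathbf p}$ to produce exactly the factor $l_{\fp,i}(z)$ in the limit; this is a WKB computation dual to the one carried out for $\mathfrak{sl}_2$ in \cite{Pushkar:2016qvw}, which generalizes essentially unchanged but requires verifying the compatibility of the $\mathbf{M}_{\mathcal L_i}$ in the multi-parameter case (guaranteed by the construction in \cite{Okounkov:2016sya}).
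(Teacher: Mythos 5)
The paper does not actually prove this theorem; it simply refers the reader to \cite{Pushkar:2016qvw} where the single-variable case ($X=T^*Gr(k,n)$) is worked out, stating that the general case is a direct generalization. Your reconstruction is exactly that WKB-style argument: part (1) from constant (degree-zero) quasimaps giving the diagonal, part (2) by inserting the ansatz $\Psi_\fp=F_\fp(\psi_\fp+\cdots)$ into the $q$-difference equation \eqref{difference} and using $\mathbf M_{\mathcal L_i}\to\widehat{\mathcal L}_i$ together with the Lemma's shift identity, part (3) as a definition once the $\psi_\fp$ form a basis, and part (4) by matching the $q\to1$ asymptotics of $\hat V^{(\tau)}=\Psi V^{(\tau)}$ against $\hat\tau=\hat\tau\circledast\hat{\mathbf 1}=\sum_\fp v_\fp\tau_\fp\psi_\fp$ in the $\psi_\fp$-basis. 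This is the same route as the cited proof, and you correctly isolate the one genuine technical point the paper is silent about: that in the multi-variable case the shift $z_i\mapsto qz_i$ also perturbs the $j\ne i$ Jackson integrals in $F_\fp$, so the relation $F_\fp(q^{\mathcal L_i}z)/F_\fp(z)=l_{\fp,i}(z)$ holds only to leading order as $q\to1$, with consistency of the subleading terms guaranteed by compatibility of the operators $\mathbf M_{\mathcal L_i}$.
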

For the proof of this theorem we will refer the reader to \cite{Pushkar:2016qvw}, where it is proven in the case of a single variable $z$, when $X$ is $T^*Gr(k,n)$.
Current theorem is a direct generalization.

Part (4) of the Theorem above immediately implies that the eigenvalues of the operator of quantum multiplication by $\hat{\tau}(z)$ can be computed from the
asymptotics of the bare vertex functions.

\begin{corollary}
The following expression:
\begin{equation}
\label{eval}
\tau_{\fp}(z)=\lim\limits_{q \rightarrow 1 } \dfrac{V^{(\tau)}_{\fp}(z)}{V^{(1)}_{\fp}(z)}
\end{equation}
gives the eigenvalues of the operator of quantum multiplication by $\hat{\tau}(z)$ corresponding to
a fixed point $\fp \in X^{\mathsf{T}}$.
\end{corollary}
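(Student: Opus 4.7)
The plan is to derive the corollary as a direct consequence of part (4) of the preceding theorem, since both the numerator and denominator of the proposed ratio admit explicit $q\to 1$ asymptotic expansions in the fixed-point basis. First I would apply part (4) with the chosen descendent $\tau$ to obtain
\[
V^{(\tau)}_{\fp}(z) = F_{\mathbf{p}}(z)^{-1}\bigl(\tau_{\fp}(z)\, v_{\fp}(z) + o(1)\bigr)
\]
as $q\to 1$, and then apply it with $\tau = 1$. The key observation for the denominator is that the operator of quantum multiplication by $\widehat{\bold 1}(z)$ is the multiplicative identity of $QK_{\mathsf T}(X)$, by the proposition of Subsection~\ref{Subsec:QuantumKring}; hence its eigenvalue at every fixed point $\fp$ equals $1$, and therefore
\[
V^{(1)}_{\fp}(z) = F_{\mathbf{p}}(z)^{-1}\bigl(v_{\fp}(z) + o(1)\bigr).
\]

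Next I would form the ratio: the scalar prefactor $F_{\mathbf{p}}(z)^{-1}$, which encodes the rapidly oscillating/divergent Jackson $q$-integral contribution, appears in both the numerator and denominator with the same exponent and cancels exactly. The remaining leading coefficient $v_{\fp}(z)$, which is the $\fp$-th component of the decomposition of $\widehat{\bold 1}(z)$ in the eigenbasis $\{\psi_{\fp}(z)\}$ from part (3), also cancels after one checks it is nonzero. The surviving $q\to 1$ limit is then $\tau_{\fp}(z)$, which is exactly the claim.

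The main obstacle I anticipate is the non-vanishing of $v_{\fp}(z)$, needed to guarantee that the denominator of the formal ratio does not vanish identically in the limit and that no higher-order asymptotics are required. The natural way to handle this is to work in a formal neighbourhood of $z=0$: at $z=0$ the capping operator equals the identity (part (1) of the Theorem), the eigenvectors $\psi_{\fp}(0)$ are just the classical fixed-point classes, and the decomposition $\widehat{\bold 1}(0) = [\mathcal O_X] = \sum_{\fp}[\mathcal O_{\fp}]$ in the equivariant localization basis yields $v_{\fp}(0)\neq 0$. By continuity in $z$, $v_{\fp}(z)$ is a nonzero element of the fraction field of the coefficient ring, and the ratio makes sense as a well-defined rational function. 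Finally, I would invoke the rationality of $\hat{\tau}(z)$ (established after the definition of capped vertex) to conclude that the pointwise $q\to 1$ limit of the ratio in fact coincides with the eigenvalue $\tau_{\fp}(z)$ as an element of $\mathbb C(a_m, z)$, completing the proof.
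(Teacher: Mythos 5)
Your proof is correct and follows essentially the same route the paper intends when it states that the corollary "immediately" follows from part~(4) of the omnibus theorem: apply part~(4) once with the given descendent $\tau$ and once with $\tau=1$, use that $\widehat{\bold 1}(z)$ is the unit so its eigenvalue at each $\fp$ is $1$, and cancel both the scalar $F_{\mathbf{p}}(z)^{-1}$ and the coefficient $v_{\fp}(z)$ in the ratio. The paper leaves the nonvanishing of $v_{\fp}(z)$ implicit; your justification via $z=0$ and part~(1) of the theorem (together with $\widehat{\bold 1}(0)=[\mathcal O_X]$ decomposing with nonzero coefficients in the localization basis) is a reasonable way to close that small gap.
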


\section{Computations for Partial Flags}\label{Sec:QuantumKPartialFlags}
In this section we will study in detail and apply the formalism which we have developed in the previous section to the case when Nakajima quiver variety $X$ is the cotangent bundle to the space of partial flags. In other words, we are interested in studying quantum K-theory of the following quiver of type $A_n$\footnote{We are using standard quaternionic notations.}

\vspace{0.1in}
\begin{center}
\begin{tikzpicture}
\draw [ultra thick] (0,0) -- (3,0);
\draw [ultra thick] (3,1) -- (3,0);
\draw [fill] (0,0) circle [radius=0.1];
\draw [fill] (1,0) circle [radius=0.1];
\draw [fill] (2,0) circle [radius=0.1];
\draw [fill] (3,0) circle [radius=0.1];
\node (1) at (0.1,-0.3) {$\mathbf{v}_{1}$};
\node (2) at (1.1,-0.3) {$\mathbf{v}_2$};
\node (3) at (2.1,-0.3) {$\ldots$};
\node (4) at (3.1,-0.3) {$\mathbf{v}_{n-1}$};
\fill [ultra thick] (3-0.1,1) rectangle (3.1,1.2);
\node (5) at (3.1,1.45) {$\mathbf{w}_{n-1}$};
\end{tikzpicture}
\end{center}
\vspace{0.1in}

The stability condition is chosen so that maps $W_{n-1}\to {\bf V}_{n-1}$ and $V_i\to V_{i-1}$ are surjective. For the variety to be non-empty the sequence $\mathbf{v}_{1},\ldots ,\mathbf{v}_{n-1}, \mathbf{w}_{n-1}$ must be non-decreasing. The fixed points of this Nakajima quiver variety and the stability condition are classified by chains of subspaces spanned by coordinate vectors $\mathbf{V}_{1}\subset \ldots \subset \mathbf{V}_{n-1}\subset \mathbf{W}_{n-1}$, where $|\mathbf{V}_{i}|=\mathbf{v}_{i}, \mathbf{W}_{n-1}=\{a_1,\ldots , a_{\mathbf{w}_{n-1}} \}$. The special case when ${\bf v}_i=i$, ${\bf w}_{n-1}=n$ is known as complete flag variety, which we denote as $\mathbb{F}l_n$.
It will be convenient to introduce the following notation: $\mathbf{v}_i^{\prime}=\mathbf{v}_{i+1}-\mathbf{v}_{i-1}$, for $i=2,\ldots ,n-2$, $\mathbf{v}_{n-1}^{\prime}=\mathbf{w}_{n-1}-\mathbf{v}_{n-2}$, $\mathbf{v}_1^{\prime}=\mathbf{v}_{2}$.\\

\noindent{\bf Remark.} In principle, in the computations below one could add extra framings to vertices to study the most generic situation in the setting of $A_n$ quiver, but we shall refrain from doing it in this work to make calculations more transparent and simple.

\subsection{Bare vertex for partial flags}
\label{Subsec:BareVertex}
The key for computing the bare vertex is the localization theorem in K-theory,
which gives the following formula for the equivariant pushforward, which constitutes bare vertex $V^{(\tau)}_{\fp}(z)$:
$$
V^{(\tau)}_{\fp}(z)=\sum\limits_{{\bf d}\in \mathbb{Z}^n_{\ge0} }\sum\limits_{(\mathscr{V},\mathscr{W}) \in  (\textsf{QM}^{\bf d}_{{\rm nonsing} \, p_2})^{\mathsf{T}}}\, \hat{s}(  \chi({\bf d}) )\, z^{\bf d} q^{\deg(\mathscr{P})/2} \tau (\left.\mathscr{V}\right|_{p_1}).
$$
Here the sum runs over the $\mathsf{T}$-fixed quasimaps which take value $\fp$ at the nonsingular point $p_2$. We use notation $\hat{s}$ for the Okounkov's roof function defined by
$$
\hat{s}(x)=\dfrac{1}{x^{1/2}-x^{-1/2}}, \ \ \ \hat{s}(x+y)=\hat{s}(x)\hat{s}(y).
$$ and it is applied to the virtual tangent bundle:
\beq \label{pcontr}
\chi(\textbf{d})={\rm char}_{\mathsf{T}}\Big( T^{vir}_{\{(\mathscr{V}_i\},\ \mathscr{W}_{n-1})} \textsf{QM}^{\bf d}  \Big).
\eeq

The condition ${\bf d}\in \mathbb{Z}^n_{\ge0} $ is determined by stability conditions, which characterize all allowed degrees for quasimaps.

\noindent 

It will be convenient to adopt the following notations:
\beq
\varphi(x)=\prod^{\infty}_{i=0}(1-q^ix),\quad  \{x\}_{d}=\dfrac{(\hbar/x,q)_{d}}{(q/x,q)_{d}} \, (-q^{1/2} \hbar^{-1/2})^d, \ \ \textrm{where}  \ \ (x,q)_{d}=\frac{\varphi(x)}{\varphi(q^dx)}.\nonumber
\eeq
The following statement is true (for the proof see section 3.4 of \cite{Pushkar:2016qvw}).

\begin{lemma}
\label{Th:LemmaEquivBundleContrib}
The contribution of equivariant line bundle $xq^{-d}\mathcal{O}(d)\subset \mathcal{P}$ to $\chi(\mathbf{d})$ is $\{x\}_d$.
\end{lemma}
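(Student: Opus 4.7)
The plan is to extract the contribution of a single line-bundle summand $L = xq^{-d}\mathcal{O}(d) \subset \mathscr{P}$ to the localization weight $\hat{s}(\chi(\mathbf{d}))\, q^{\deg\mathscr{P}/2}$ by an explicit equivariant computation on the base curve $\mathbb{P}^1$. The key inputs are the Bott formula at the two $\mathbb{C}^{\times}_{q}$-fixed points of $\mathbb{P}^1$ and a $q$-Pochhammer rearrangement of the resulting products.

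By the formula for the reduced virtual tangent bundle recalled in Subsection \ref{PNVSGO}, the $L$-summand contributes to $\chi(\mathbf{d})$ the graded piece $H^{\bullet}(\mathbb{P}^1, L) + \hbar\, H^{\bullet}(\mathbb{P}^1, L^{*})$. Equivariant Bott localization on $\mathbb{P}^1$, combined with equivariant Serre duality to compute $H^1(\mathcal{O}(-d))$ (accounting for the $\mathbb{C}^{\times}_{q}$-character of $\omega_{\mathbb{P}^1}$), yields the virtual character
\beq
\chi_L \;=\; \sum_{k=0}^{d} x q^{-k} \;-\; \sum_{k=1}^{d-1} \hbar q^{k}/x,
\eeq
which degenerates to the classical tangent direction $x+\hbar/x$ at $d=0$ (corresponding to the classical tangent weight of $X$ at the fixed point $\mathbf{p}$).

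Next I would apply $\hat{s}(w) = w^{1/2}/(w-1)$ termwise to this virtual character. After incorporating the twist $q^{d/2}$ coming from the symmetrized virtual structure sheaf $\widehat{\mathcal{O}}_{\textrm{vir}} = \mathcal{O}_{\textrm{vir}}\otimes\mathscr{K}^{1/2}_{\textrm{vir}}\, q^{\deg\mathscr{P}/2}$, the collected half-powers of $x$, $q$, $\hbar$ reassemble into the prefactor $(-q^{1/2}\hbar^{-1/2})^d$, while the remaining linear factors $(1-xq^{-k})$ and $(1-\hbar q^k/x)$ telescope into the $q$-Pochhammer products $(q/x,q)_d$ and $(\hbar/x,q)_d$, respectively. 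Normalizing against the classical $d=0$ contribution $\hat{s}(x+\hbar/x)$, which is naturally separated as the tangent-weight factor of $X$ at $\mathbf{p}$, then produces exactly $\{x\}_d$.

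The principal bookkeeping subtlety is the equivariant Serre-duality computation of $H^1(\mathcal{O}(-d))$ with its correct $\mathbb{C}^{\times}_{q}$-weights and the careful tracking of signs from the roof function acting on a virtual character with both positive and negative pieces (in particular matching the overall sign in $(-q^{1/2}\hbar^{-1/2})^d$). Both are fixed by direct verification in the small cases $d=0,1,2$, after which the general identity follows by telescoping the $q$-Pochhammer products. This argument is a direct generalization of the $\mathfrak{sl}_2$ computation carried out in Section~3.4 of \cite{Pushkar:2016qvw}.
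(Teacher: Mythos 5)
Your argument is essentially correct and follows the same route as the paper, which itself simply cites the rank-one ($\mathfrak{sl}_2$) computation in Section~3.4 of \cite{Pushkar:2016qvw}: equivariant Bott localization on $\mathbb{P}^1$ plus Serre duality give the virtual character of $H^{\bullet}(L)\oplus\hbar H^{\bullet}(L^{*})$, one normalizes against the $d=0$ term (which is the tangent weight $x+\hbar/x$ of $X$ at the fixed point), and then applying the roof function $\hat{s}$ multiplicatively telescopes the remaining $d$ positive and $d$ negative weights into $(\hbar/x;q)_d/(q/x;q)_d$ with the prefactor $(-q^{1/2}\hbar^{-1/2})^d$ assembling from the half-powers. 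Your character
\begin{equation*}
\chi_L=\sum_{k=0}^{d} x q^{-k}-\sum_{k=1}^{d-1}\hbar q^{k}/x
\end{equation*}
is correct for $d\geq 1$, and the normalization step is the right one. One bookkeeping point to be careful about: the prefactor $(-q^{1/2}\hbar^{-1/2})^d$ already comes out of $\hat{s}$ alone (since $\hat{s}$ includes the $\mathscr{K}^{1/2}_{\textrm{vir}}$ half-power by $\hat{s}(w)=w^{1/2}/(w-1)$, and the reduced character has exactly $d$ positive and $d$ negative terms); the additional polarization twist $q^{\deg\mathscr{P}/2}$ in $\widehat{\mathcal{O}}_{\textrm{vir}}$ is accounted for separately by the explicit $q^{N(\ard)/2}$ prefactor in the vertex formula and should not be folded into $\{x\}_d$ a second time. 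This does not affect your telescoping or the final identity, but the phrase ``after incorporating the twist $q^{d/2}$'' would double-count that power if taken literally.
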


To compute the vertex function we will also need to classify fixed points of $\textsf{QM}^{\bf d}_{{\rm nonsing} \, p_2}$. Such a point is described by the data $(\{\mathscr{V}_i\},\{\mathscr{W}_{n-1}\})$, where ${\rm deg}\mathscr{V}_i=d_i, {\rm deg}\mathscr{W}_{n-1}=0$. Each bundle $\mathscr{V}_i$ can be decomposed into a sum of line bundles $\mathscr{V}_i=\mathcal{O} (d_{i,1}) \oplus \ldots \oplus \mathcal{O}(d_{i,{\bf v}_i})$ (here $d_i=d_{i,1}+\ldots +d_{i,{\bf v}_i}$). For a stable quasimap with such data to exist the collection of $d_{i,j}$ must satisfy the following conditions
\begin{itemize}
\item $d_{i,j}\geq 0$,
\item for each $i=1,\ldots ,n-2$ there should exist a subset in $\{d_{i+1,1},\ldots d_{i+1,{\bf v}_{i+1}}\}$ of cardinality ${\bf v}_i$ $\{d_{i+1,j_1},\ldots d_{i+1,j_{{\bf v}_i}}\}$, such that $d_{i,k}\geq d_{i+1, j_k}$.
\end{itemize}
To summarize, we will denote collections satisfying such conditions as lying in a chamber $d_{i,j}\in C$.

Now we are ready to sum up contributions for the entire vertex function.

\begin{proposition}
Let $\fp=\mathbf{V}_{1}\subset \ldots \subset \mathbf{V}_{n-1}\subset \{a_1,\cdots,a_{\mathbf{w}_{n-1}}\}$ $(\mathbf{V}_{i}=\{ x_{i,1},\ldots x_{i,\mathbf{v}_{i}}\})$ be a chain of subspaces defining a torus fixed point $\fp\in X^{\mathsf{T}}$.
Then the coefficient of the vertex function for this point is given by:
$$
V^{(\tau)}_{\fp}(z) = \sum\limits_{d_{i,j}\in C}\, z^{\ard} q^{N(\ard)/2}\, EHG\ \ \tau(x_{i,j} q^{-d_{i,j}}),
$$
where $\ard=(d_1,\ldots ,d_{n-1}),d_i=\sum_{j=1}^{\mathbf{v}_i}d_{i,j},N(\ard)=\mathbf{v}_i^{\prime}d_i$,
$$
E=\prod_{i=1}^{n-1}\prod\limits_{j,k=1}^{\mathbf{v}_i}\{x_{i,j}/x_{i,k}\}^{-1}_{d_{i,j}-d_{i,k}},
$$
$$
G=\prod\limits_{j=1}^{\mathbf{v}_{n-1}} \prod\limits_{k=1}^{\mathbf{w}_{n-1}} \{x_{n-1,j}/a_k\}_{d_{n-1,j}},
$$
$$
H=\prod_{i=1}^{n-2}\prod_{j=1}^{\mathbf{v}_i}\prod_{k=1}^{\mathbf{v}_{i+1}}\{x_{i,j}/x_{i+1,k}\}_{d_{i,j}-d_{i+1,k}}.
$$
\end{proposition}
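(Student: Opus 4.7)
The plan is to carry out virtual equivariant localization on $\textsf{QM}^{\ard}_{\text{nonsing}\, p_2}$ for the $A_{n-1}$ quiver above, generalizing the Grassmannian ($n=2$) computation in \cite{Pushkar:2016qvw}, Section 3.4. The master formula displayed just before the proposition already reduces the computation to three tasks: (i) enumerate the $\mathsf{T}$-fixed locus of $\textsf{QM}^{\ard}_{\text{nonsing}\, p_2}$ over $\fp$; (ii) evaluate $\hat{s}(\chi(\ard))$ via the cited Lemma; and (iii) compute $\deg(\mathscr{P})$. The insertion $\tau(\mathscr{V}|_{p_1})$ then contributes $\tau(x_{i,j}q^{-d_{i,j}})$ since $\mathcal{O}(d_{i,j})$ restricted to $p_1$ carries $\mathbb{C}^\times_q$-weight $q^{-d_{i,j}}$.

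First I would classify the $\mathsf{T}$-fixed quasimaps of degree $\ard$ evaluating to $\fp$ at $p_2$. Each $\mathscr{V}_i$ splits $\mathsf{T}$-equivariantly as $\bigoplus_{j=1}^{\mathbf{v}_i} x_{i,j}\mathcal{O}(d_{i,j})$ with $\sum_j d_{i,j}=d_i$ and with the weights $x_{i,j}$ pinned down by the fixed-point data defining $\fp$, while $\mathscr{W}_{n-1}\cong\bigoplus_k a_k\mathcal{O}$ is rigid. The existence of a section with $\mu=0$ compatible with the stability condition (surjectivity of $W_{n-1}\to V_{n-1}$ and $V_i\to V_{i-1}$) is equivalent to $d_{i,j}\ge 0$ together with the combinatorial condition, for each $i<n-1$, that some sub-collection of $\{d_{i+1,\bullet}\}$ of size $\mathbf{v}_i$ is dominated entrywise by $\{d_{i,\bullet}\}$; this is the set $C$ in the statement.

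Second I would assemble $E$, $G$, $H$ and the $q$-prefactor. The virtual tangent character at such a fixed point splits, along the quiver, into the edge pieces $\text{Hom}(\mathscr{V}_i,\mathscr{V}_{i+1})=\bigoplus_{j,k}(x_{i+1,k}/x_{i,j})\mathcal{O}(d_{i+1,k}-d_{i,j})$ and their $\hbar$-duals, the framing piece $\text{Hom}(\mathscr{W}_{n-1},\mathscr{V}_{n-1})$ and its dual, and the gauge piece $-(1+\hbar)\bigoplus_i\text{Ext}^{\bullet}(\mathscr{V}_i,\mathscr{V}_i)$. Applying the Lemma (which computes the $\hat s$-contribution of each line-bundle summand of $\mathscr{P}$ to be $\{x\}_d$) together with the multiplicativity $\hat{s}(x+y)=\hat{s}(x)\hat{s}(y)$ yields $H$ from the edges, $G$ from the framing, and $E$ from the gauge, the latter appearing with the inverse due to the overall $-(1+\hbar)$. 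For the prefactor, taking the canonical polarization $\mathscr{P}=\text{Hom}(\mathscr{W}_{n-1},\mathscr{V}_{n-1})\oplus\bigoplus_{i=2}^{n-1}\text{Hom}(\mathscr{V}_i,\mathscr{V}_{i-1})$ from Section~6.1 of \cite{Okounkov:2015aa}, a direct degree count on the splittings gives
\[
\deg(\mathscr{P})=\mathbf{w}_{n-1}\, d_{n-1}+\sum_{i=2}^{n-1}\bigl(\mathbf{v}_i\, d_{i-1}-\mathbf{v}_{i-1}\, d_i\bigr)=\sum_{i=1}^{n-1}\mathbf{v}_i^{\prime}\, d_i,
\]
which is precisely $N(\ard)$.

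The main obstacle I expect is the bookkeeping in the second step: one must check that, after folding in the $\hbar$-dual half of $T^{\text{vir}}$ (which turns $\{x\}_d$ into compatible $\{\hbar/x\}_{-d}$ factors) and the overall sign in front of the gauge piece, the Lemma reassembles the virtual character into exactly $EHG$ as displayed, with $E$ inverted and $H$, $G$ not inverted. The remaining verifications — that the condition $C$ exactly cuts out the fixed locus, and that nonsingularity at $p_2$ is automatic for collections in $C$ — are detail-heavy but routine generalizations of the Grassmannian case in \cite{Pushkar:2016qvw}.
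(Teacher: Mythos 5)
Your approach is the same as the paper's: enumerate $\mathsf{T}$-fixed quasimaps by compatible splittings $\mathscr{V}_i=\bigoplus_j x_{i,j}\mathcal{O}(d_{i,j})$ constrained by $C$, apply the Lemma that each line-bundle summand of the polarization contributes $\{x\}_d$, and compute the degree of the polarization. The resulting identification of $E,G,H$ with the gauge/framing/edge pieces and of $q^{N(\ard)/2}$ with $q^{\deg(\mathscr{P})/2}$ is exactly how the paper assembles the answer.

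One imprecision worth fixing. You write the polarization as $\mathscr{P}=\text{Hom}(\mathscr{W}_{n-1},\mathscr{V}_{n-1})\oplus\bigoplus_{i=2}^{n-1}\text{Hom}(\mathscr{V}_i,\mathscr{V}_{i-1})$, but the canonical polarization of a Nakajima quiver variety (and what the paper uses) is
\begin{equation*}
\mathscr{P}=\mathscr{W}_{n-1}^{\ast}\otimes\mathscr{V}_{n-1}+\sum_{i=1}^{n-2}\mathscr{V}_{i+1}^{\ast}\otimes\mathscr{V}_{i}-\sum_{i=1}^{n-1}\mathscr{V}_{i}^{\ast}\otimes\mathscr{V}_{i},
\end{equation*}
a virtual class including the gauge term $-\sum_i\mathscr{V}_i^*\otimes\mathscr{V}_i$. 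Your version satisfies $T^{\mathrm{vir}}\neq\mathscr{P}+\hbar\,\mathscr{P}^*$, so it is not actually a polarization; and the $E$ factor in the proposition is precisely what the Lemma produces from the omitted $-\sum\mathrm{End}$ piece, so dropping it from $\mathscr{P}$ is inconsistent with your own derivation of $E$. It happens not to affect your conclusion only because $\deg(\mathscr{V}_i^*\otimes\mathscr{V}_i)=0$. Relatedly, the remark about folding in the ``$\hbar$-dual half'' and $\{\hbar/x\}_{-d}$ factors is unnecessary: the quantity $\{x\}_d=\frac{(\hbar/x,q)_d}{(q/x,q)_d}(-q^{1/2}\hbar^{-1/2})^d$ already packages both $\mathscr{P}$ and $\hbar\,\mathscr{P}^*$, which is why the Lemma is stated only in terms of line-bundle summands of $\mathscr{P}$ and no separate dual bookkeeping is needed.
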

\begin{proof}
For the proof we need to gather all contributions $\mathscr{P}$, which separate into 3 types:
$$
\mathscr{P}=\mathscr{W}_{n-1}^{\ast}\otimes \mathscr{V}_{n-1}+\sum_{i=1}^{n-2}\mathscr{V}_{i+1}^{\ast}\otimes \mathscr{V}_i-\sum_{i=1}^{n-1}\mathscr{V}_{i}^{\ast}\otimes \mathscr{V}_i
$$
so that their input in the localization formula is as follows.
$$
\mathscr{W}_{n-1}^{\ast}\otimes \mathscr{V}_{n-1}\longrightarrow \prod\limits_{j=1}^{\mathbf{v}_{n-1}} \prod\limits_{k=1}^{\mathbf{w}_{n-1}} \{x_{n-1,j}/a_k\}_{d_{n-1,j}},
$$
$$
\mathscr{V}_{i+1}^{\ast}\otimes \mathscr{V}_i \longrightarrow \prod_{j=1}^{\mathbf{v}_i}\prod_{k=1}^{\mathbf{v}_{i+1}}\{x_{i,j}/x_{i+1,k}\}_{d_{i,j}-d_{i+1,k}},
$$
$$
\mathscr{V}_{i}^{\ast}\otimes \mathscr{V}_i \longrightarrow \prod_{i=1}^{n-1}\prod\limits_{j,k=1}^{\mathbf{v}_i}\{x_{i,j}/x_{i,k}\}^{-1}_{d_{i,j}-d_{i,k}},
$$
Note, that $\deg(\mathscr{P}) =N(\ard)$. That gives the polarization term $q^{N(\ard)/2}$ in the vertex.
\end{proof}
The same formula for the vertex can an be obtained using the following integral representation \cite{Aganagic:2017la}, \cite{Aganagic:2017be}. It is very useful for a lot of applications, in particular for computing the eigenvalues $\tau_{\bf p}(z)$.
\begin{proposition}
The bare vertex function is given by
\beq \label{vertexint}
V^{(\tau)}_{\fp}(z)=
 \dfrac{1}{2 \pi i \mathsf{a}_{\fp}} \int\limits_{C_{\bf p}} \, \prod\limits_{i=1}^{n-1}\prod\limits_{j=1}^{\mathbf{v}_i}e^{-\frac{\ln(z^{\sharp}_i) \ln(s_{i,j}) }{\ln(q)}} \, E_{\rm{int}} G_{\rm{int}} H_{\rm{int}} \tau(s_1,\cdots,s_k)\prod\limits_{i=1}^{n-1}\prod\limits_{j=1}^{\mathbf{v}_i} \dfrac{d s_{i,j}}{s_{i,j}} ,
\eeq
where
$$
E_{\rm{int}}=\prod_{i=1}^{n-1}\prod\limits_{j,k=1}^{\mathbf{v}_i} \dfrac{\varphi\Big( \frac{s_{i,j}}{ s_{i,k}}\Big)}{\varphi\Big(\frac{q}{\hbar} \frac{s_{i,j}}{ s_{i,k}}\Big)},
$$
$$
G_{\rm{int}}=\prod\limits_{j=1}^{\mathbf{w}_{n-1}}\prod\limits_{k=1}^{\mathbf{v}_{n-1}} \dfrac{\varphi\Big(\frac{q}{\hbar} \frac{s_{n-1,k}}{ a_j }\Big)}{\varphi\Big(\frac{s_{n-1,k}}{ a_j }\Big)},
$$
$$
H_{\rm{int}}=\prod_{i=1}^{n-2}\prod\limits_{j=1}^{\mathbf{v}_{i+1}}\prod\limits_{k=1}^{\mathbf{v}_{i}} \dfrac{\varphi\Big(\frac{q}{\hbar} \frac{s_{i,k}}{ s_{i+1,j} }\Big)}{\varphi\Big(\frac{s_{i,k}}{ s_{i+1,j} }\Big)},
$$
$$\mathsf{a}_{\fp}=\prod\limits_{i=1}^{n-1}\prod\limits_{j=1}^{\mathbf{v}_i}e^{-\frac{\ln(z^{\sharp}_i) \ln(s_{i,j}) }{\ln(q)}} \, E_{\rm{int}} G_{\rm{int}} H_{\rm{int}}\Big|_{s_{i,j}=x_{i,j}},$$
and the contour $C_{\bf p}$ runs around points corresponding to chamber $C$, which we have defined after Lemma \ref{Th:LemmaEquivBundleContrib}, and the shifted variable  $z^{\sharp}=z(-\hbar^{^1/_2})^{\det(\mathscr{P})}$
\footnote{
Note that here we are using the notation defined for $z$ for $(-\hbar^{^1/_2})$, i.e.
$$
z^{\sharp}=\prod_{i=1}^{n-1}z^{\sharp}_i,
$$
$$
z^{\sharp}_i=z_i(-\hbar^{^1/_2})^{\mathbf{v}_i^{\prime}}.
$$}.
\end{proposition}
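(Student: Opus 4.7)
The plan is to prove the integral representation by evaluating the contour integral via iterated residues and matching term-by-term with the explicit sum formula from the preceding proposition. The essential point is that the integrand has been engineered so that the pole cluster enclosed by $C_{\fp}$ is in bijection with $\mathsf{T}$-fixed quasimaps of all admissible degrees based at $\fp$.

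First, I would analyze the pole structure of the integrand. Since $\varphi(x) = \prod_{i\geq 0}(1-q^i x)$, the factor $1/\varphi(s_{n-1,k}/a_j)$ in $G_{\rm int}$ has simple poles at $s_{n-1,k} = q^{-d}a_j$ for $d\ge 0$, and the factor $1/\varphi(s_{i,k}/s_{i+1,j})$ in $H_{\rm int}$ has simple poles at $s_{i,k} = q^{-d}s_{i+1,j}$. The contour $C_{\fp}$ by definition surrounds the pole cluster obtained by first sending $s_{n-1,k}\to q^{-d_{n-1,k}}x_{n-1,k}$ with $x_{n-1,k}\in\mathbf{W}_{n-1}$ chosen according to the chain $\mathbf{V}_{n-1}\subset\mathbf{W}_{n-1}$, and then inductively sending $s_{i,k}\to q^{-d_{i,k}}x_{i,k}$ with $x_{i,k}\in\mathbf{V}_{i+1}$ chosen according to the chain $\mathbf{V}_i\subset\mathbf{V}_{i+1}$. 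The stability constraints $d_{i,k}\ge d_{i+1,j_k}\ge 0$ defining the set $C$ of admissible degrees are exactly the constraints that each iterated residue be a genuine (rather than spurious) pole of the integrand, i.e.\ that the auxiliary numerator factors $\varphi(s_{i,j}/s_{i,k})$ in $E_{\rm int}$ do not cancel it.

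Next I would compute the residue at a generic pole $s_{i,j}=x_{i,j}q^{-d_{i,j}}$. Using the elementary identity $\varphi(x)/\varphi(q^d x) = (x,q)_d$ and the definition $\{x\}_d=(\hbar/x,q)_d/(q/x,q)_d\,(-q^{1/2}\hbar^{-1/2})^d$, each ratio of $\varphi$-functions in $E_{\rm int}$, $G_{\rm int}$, $H_{\rm int}$ specialises to the corresponding ratio of $\{x\}_d$-symbols appearing in $E$, $G$, $H$ of the preceding proposition. The exponential weight $e^{-\ln(z^{\sharp}_i)\ln(s_{i,j})/\ln q}$, when evaluated at $s_{i,j}=x_{i,j}q^{-d_{i,j}}$, factorizes as $(z^{\sharp}_i)^{d_{i,j}}$ times its value at $s_{i,j}=x_{i,j}$; summing over $j$ produces $\prod_i (z^{\sharp}_i)^{d_i}$, and expanding the shift $z^{\sharp}_i=z_i(-\hbar^{1/2})^{\mathbf{v}_i'}$ supplies simultaneously the monomial $z^{\ard}$ and the polarization prefactor $q^{N(\ard)/2}$, accounting also for the $(-q^{1/2}\hbar^{-1/2})^d$ in the $\{x\}_d$-symbols. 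The normalization $\alpha_{\fp}$ is, by construction, the value of the integrand at $s_{i,j}=x_{i,j}$ (i.e.\ the $\ard=\vec 0$ residue), so dividing by $\alpha_{\fp}$ cancels the absolute prefactor and leaves $\tau(x_{i,j})$ as the $\ard=\vec 0$ term, as required.

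Finally I would sum over all admissible residues and observe that the resulting series coincides with the formula of the preceding proposition, completing the proof. The main technical obstacle is the bookkeeping of half-integer powers of $q$ and $\hbar$ and of signs: one must verify that the $(-\hbar^{1/2})^{\mathbf{v}_i'd_i}$ factors produced by the shift $z\to z^{\sharp}$ combine correctly with the signs and $\hbar^{-1/2}$'s built into the $\{x\}_d$-symbols, and that the polarization degree $\deg(\mathscr{P})=N(\ard)=\sum_i \mathbf{v}_i' d_i$ computed from $\mathscr{P}=\mathscr{W}_{n-1}^\ast\otimes\mathscr{V}_{n-1}+\sum_{i}\mathscr{V}_{i+1}^\ast\otimes\mathscr{V}_i-\sum_i\mathscr{V}_i^\ast\otimes\mathscr{V}_i$ matches the power of $q^{1/2}$ that emerges from the residue calculation. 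A secondary subtlety is to check that the chosen order of iterated residues (from $i=n-1$ downward) produces exactly the combinatorial set $C$, which ultimately follows from the fact that the pairing of poles is controlled by the chain of subsets defining $\fp$.
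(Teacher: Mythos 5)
The paper does not actually supply a proof of this proposition; it only cites the integral representations of Aganagic--Okounkov. Your strategy -- iterated residues organized by the fixed point $\fp$, matched term-by-term against the explicit sum formula of the preceding proposition -- is exactly the standard way to derive such representations, and the plan is sound. Two points need tightening.

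First, the assertion that the $\varphi$-ratios ``specialise'' to ratios of $\{x\}_d$-symbols glosses over the actual residue calculus. For each variable carrying a pole, the residue of $1/\varphi(s/a)$ at $s=aq^{-d}$ produces a $\varphi'$ Jacobian, and only after combining that Jacobian with the evaluated ratios, normalizing by the $\ard=\vec 0$ residue, and invoking an identity of the form $(q^{1-d}y/\hbar,q)_d/(q^{-d}y,q)_d=(-q^{1/2}\hbar^{-1/2})^d\{y\}_d$ do the $\{x\}_d$-symbols emerge. The Jacobian contributes an extra $q^{N(\ard)}\hbar^{-N(\ard)}$; the shift $z\to z^\sharp$ contributes $(-\hbar^{1/2})^{N(\ard)}$; and $\prod\{y\}_d$ secretly contains $(-q^{1/2}\hbar^{-1/2})^{N(\ard)}$ (using $\sum d=N(\ard)$). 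These three combine to give the stated $q^{N(\ard)/2}$ prefactor -- so the $z^\sharp$ shift does not ``supply simultaneously $z^{\ard}$, $q^{N(\ard)/2}$ and the $(-q^{1/2}\hbar^{-1/2})^d$'' as you wrote; it supplies only the $(-\hbar^{1/2})^{N(\ard)}$ piece. Second, the chamber constraints $d_{i,k}\ge d_{i+1,j_k}\ge 0$ come directly from the pole structure of $G_{\rm int}$ and $H_{\rm int}$ (poles lie at $q^{-\delta}$ with $\delta\ge 0$), not from $E_{\rm int}$; the $\varphi(s_{i,j}/s_{i,k})$ zeros in $E_{\rm int}$ instead cancel spurious residues where two $s_{i,\bullet}$ try to converge to the same $s_{i+1,\bullet}$ (or the same $a_\bullet$). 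You also correctly note that the literal value $\alpha_{\fp}$ contains $\varphi(1)=0$ factors and must be read as the $\ard=\vec 0$ residue. With these adjustments the argument is correct.
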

\subsection{Bethe Equations and Baxter Operators}\label{Subsec:BEandBO}
We are now ready to compute the eigenvalues of the operators corresponding to the tautological bundles.

\begin{theorem}\label{betheth}The eigenvalues of
$\hat{\tau}(z)\circledast$ is given by $\tau({s_{i,k}})$, where $s_{i,k}$ satify
Bethe equations:
\begin{align}\label{eq:XXZBetheGeom}
\prod_{j=1}^{\mathbf{v}_{2}}\frac{s_{1,k}-s_{2,j}}{s_{1,k}-\hbar s_{2,j}}&=z_{1}{(-{\hbar}^{^1/_2})}^{-{\mathbf{v}_{1}^{\prime}}}\prod_{\substack{j=1 \\ j\neq k}}^{\mathbf{v}_{1}}\frac{s_{1,j}-s_{1,k}\hbar}{s_{1,j}\hbar-s_{1,k}}\,,\cr
\prod_{j=1}^{\mathbf{v}_{i+1}}\frac{s_{i,k}-s_{i+1,j}}{s_{i,k}-\hbar s_{i+1,j}}\prod_{j=1}^{\mathbf{v}_{i-1}}\frac{s_{i-1,j}-\hbar s_{i,k}}{s_{i-1,j}-s_{i,k}}&=z_{i}{(-{\hbar}^{^1/_2})}^{-{\mathbf{v}_{i}^{\prime}}}\prod_{\substack{j=1 \\ j\neq k}}^{\mathbf{v}_{i}}\frac{s_{i,j}-s_{i,k}\hbar}{s_{i,j}\hbar-s_{i,k}}\,,\\
\prod_{j=1}^{\mathbf{w}_{n-1}}\frac{s_{n-1,k}-a_{j}}{s_{n-1,k}-\hbar a_{j}}\prod_{j=1}^{\mathbf{v}_{n-2}}\frac{s_{n-2,j}-\hbar s_{n-1,k}}{s_{n-2,j}-s_{n-1,k}}&=z_{n-1}{(-\hbar^{^1/_2})}^{-{\mathbf{v}_{n-1}^{\prime}}}\prod_{\substack{j=1 \\ j\neq k}}^{\mathbf{v}_{n-1}}\frac{s_{n-1,j}-s_{n-1,k}\hbar}{s_{n-1,j}\hbar-s_{n-1,k}}\notag\,,
\end{align}
where $k=1,\dots, \textbf{v}_i$ for $i=1,\dots, \textbf{v}_{n-1}$.
\end{theorem}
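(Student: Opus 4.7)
The plan is to combine the integral representation~(\ref{vertexint}) of the bare vertex function with the eigenvalue formula~(\ref{eval}) and reduce the eigenvalue problem to a saddle point computation in the limit $q\to 1$.

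First I would expand each $q$-Pochhammer factor via the standard asymptotic
$$
\ln\varphi(x) = \sum_{n\geq 0}\ln(1-q^n x) \;\sim\; \frac{\mathrm{Li}_2(x)}{\ln q}, \qquad q\to 1^-,
$$
applied term by term to $E_{\rm int}$, $G_{\rm int}$, $H_{\rm int}$. Combined with the explicit prefactor $\exp\!\bigl(-\ln(z^\sharp_i)\ln(s_{i,j})/\ln q\bigr)$, the integrand of~(\ref{vertexint}) acquires the form $\exp(\mathcal{S}(s)/\ln q)\,\tau(s)\,(1+o(1))$, where $\mathcal{S}(s)$ is a linear combination of dilogarithms in the ratios $s_{i,j}/s_{i\pm 1,k}$, $s_{n-1,k}/a_j$, $s_{i,j}/s_{i,k}$, plus the linear term $-\sum_{i,j}\ln(z^\sharp_i)\ln(s_{i,j})$. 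The Laplace method then replaces the integral by a universal Gaussian prefactor (independent of the descendant $\tau$) times $\tau$ evaluated at the critical point $s^*$ of $\mathcal{S}$.

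Next I would write out the critical point equations $\partial\mathcal{S}/\partial\ln s_{i,k}=0$. Using $\frac{d}{d\ln x}\mathrm{Li}_2(x)=-\ln(1-x)$, each such equation exponentiates to a rational identity among the $s$'s. Gathering the $\hbar$-powers that enter through the ratios $q s/(\hbar s')$ and unpacking the shift $z_i^\sharp = z_i(-\hbar^{1/2})^{\mathbf{v}_i'}$ produces exactly the Bethe equations stated in the theorem, one equation per pair $(i,k)$. The branch corresponding to a given fixed point $\fp$ is pinned down by continuity in $z$: at $z=0$ the Bethe system degenerates to $s_{i,j}=x_{i,j}$, matching the classical $\mathsf{T}$-equivariant eigenvalue at $\fp$. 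Since the Gaussian prefactor is independent of $\tau$, it cancels in the ratio, and formula~(\ref{eval}) yields
$$
\tau_\fp(z) = \lim_{q\to 1}\frac{V^{(\tau)}_\fp(z)}{V^{(1)}_\fp(z)} = \tau(s^*_{i,k}),
$$
which is the claim.

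The hard part will be the careful bookkeeping in the saddle point equations: one must check, node by node, that the dilogarithm derivatives coming from $E_{\rm int}$ (the contribution of $\mathscr{V}_i^{\ast}\otimes\mathscr{V}_i$), from $G_{\rm int}$ (from $\mathscr{W}_{n-1}^{\ast}\otimes\mathscr{V}_{n-1}$), and from the two neighboring edges of $H_{\rm int}$ (from $\mathscr{V}_{i\pm 1}^{\ast}\otimes\mathscr{V}_i$) combine into precisely the rational factors on the left of the Bethe equations, and that the $(-\hbar^{1/2})^{-\mathbf{v}_i'}$ prefactor on the right arises solely from the rescaling $z_i\mapsto z_i^\sharp$. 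The analytic justification of the contour deformation and the subleading Laplace estimates is identical to the $\mathfrak{sl}_2$ case already worked out in~\cite{Pushkar:2016qvw}, so no new analytic input beyond the combinatorial matching is required.
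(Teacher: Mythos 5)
Your proposal is correct in outline and would, with the indicated bookkeeping, produce the stated Bethe equations, but it is \emph{not} the route the paper actually takes. The paper explicitly acknowledges your approach as ``one way,'' crediting section~3.5 of~\cite{Pushkar:2016qvw} for the $\mathfrak{sl}_2$ prototype of the dilogarithm/saddle-point analysis of~\eqref{eval}, and then deliberately bypasses it in favor of a shortcut from~\cite{Aganagic:2017be}: one regards $TX$ as a class in $K_{\prod_i GL(V_i)\times GL(W_{n-1})}(\mathrm{pt})$ with Chern roots $s_{i,k}$ and $a_j$, computes
\[
TX=T\bigl(T^{*}\mathrm{Rep}(\mathbf{v},\mathbf{w})\bigr)-\sum_{i\in I}(1+\hbar)\,\mathrm{End}(V_i),
\]
and reads off the Bethe equations directly from the rule $\widehat{a}\bigl(s_{i,k}\tfrac{\partial}{\partial s_{i,k}}TX\bigr)=z_i$, where $\widehat{a}(\sum n_i x_i)=\prod(x_i^{1/2}-x_i^{-1/2})^{n_i}$. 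This replaces your entire Laplace-method argument (dilogarithm expansion of $\varphi$, contour justification, Gaussian prefactor cancellation) with a one-line formal recipe inherited from the general theory of quantum difference equations. What your approach buys is self-containedness: you derive the asymptotics from the explicit integral representation~\eqref{vertexint} rather than citing the abstract machinery. What the paper's approach buys is economy and a uniform statement valid for any Nakajima quiver variety, since the only input is the class of the tangent bundle.

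One place to be careful in your bookkeeping claim that the factor $(-\hbar^{1/2})^{-\mathbf{v}_i'}$ arises ``solely'' from the rescaling $z_i\mapsto z_i^\sharp=z_i(-\hbar^{1/2})^{\mathbf{v}_i'}$: the critical-point equation naturally produces $z_i^\sharp$ on the right, and substituting back gives $z_i(-\hbar^{1/2})^{+\mathbf{v}_i'}$, not $z_i(-\hbar^{1/2})^{-\mathbf{v}_i'}$. The remaining sign and $\hbar^{1/2}$ powers come from the asymmetric way $\hbar$ enters $E_{\rm int}$, $G_{\rm int}$, $H_{\rm int}$ (the numerator of each ratio carries the $q/\hbar$ shift), so there are additional $\hbar^{\pm 1/2}$ factors once the dilogarithm derivatives are exponentiated and the fractions are cleared. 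You flag this as ``the hard part,'' which is fair, but the specific claim should be softened: the net $(-\hbar^{1/2})^{-\mathbf{v}_i'}$ on the right of~\eqref{eq:XXZBetheGeom} is the combined effect of the $z\mapsto z^\sharp$ shift \emph{and} these residual $\hbar$-powers from the Pochhammer asymptotics, not the shift alone.
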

\begin{proof}
There are several ways of obtaining these equations. One way corresponds to the study of asymptotics of \eqref{eval} as it was done in section 3.5 of \cite{Pushkar:2016qvw}. However, there is a shortcut recently provided by \cite{Aganagic:2017be}. One chooses a preimage of the class $TX$ in $K_{\prod_iGL(V_i)\times GL(W_{n-1})}(pt)$ under the Kirwan map, so that  $a_j$ are coordinates of the torus acting on $W_{n-1}$ and $s_{i,k}$ are coordinates of the torus acting on $V_i$. In this case we have
\begin{eqnarray}
&&TX=T(T^{*}\text{Rep}(\mathbf{v},\mathbf{w}))-\sum_{i\in I}(1+\hbar)\text{End}(V_i)=\\
&&\sum_{i=1}^{n-2}\sum_{k=1}^{\mathbf{v}_i}\sum_{j=1}^{\mathbf{v}_{i+1}}\left(\frac{s_{i,k}}{s_{i+1,j}}+\frac{s_{i+1,j}\hbar}{s_{i,k}}\right)+\sum_{k=1}^{\mathbf{v}_{n-1}}\sum_{j=1}^{\mathbf{w}_{n-1}}\left(\frac{s_{n-1,k}}{a_j}+\frac{a_j\hbar}{s_{n-1,k}}\right)-(1+\hbar)\sum_{i\in I}\sum_{j,k=1}^{\mathbf{v}_i}\frac{s_{i,j}}{s_{i,k}}.\nonumber
\end{eqnarray}
To get Bethe equations we use the following formula from the Appendix to \cite{Aganagic:2017be}:
$$
\widehat{a}\left(s_{i,k}\frac{\partial}{\partial s_{i,k}}TX\right)=z_i,
$$
where $\widehat{a}\left(\sum n_i x_i\right)=\prod {\left(x_i^{1/2}-x_i^{-1/2}\right)}^{n_i}$.
\end{proof}


The equations (\ref{eq:XXZBetheGeom}) are Bethe ansatz equations for the periodic anisotropic $\mathfrak{gl}(n)$ XXZ spin chain on $\mathbf{w}_{n-1}$ sites with twist parameters $z_1,\dots, z_{n-1}$, impurities (shifts of spectral parameters) $a_1,\dots,a_{\mathbf{w}_{n-1}}$, and quantum parameter $\hbar$, see e.g. \cite{Bogolyubov:1993qism}, \cite{Reshetikhin:2010si}.\\

Let us consider the quantum tautological bundles $\widehat{\Lambda^kV_i}(z)$, $k=1,\dots, \mathbf{v}_i$. It is useful to construct a generating function for
those, namely
\begin{equation}
\mathbf{Q}_i(u)=\sum^{\mathbf{v}_i}_{k=0}(-1)^k u^{\mathbf{v}_i-k}\hbar^{\frac{ik}{2}}\widehat{\Lambda^kV_i}(z).
\end{equation}
The seemingly strange $\hbar$ weights will be necessary in Section 4. In the integrable system literature these operators are known as Baxter operators \cite{Baxter:1982zz},\cite{Reshetikhin:2010si}.
The following Theorem is a consequence of (\ref{eval}).
\begin{proposition}
The eigenvalues of the operator ${\bf Q}_i(u)$ are the following polynomials in $u$:
\begin{equation}\label{baxteig}
Q_i(u)=\prod^{\bf{v}_i}_{k=1}(u-\hbar^{\frac{i}{2}}s_{i,k}),
\end{equation}
so that the coefficients are elementary symmetric functions in  $s_{i,k}$ for
fixed i.
\end{proposition}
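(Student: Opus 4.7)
The plan is to reduce the proposition directly to Theorem \ref{betheth} by recognizing that $\Lambda^k V_i$ is the insertion producing an elementary symmetric polynomial in the torus weights of the tautological bundle $V_i$. First I would recall that at any $\mathsf{T}$-fixed point $\fp$, the tautological bundle $V_i$ splits as a sum of one-dimensional weight spaces whose characters are precisely $x_{i,1},\ldots,x_{i,\mathbf{v}_i}$, so that the character of $\Lambda^k V_i$ at $\fp$ equals the $k$-th elementary symmetric polynomial $e_k(x_{i,1},\ldots,x_{i,\mathbf{v}_i})$. Hence $\tau = \Lambda^k V_i$ is a symmetric polynomial insertion of precisely the type to which Theorem \ref{betheth} applies.

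Second, I would apply Theorem \ref{betheth} with $\tau(x_{i,1},\ldots,x_{i,\mathbf{v}_i}) = e_k(x_{i,1},\ldots,x_{i,\mathbf{v}_i})$. This immediately yields that the eigenvalue of $\widehat{\Lambda^k V_i}(z)$ on the Bethe eigenvector labeled by a solution $(s_{i,j})$ of \eqref{eq:XXZBetheGeom} equals $e_k(s_{i,1},\ldots,s_{i,\mathbf{v}_i})$.

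Third, I would assemble these eigenvalues into the generating function defining $\mathbf{Q}_i(u)$. Because the Baxter operator is a polynomial in $u$ with coefficients that are simultaneously diagonalizable (all quantum tautological classes commute, as they are among the generators of the commutative ring $QK_{\mathsf{T}}(X)$), the eigenvalue of $\mathbf{Q}_i(u)$ is obtained termwise:
\begin{equation*}
Q_i(u) = \sum_{k=0}^{\mathbf{v}_i} (-1)^k u^{\mathbf{v}_i - k}\, \hbar^{ik/2}\, e_k(s_{i,1},\ldots,s_{i,\mathbf{v}_i}).
\end{equation*}
Using the homogeneity $\hbar^{ik/2} e_k(s_{i,1},\ldots,s_{i,\mathbf{v}_i}) = e_k(\hbar^{i/2}s_{i,1},\ldots,\hbar^{i/2}s_{i,\mathbf{v}_i})$, the classical identity $\prod_{k=1}^{n}(u-y_k) = \sum_{k=0}^{n} (-1)^k u^{n-k} e_k(y_1,\ldots,y_n)$ applied with $y_k = \hbar^{i/2} s_{i,k}$ gives exactly \eqref{baxteig}. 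There is essentially no obstacle: the statement is a formal corollary of Theorem \ref{betheth} together with the elementary generating-function identity for $e_k$, and the only substantive content—the specific $\hbar^{ik/2}$ weighting in the definition of $\mathbf{Q}_i(u)$—is precisely what makes the Bethe roots appear shifted by $\hbar^{i/2}$ in the factorized form.
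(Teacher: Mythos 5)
Your proof is correct and takes essentially the same approach as the paper: the paper simply states that the proposition ``is a consequence of \eqref{eval}'' (equivalently, of Theorem \ref{betheth}), and your argument is precisely the unpacked version of that assertion, applying Theorem \ref{betheth} with $\tau = \Lambda^k V_i$ and then invoking the elementary-symmetric generating-function identity together with the homogeneity of $e_k$ to absorb the $\hbar^{ik/2}$ prefactors.
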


\noindent {\bf Remark.}
To obtain the full Hilbert space of a
$\mathfrak{gl}(n)$ $XXZ$ model one has to consider a disjoint union of all partial flag varieties with framing ${W}_{n-1}$ fixed, so that in the basis of fixed points the classical equivariant K-theory can be expressed as a tensor product $\mathbb{C}^n(a_1)\otimes\mathbb{C}^n(a_2)\otimes \dots   \mathbb{C}^n(a_{{\bf w}_{n-1}})$,
where each of $\mathbb{C}^n(a_i)$ is an evaluation representation of $U_{\hbar}(\widehat{\mathfrak{gl}}(n))$, see e.g. \cite{Nakajima:2001qg}.
There is a special interesting question regarding universal formulas for operators
$\mathbf{Q}_i(u)$ which we used in \cite{Pushkar:2016qvw}
for $\mathfrak{gl}(2)$ model, corresponding to prefundamental representations of the Borel subalgebra of
$U_{\hbar}(\widehat{\mathfrak{gl}}(n))$ \cite{Frenkel:2013uda}.

\subsection{Compact limit}\label{Sec:Compact}

In this section we study the limit of $\hbar\to \infty$ of the vertex functions and the Bethe equations.
We recall that $\hbar$ is the equivariant parameter of the torus which scales the cotangent directions in $T^{*}Fl_n$. Later in Section \ref{Sec:tRSToda} we also show that the limit $\hbar\to \infty$ of the ring $QK_{T}(T^{*}Fl_n)$ coincides with the quantum K-theory ring of complete flag varieties computed by Givental and Lie in \cite{2001math8105G}. It is thus natural to call   $\hbar\to \infty$ - the compact limit.

In order to make the limit $\hbar\to \infty$ of vertex functions and Bethe equations well defined, we also need to rescale the equivariant parameters $z_i$ by powers of $\hbar$. Here is the exact procedure:

\begin{theorem}
In the limit  $\hbar\to \infty$ with K\"ahler parameters $\{z\}$ scaled so that  $\{z^\sharp\}$ remains fixed

\label{betheth2}\begin{enumerate}
\item

the vertex functions \eqref{vertexint} have well defined limits. \\
\item
The limits of Bethe ansatz equations exist and take the form
\begin{align}\label{eq:XXZCompact}
\prod_{j=1}^{\mathbf{w}_{n-1}}\frac{s_{n-1,k}-a_{j}}{a_{j}}\prod_{j=1}^{\mathbf{v}_{n-2}}\frac{s_{n-1,k}}{s_{n-2,j}-s_{n-1,k}}&={z^\sharp}_{n-1}\prod\limits_{{j=1},{j\neq k}}^{\mathbf{v}_{n-1}}\frac{-s_{n-1,k}}{s_{n-1,j}}\,,\quad k=1,\ldots,\mathbf{v}_{n-1}\,,\cr
\prod_{j=1}^{\mathbf{v}_{i+1}}\frac{s_{i,k}-s_{i+1,j}}{s_{i+1,j}}\prod_{j=1}^{\mathbf{v}_{i-1}}\frac{s_{i,k}}{s_{i-1,j}-s_{i,k}}&={z^{\sharp}}_{i}\prod\limits_{{j=1},{j\neq k}}^{\mathbf{v}_{i}}\frac{-s_{i,k}}{s_{i,j}}\,,\quad k=1,\ldots,\mathbf{v}_{i}\,,\quad i=2,\ldots, n-2\,,\cr
\prod_{j=1}^{\mathbf{v}_{2}}\frac{s_{1,k}-s_{2,j}}{s_{2,j}}&={z^{\sharp}}_{1}\prod\limits_{{j=1},{j\neq k}}^{\mathbf{v}_{1}}\frac{-s_{1,k}}{s_{1,j}}\,,\quad k=1,\ldots,\mathbf{v}_{1}\,.
\end{align}
\end{enumerate}
\end{theorem}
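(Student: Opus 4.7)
The plan is to derive both claims by taking $\hbar\to\infty$ directly in the formulas already proved in the excerpt, with $z_i^\sharp = z_i(-\hbar^{1/2})^{\mathbf{v}_i'}$ held fixed as the new K\"ahler parameter. The geometric picture that guides the argument is this: the weight $\hbar$ rescales cotangent fibers, so sending $\hbar\to\infty$ suppresses the quasimap contributions whose image explores the cotangent direction; the precise shift $z\mapsto z^\sharp$ is exactly what absorbs the $\hbar$-divergences coming from the polarization term $q^{\deg(\mathscr{P})/2}$ of \eqref{virdef}.

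For part (1), I would start from the integral representation \eqref{vertexint} and locate the $\hbar$-dependence in the integrand: it sits in the denominator of $E_{\rm int}$ and in the numerators of $G_{\rm int}$ and $H_{\rm int}$, in each case inside $\varphi$'s of the form $\varphi(qx/(\hbar y))$. Because $\varphi(x)=\prod_{i\ge 0}(1-q^ix)$ satisfies $\varphi(0)=1$, all these factors tend to $1$ in the limit. The exponential prefactor depends on $\hbar$ only through $z^\sharp$, which is held fixed, so it has a finite limit. The surviving integrand is precisely the one Lemma 2.13 would produce for the compact polarization $\mathscr{P}_{\rm comp}$, i.e.\ with the $\hbar\,\mathscr{M}^{*}$ summand removed from the perfect obstruction theory of Section \ref{PNVSGO}. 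That identifies the limit with the bare vertex for the compact flag variety.

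For part (2), I would take $\hbar\to\infty$ termwise in \eqref{eq:XXZBetheGeom} with the Bethe roots $s_{i,k}$ held fixed. Each factor of the form $a-\hbar b$ is dominated by $-\hbar b$, so the leading powers of $\hbar$ can be read off by inspection. For the bulk equation $2\le i\le n-2$, the LHS acquires a net factor $(-\hbar)^{\mathbf{v}_{i-1}-\mathbf{v}_{i+1}}=(-\hbar)^{-\mathbf{v}_i'}$, while on the RHS the $\hbar$'s cancel inside the product over $j\neq k$, leaving the prefactor $z_i(-\hbar^{1/2})^{-\mathbf{v}_i'}$. Multiplying both sides by $(-\hbar)^{\mathbf{v}_i'}$ converts the prefactor to $z_i(-\hbar^{1/2})^{\mathbf{v}_i'}=z_i^\sharp$ and produces the middle equation of \eqref{eq:XXZCompact}. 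The boundary equations $i=1$ and $i=n-1$ fall out of the same bookkeeping using $\mathbf{v}_1'=\mathbf{v}_2$ and $\mathbf{v}_{n-1}'=\mathbf{w}_{n-1}-\mathbf{v}_{n-2}$ respectively.

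The main obstacle I anticipate is verifying that no further rescaling of the $s_{i,k}$ is needed for the limit to be nontrivial, i.e.\ that the dominant-balance argument is internally consistent at every node of the quiver; this is exactly the point where the definition of $\mathbf{v}_i'$ together with the boundary framing and stability data enter critically. A useful cross-check is to re-derive (2) from (1) by applying the eigenvalue formula \eqref{eval} to the limiting vertex of (1): the critical-point analysis in the spirit of the proof of Theorem \ref{betheth} (using the compact tangent class $TX_{\rm comp}=\sum\mathrm{Hom}(V_{i+1},V_i)+\mathrm{Hom}(W_{n-1},V_{n-1})-\sum\mathrm{End}(V_i)$ in place of $T(T^{*}X)$) should independently reproduce \eqref{eq:XXZCompact}, confirming the direct limit computation.
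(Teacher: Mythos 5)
Your proposal is correct and follows essentially the same route as the paper: pass to the integral representation \eqref{vertexint}, observe that all $\hbar$-dependence in $E_{\rm int},G_{\rm int},H_{\rm int}$ sits inside $\varphi(q\,\cdot/\hbar)$ factors that tend to $\varphi(0)=1$ while $z^\sharp$ is held fixed, and then obtain the Bethe equations by taking $\hbar\to\infty$ directly in \eqref{eq:XXZBetheGeom} after the substitution $z_i=z_i^\sharp(-\hbar^{1/2})^{-\mathbf{v}_i'}$. The paper's write-up additionally frames the rescaling by pairing localization weights $(\omega,\omega^{-1}\hbar)$ and showing the fiber factor contributes $(-\hbar^{1/2})/(1-\hbar^{-1}\omega^{-1})\to -\hbar^{1/2}$, which is the same bookkeeping you encoded in the observation that the $q^{\deg\mathscr{P}/2}$ polarization term is absorbed by $z\mapsto z^\sharp$; your cross-check via \eqref{eval} and the compact $TX$ is a nice consistency observation but is not needed.
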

\begin{proof}
When applying the localization theorem to compute the bare vertex for the cotangent bundle to partial flags we can break up the terms in pairs of the form $(\omega,\omega^{-1}\hbar )$. The latter corresponds to the cotangent fiber. The contribution of such a pair to the vertex will be equal to:
$$
\frac{1}{\omega^{^1/_2}-\omega^{-^1/_2}}\frac{1}{(\hbar\omega^{-1})^{^1/_2}-(\hbar\omega^{-1})^{-^1/_2}}=\frac{1}{1-\omega^{-1}}\frac{-\hbar^{^1/_2}}{1-\hbar^{-1}\omega^{-1}}.
$$
Therefore after rescaling by $(-\hbar^{^1/_2})$, which corresponds to expressing $z$ in terms of $z^{\sharp}$ will be equal to $\frac{1}{1-\omega^{-1}}$ in the $\hbar\to\infty$ limit, that is exactly the contribution of $\omega$ in the case of the partial flag variety. One can check that the resulting sum is indeed finite by looking at the integral formula for the vertex (\ref{vertexint}). Namely, the integrand in the expression for the vertex after fiber removal is as follows:
$$
E_{\rm{int}}\to \prod_{i=1}^{n-1}\prod\limits_{j,k=1}^{\mathbf{v}_i} \varphi\Big( \frac{s_{i,j}}{ s_{i,k}}\Big),
$$
$$
G_{\rm{int}}\to \prod\limits_{j=1}^{\mathbf{w}_{n-1}}\prod\limits_{k=1}^{\mathbf{v}_{n-1}} \frac{1}{\varphi\Big(\frac{s_{n-1,k}}{ a_j }\Big)},
$$
$$
H_{\rm{int}}\to \prod_{i=1}^{n-2}\prod\limits_{j=1}^{\mathbf{v}_{i+1}}\prod\limits_{k=1}^{\mathbf{v}_{i}} \dfrac{1}{\varphi\Big(\frac{s_{i,k}}{ s_{i+1,j} }\Big)}.
$$
In order to obtain the corresponding Bethe equations, one can again compute $q\to 1$ asymptotics or just simply  evaluating the limit $\hbar \to \infty$ of  (\ref{eq:XXZBetheGeom}) while expressing $z$ in terms of $z_{\sharp}$.
\end{proof}

\section{The XXZ/tRS Duality}\label{Sec:XXZ/tRS}
In this section we discuss the duality between XXZ spin chain and trigonometric Ruijsenaars-Schneider (tRS) model which first appeared in physics literature \cite{Gaiotto:2013bwa, Bullimore:2015fr}. It was there referred to as \textit{quantum/classical duality}. Here we will prove main statements of \cite{Bullimore:2015fr}.


\subsection{The XXZ Spin Chain}\label{Subsec:XXZ}
To start let us change the K\"ahler parameters in Bethe equations \eqref{eq:XXZBetheGeom} according to
\begin{align}\label{eq:zvdzeta}
z_1&=\frac{\zeta_{1}}{\zeta_2}\,,\cr
z_i&=\frac{\zeta_{i}}{\zeta_{i+1}}\,,\quad i=2,\dots, n-2\cr
z_{n-1}&=\frac{\zeta_{n-1}}{\zeta_{n}}\,.
\end{align}
In what follows we shall treat K\"ahler variables $\zeta_i$ as formal.

Additionally after rescaling Bethe roots and equivariant parameters
\begin{equation}
\sigma_{i,k}=\hbar^{\frac{i}{2}}s_{i,k}\,,\quad i=1,\dots, n-1\,,\qquad \mathsf{a}_k = \hbar^{\frac{n}{2}}a_k\,,
\label{eq:rescaling}
\end{equation}
we arrive at the following set of equations which is equivalent to \eqref{eq:XXZBetheGeom}
\begin{align}
\label{eq:BetheEqDetailed}
\frac{\zeta_{1}}{\zeta_{2}}\cdot \prod_{ \beta \neq \alpha}^{\textbf{v}_1}\frac{\hbar\sigma_{1,\alpha} - \sigma_{1,\beta}}{\hbar \sigma_{1,\beta}-\sigma_{1,\alpha}} \cdot \prod_{\beta=1}^{\textbf{v}_{2}}\frac{\sigma_{1,\alpha} - \hbar^{^1/_2} \sigma_{2,\beta}}{ \sigma_{2,\beta}-\hbar^{^1/_2}\sigma_{1,\alpha}}&=(-1)^{\delta_1}\,,\notag\\
\frac{\zeta_{i}}{\zeta_{i+1}}\cdot\prod_{\beta=1}^{\textbf{v}_{i-1}}\frac{ \sigma_{i,\alpha}- \hbar^{^1/_2}\sigma_{i-1,\beta}}{\sigma_{i-1,\beta}-\hbar^{^1/_2} \sigma_{i,\alpha}} \cdot \prod_{ \beta \neq \alpha}^{\textbf{v}_{i}}\frac{\hbar \sigma_{i,\alpha} - \sigma_{i,\beta}}{\hbar \sigma_{i,\beta}-\sigma_{i,\alpha}} \cdot \prod_{\beta=1}^{\textbf{v}_{i+1}}\frac{\sigma_{i,\alpha} - \hbar^{^1/_2} \sigma_{i+1,\beta}}{\sigma_{i+1,\beta}-\hbar^{^1/_2} \sigma_{i,\alpha}}&=(-1)^{\delta_i}\,,\\
\frac{\zeta_{n-1}}{\zeta_{n}}\cdot\prod_{\beta=1}^{\textbf{v}_{n-2}}\frac{ \sigma_{n-1,\alpha}- \hbar^{^1/_2}\sigma_{n-2,\beta}}{ \sigma_{n-2,\beta}-\hbar^{^1/_2}\sigma_{n-1,\alpha}} \cdot \prod_{ \beta \neq \alpha}^{\textbf{v}_{n-1}}\frac{\hbar\sigma_{n-1,\alpha} -  \sigma_{n-1,\beta}}{\hbar \sigma_{n-1,\beta}- \sigma_{n-1,\alpha}} \cdot \prod_{\beta=1}^{\textbf{w}_{n-1}}\frac{ \sigma_{n-1,\alpha} - \hbar^{^1/_2}\mathsf{a}_{\beta}}{ \mathsf{a}_{\beta}-\hbar^{^1/_2}\sigma_{n-1,\alpha}}&=(-1)^{\delta_{n-1}}\,,\notag
\end{align}
where in the middle equation $i=2,\dots, n-2$ and $\delta_i =\textbf{v}_{i-1}+\textbf{v}_{i}+\textbf{v}_{i+1}-1$.  The reader may notice that we use slightly non-standard notation for Bethe equations, in particular, parameters $a_\beta$ appear in the last equation $i=n-1$ (instead of the first equation). Sign factors $(-1)^{\delta_i}$ in the right hand sides are artifacts of this choice. However, as we saw in the previous section this way of writing the equations is more convenient from geometric point of view.
Later we shall see that this framework will be convenient in the derivation of the Lax matrix of the trigonometric Ruijsenaars-Schneider model.

Meanwhile, if we denote $\textbf{v}_0=0\,,\textbf{v}_{n}=\textbf{w}_{n-1}$, $\sigma_{n,\beta}=\mathsf{a}_\beta$ for $\beta=1,\dots, \textbf{w}_{n-1}$ then \eqref{eq:BetheEqDetailed} can be written more uniformly as follows
\begin{equation}
\frac{\zeta_{i}}{\zeta_{i+1}}\prod_{\beta=1}^{\textbf{v}_{i-1}}\frac{\sigma_{i,\alpha}-\hbar^{^1/_2} \sigma_{i-1,\beta}}{ \sigma_{i-1,\beta}-\hbar^{^1/_2}\sigma_{i,\alpha}} \cdot \prod_{ \beta \neq \alpha}^{\textbf{v}_{i}}\frac{\hbar\sigma_{i,\alpha} -  \sigma_{i,\beta}}{\hbar\sigma_{i,\beta}- \sigma_{i,\alpha}} \cdot \prod_{\beta=1}^{\textbf{v}_{i+1}}\frac{ \sigma_{i,\alpha} - \hbar^{^1/_2}\sigma_{i+1,\beta}}{ \sigma_{i+1,\beta}-\hbar^{^1/_2}\sigma_{i,\alpha}}=(-1)^{\delta_i}\,.
\label{eq:XXZGen}
\end{equation}


Following (\ref{baxteig}) let us write eigenvalues $Q_i(u)$ of Baxter operators in terms of the new variables and complement it with $Q_n(u)$, being the generating function for elementary symmetric functions of equivariant parameters.
\begin{equation}
Q_i(u) = \prod_{\alpha=1}^{\textbf{v}_i}\left(u - \sigma_{i,\alpha}\right)\,,\qquad P(u) = Q_n(u)= \prod_{\alpha=1}^{\textbf{w}_{n-1}}(u - \mathsf{a}_\alpha)\,.
\label{eq:QPBaxter}
\end{equation}
In addition, we define shifted polynomials when their arguments are multiplied by $\hbar^{-\frac{1}{2}}$ to the corresponding power:
 $Q_i^{(n)}(u)=Q_i(\hbar^{-\frac{n}{2}} u)$, etc.

Then Bethe equations \eqref{eq:XXZGen} can be expressed in terms of these polynomials as follows
\begin{lemma}
The equation for Bethe root $\sigma_{i,\alpha}$ in \eqref{eq:XXZGen} 
arises by setting $u = \sigma_{i,\alpha}$
in the equation below
\begin{equation}
\hbar^{\frac{\Delta_i}{2}} \frac{\zeta_{i}}{\zeta_{i+1} } \frac{Q_{i-1}^{(1)} Q_{i}^{(-2)} Q_{i+1}^{(1)}}{Q_{i-1}^{(-1)} Q_{i}^{(2)} Q_{i+1}^{(-1)}} = -1\,,
\label{eq:TQBaxter}
\end{equation}
where $\Delta_i = \textbf{v}_{i+1}+\textbf{v}_{i-1}-2\textbf{v}_{i}$.
\end{lemma}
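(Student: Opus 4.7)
The plan is to verify the lemma by direct substitution and comparison: one plugs $u = \sigma_{i,\alpha}$ into the T-Q equation \eqref{eq:TQBaxter} and shows that the resulting identity is exactly the Bethe equation \eqref{eq:XXZGen} for this root, once one carefully tracks signs and powers of $\hbar$.

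First I would write out each of the six $Q$-factors at $u = \sigma_{i,\alpha}$ using the factorized form \eqref{eq:QPBaxter}, obtaining
\[
Q_j^{(\pm 1)}(\sigma_{i,\alpha}) = \prod_{\beta=1}^{\textbf{v}_j}\bigl(\hbar^{\mp 1/2}\sigma_{i,\alpha} - \sigma_{j,\beta}\bigr),\qquad Q_i^{(\pm 2)}(\sigma_{i,\alpha}) = \prod_{\beta=1}^{\textbf{v}_i}\bigl(\hbar^{\mp 1}\sigma_{i,\alpha} - \sigma_{i,\beta}\bigr).
\]
The ratio $Q_i^{(-2)}/Q_i^{(2)}$ requires splitting off the $\beta=\alpha$ factor (which would be $0/0$ if not isolated): it gives $(\hbar-1)/(\hbar^{-1}-1) = -\hbar$. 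The remaining $\beta\neq\alpha$ factors combine via the elementary identity $\hbar\sigma_{i,\alpha}-\sigma_{i,\beta} = -\hbar(\hbar^{-1}\sigma_{i,\alpha}-\sigma_{i,\beta})\cdot\tfrac{1}{\hbar\sigma_{i,\beta}-\sigma_{i,\alpha}}\cdot(\hbar\sigma_{i,\beta}-\sigma_{i,\alpha})$; more usefully, one checks directly that
\[
\frac{\hbar\sigma_{i,\alpha}-\sigma_{i,\beta}}{\hbar^{-1}\sigma_{i,\alpha}-\sigma_{i,\beta}} = -\hbar\,\frac{\hbar\sigma_{i,\alpha}-\sigma_{i,\beta}}{\hbar\sigma_{i,\beta}-\sigma_{i,\alpha}},
\]
so that $Q_i^{(-2)}/Q_i^{(2)}\bigr|_{\sigma_{i,\alpha}} = (-\hbar)^{\textbf{v}_i}$ times the $\hbar$-self-coupling product that appears in \eqref{eq:XXZGen}.

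Next I would perform the analogous manipulations on the two neighbour ratios. The same kind of algebraic identity
\[
\frac{\hbar^{-1/2}\sigma_{i,\alpha}-\sigma_{j,\beta}}{\hbar^{1/2}\sigma_{i,\alpha}-\sigma_{j,\beta}} = -\hbar^{-1/2}\,\frac{\sigma_{i,\alpha}-\hbar^{1/2}\sigma_{j,\beta}}{\sigma_{j,\beta}-\hbar^{1/2}\sigma_{i,\alpha}}
\]
(used for $j=i\pm 1$) converts each of $Q_{i\pm 1}^{(1)}/Q_{i\pm 1}^{(-1)}\bigr|_{\sigma_{i,\alpha}}$ into $(-\hbar^{-1/2})^{\textbf{v}_{i\pm 1}}$ times exactly the cross-coupling product appearing in \eqref{eq:XXZGen}. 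Multiplying the three ratios together, the total prefactor is
\[
(-\hbar^{-1/2})^{\textbf{v}_{i-1}}\,(-\hbar)^{\textbf{v}_i}\,(-\hbar^{-1/2})^{\textbf{v}_{i+1}} = (-1)^{\textbf{v}_{i-1}+\textbf{v}_i+\textbf{v}_{i+1}}\,\hbar^{\textbf{v}_i - (\textbf{v}_{i-1}+\textbf{v}_{i+1})/2}.
\]

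Finally I would collect: the power of $\hbar$ is $-\Delta_i/2$, so it cancels precisely against the $\hbar^{\Delta_i/2}$ in \eqref{eq:TQBaxter}, while the sign is $(-1)^{\delta_i+1}$ because $\delta_i = \textbf{v}_{i-1}+\textbf{v}_i+\textbf{v}_{i+1}-1$. Therefore \eqref{eq:TQBaxter} at $u=\sigma_{i,\alpha}$ reduces to $\tfrac{\zeta_i}{\zeta_{i+1}}\cdot(\text{XXZ product}) = (-1)^{\delta_i}$, which is \eqref{eq:XXZGen}. The only real obstacle is the bookkeeping of signs and half-integer $\hbar$-powers; the identity is otherwise a mechanical rearrangement of the factorized $Q$'s, and the $\hbar^{\Delta_i/2}$ prefactor in \eqref{eq:TQBaxter} is designed exactly to absorb the shift weights coming from the three ratios.
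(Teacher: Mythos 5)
Your proof is correct and is the intended direct verification: the paper states this Lemma without a written proof (only the remark that the sign $\delta_i$ drops out), and your computation by factorization of the $Q$-polynomials, splitting off the $\beta=\alpha$ factor of $Q_i^{(-2)}/Q_i^{(2)}$, and rewriting each ratio via $\hbar^{\mp 1/2}\sigma_{i,\alpha}-\sigma_{j,\beta}$ identities is exactly the calculation that makes the $\hbar^{\Delta_i/2}$ prefactor cancel and the sign $(-1)^{\delta_i}$ emerge. One small caveat: the aside claiming the $\beta=\alpha$ factor ``would be $0/0$ if not isolated'' is not accurate---since the arguments of $Q_i^{(\pm 2)}$ are $\hbar^{\mp 1}\sigma_{i,\alpha}\neq\sigma_{i,\alpha}$, that factor equals $(\hbar-1)/(\hbar^{-1}-1)=-\hbar$ and is never an indeterminate form---but this is merely a misleading remark and does not affect the calculation, which is otherwise exact.
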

Note that sign $\delta_i$ disappeared.

In order to proceed further we need to rewrite \eqref{eq:TQBaxter} in a slightly different way.

\begin{proposition}\label{eq:PropQQtilde}
Suppose that $\zeta_{i+1}\notin \hbar^{\mathbb{N}} \zeta_{i}$ for all $i$. Then the system of equations \eqref{eq:TQBaxter} upon change of parameters $\widetilde \zeta_i = \zeta_i \, \hbar^{-\sum_{j=1}^{i-1}\frac{\Delta_j}{2}}$ is equivalent to the existence of auxiliary polynomials $\widetilde Q_i$ of degrees $\textbf{v}_{i-1}-\textbf{v}_{i}+\textbf{v}_{i+1}$ satisfying the following system of equations
\begin{equation}
\widetilde \zeta_{i+1} Q^{(1)}_i \widetilde Q^{(-1)}_i - \widetilde \zeta_{i} Q^{(-1)}_i \widetilde Q^{(1)}_i = (\widetilde \zeta_{i+1}-\widetilde \zeta_i)Q_{i-1}Q_{i+1}\, ,
\label{eq:QQrelations}
\end{equation}
The polynomials $\widetilde Q_i(u)$ are unique.
\end{proposition}

\begin{proof}  
Let $g(z)=\widetilde Q_i(z)/Q_i(z)$ and $f(z)=(\widetilde\zeta_{i+1}-\widetilde\zeta_{i})Q_{i-1}^{(1)}Q_{i+1}^{(1)}$ so that \eqref{eq:QQrelations} can be written as
\begin{equation}\label{e:gf}
\widetilde\zeta_{i+1}g_i(z)-\widetilde\zeta_i g^{(2)}_i(z)=\frac{f(z)}{Q_{i}(z)Q^{(2)}_i(z)}.
\end{equation}
Then we have the following partial fraction decompositions
\begin{equation} \begin{aligned}
&\frac{f(z)}{Q_{i}(z)Q^{(2)}_i(z)}=h(z)-\sum_a\frac{b_a}{z-\sigma_{i,a}}+\sum_a\frac{c_a}{\hbar^{-1} z-\sigma_{i,a}},\\
&g_i(z)=\tilde{g}_i(z)+\sum_a\frac{d_a}{z-\sigma_{i,a}}
\end{aligned}
\end{equation}
where $h(z)$ and $\tilde{g}_i(z)$ are polynomials.  In order for the
residues at each $\sigma_{i,a}$ to match on both sides of \eqref{e:gf},
one needs
\begin{equation}d_a=\frac{b_a}{\widetilde\zeta_{i}}=\frac{c_a}{\widetilde\zeta_{i+1}}.
\end{equation}
The second equality is merely the Bethe equations \eqref{eq:TQBaxter} in
the alternate form
\begin{equation}
 \text{Res}_{\sigma_{i,a}}\left[\frac{f(z)}{\widetilde\zeta_{i+1} Q_i(z)Q^{(2)}_i(z)}\right]+ \text{Res}_{\sigma_{i,a}}\left[\frac{f^{(-2)}(z)}{\widetilde\zeta_{i} Q^{(-2)}_i(z)Q_i(z)}\right]=0\,,
\end{equation}
or, equivalently,
\begin{equation}
\left(\left.\frac{Q^{(1)}_{i-1}Q^{(1)}_{i+1}}{\widetilde\zeta_{i+1}
    Q^{(2)}_i}+\frac{Q^{(-1)}_{i-1}Q^{(-1)}_{i+1}}{\widetilde\zeta_{i}Q^{(-2)}_i}\right)\right|_{\sigma_{i,a}}=0.
\end{equation} 
Next, to solve for the polynomial $\tilde{g}_i(z)$, set
$\tilde{g}_i(z)=\sum r_jz^j$ and $h(z)=\sum s_j z^j$.  We then obtain
the equations $r_j(\widetilde\zeta_{i+1}\hbar^{-j} -\widetilde\zeta_i )=s_j$.  Our assumptions
on the $\zeta$'s imply that these equations are always solvable.
Thus, there exist polynomials $\widetilde{Q}_i(z)$ satisfying
\eqref{eq:QQrelations} if and only if the Bethe equations hold.  The
uniqueness of $\widetilde{Q}_i(z)$ follows from the uniqueness of the coefficients of $\tilde{g}_i(z)$.
\end{proof}


\noindent {\bf Remark.} It is worth noting, that the operators, whose eigenvalues are $\widetilde{Q}_i$ are not just auxiliary, but have a geometric meaning. Namely, they correspond to the generating functions of quantum tautological classes of exterior powers of the flop flag variety, i.e.  $V^{\vee}_i$, so that the sequence $0\to V_i\to W_{n-1}\to V_i^{\vee}\to 0$ is exact.\\

From now on let us study solutions of Bethe equations corresponding to complete flag, namely for $\textbf{v}_{i}=i$ in \eqref{eq:XXZGen} and $\textbf{w}_{n-1}=n$.
\begin{proposition}
The system of equations \eqref{eq:TQBaxter} for $\textbf{v}_i=i$ is equivalent to the following system of equations
\begin{equation}
\zeta_{i+1} Q^{(1)}_i \widetilde Q^{(-1)}_i -  \zeta_{i} Q^{(-1)}_i \widetilde Q^{(1)}_i = (\zeta_{i+1}-\zeta_i)Q_{i-1}Q_{i+1}\,.
\label{eq:QQrelationsComplete}
\end{equation}
\end{proposition}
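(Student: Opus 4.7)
The plan is to obtain this as an immediate specialization of Proposition \ref{eq:PropQQtilde}, which equates \eqref{eq:TQBaxter} with the bilinear $QQ$-system \eqref{eq:QQrelations} expressed in the rescaled K\"ahler parameters $\widetilde\zeta_i = \zeta_i \, \hbar^{-\sum_{j=1}^{i-1}\Delta_j/2}$, where $\Delta_j = \textbf{v}_{j+1}+\textbf{v}_{j-1}-2\textbf{v}_j$. In the complete flag setup $\textbf{v}_i = i$ (with the boundary conventions $\textbf{v}_0 = 0$ and $\textbf{v}_n = \textbf{w}_{n-1} = n$) a direct computation gives $\Delta_i = (i+1) + (i-1) - 2i = 0$ for every $i = 1,\dots,n-1$, including the two boundary indices $i=1$ and $i=n-1$. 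Consequently $\widetilde\zeta_i = \zeta_i$ for all $i$, and \eqref{eq:QQrelations} collapses to precisely \eqref{eq:QQrelationsComplete}.

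It is also worth recording the compatibility of the polynomial degrees. Proposition \ref{eq:PropQQtilde} produces auxiliary polynomials $\widetilde Q_i$ of degree $\textbf{v}_{i-1}-\textbf{v}_i+\textbf{v}_{i+1}$, which specializes to $(i-1)-i+(i+1) = i = \deg Q_i$. Hence \eqref{eq:QQrelationsComplete} makes sense as an identity between polynomials of balanced degrees on both sides, consistent with the Hirota-like form expected in the complete flag / trigonometric Ruijsenaars-Schneider regime.

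I do not anticipate any real obstacle: the statement is essentially a corollary of Proposition \ref{eq:PropQQtilde}. If a self-contained argument were preferred, one could replay the proof of Proposition \ref{eq:PropQQtilde} verbatim for $\textbf{v}_i = i$ -- shifting the argument in \eqref{eq:QQrelationsComplete} by $\hbar^{\pm 1}$, evaluating the two shifted equations at the roots of the unshifted $Q_i(u)$, and taking their ratio reproduces \eqref{eq:TQBaxter}. The vanishing of every $\Delta_i$ is precisely what guarantees that no stray powers of $\hbar$ appear between $\zeta_i$ and $\widetilde\zeta_i$ in this specialization.
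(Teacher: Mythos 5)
Your argument matches the paper's own reasoning exactly: for $\textbf{v}_i = i$ one has $\Delta_i = 0$, hence $\widetilde\zeta_i = \zeta_i$, and \eqref{eq:QQrelationsComplete} is just the specialization of \eqref{eq:QQrelations} from Proposition \ref{eq:PropQQtilde}. The added degree check is a harmless sanity verification; the proof is correct and essentially identical to the paper's.
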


Indeed, in this case $\Delta_i=0$ and $\widetilde{z}_i=\zeta_i$.

\subsection{Construction of the tRS Lax Matrix}
Here we shall find a general solution for $Q_i$ (the so-called Baxter polynomials) which solves \eqref{eq:QQrelationsComplete}. First we need to prove the following (variables $\gamma_1,\dots,\gamma_{k-1}$ are assumed to be formal variables)

\begin{lemma}\label{Th:existencePoly}
 Let $f_1,\dots,f_{k-1}$ be
  polynomials that do not vanish at $0$,
  and let $g$ be an arbitrary polynomial.
   Then there exists unique polynomial  $f_k$ satisfying 
\begin{equation}\label{Fmatrix}
g=\det\begin{pmatrix} f_1 & \gamma_{1} f_1^{(-2)} & \cdots &
  \gamma_{1}^{k-1} f_1^{(-2-2k)} \\ \vdots & \vdots & \ddots & \vdots \\
  f_k  & \gamma_{k} f_k^{(-2)} & \cdots & \gamma_k^{k-1} f_k^{(-2-2k)} 
\end{pmatrix}
\end{equation}
where the numbers in the parentheses in the superscripts denote multiplicative shifts of the argument of the corresponding polynomials, i.e. $f_i^{(-2)}(u)=f_i(\hbar u)$. Moreover, if $g(0)\ne 0$, then $f_k(0)\ne 0$.
 \end{lemma}

\begin{proof} 
Let $V(\gamma_1,\dots,\gamma_k)$ denote the $k\times k$ Vandermonde
matrix.  
\begin{equation}\label{eq:Vandmatrix}
V(\gamma_1,\dots,\gamma_k)=\begin{bmatrix} \, 1 & \gamma_{i_1} & \cdots & \gamma_{i_1}^{j-1} \\ \vdots & \vdots & \ddots & \vdots \\ 1 & \gamma_{i_j} & \cdots & \gamma_{i_j}^{j-1} \,  \end{bmatrix}\,.
\end{equation}
We recall that this determinant is nonzero if
and only if the $\gamma_i$'s are distinct.

Set $f_j(z)=\sum a_{ji} z^i$ and $g(z)=\sum b_i z^i$,
   and let $F$ denote the matrix in \eqref{Fmatrix}.  We show that we
   can find $a_{kj}$'s recursively.  Expanding by minors
   along the bottom row, we get $g=\sum_{j=1}^k (-1)^{k+j}\det
   F_{k,j}f_k^{(j-1)}$.  First we equate the constant terms.  This
   gives
$$
b_0=a_{k0}\left(\prod_{j=1}^{k-1}a_{j0}\right)\sum_{j=1}^k  (-1)^{k+j}\gamma_k^{j-1}\det
   V(\gamma_1,\dots,\gamma_k)_{k,j}=a_{k0}\left(\prod_{j=1}^{k-1}a_{j0}\right)\det
   V(\gamma_1,\dots,\gamma_k)\,.
$$ 
Since the $\gamma_j$'s are distinct,
   the Vandermonde determinant is nonzero.  Moreover, $a_{j0}\ne 0$
   for $j=1,\dots, k-1$.  Thus, we can solve uniquely for $a_{k0}$.
   In particular, if $b_0=0$, then $a_{k0}=0$.

Now we need to make an inductive step. Assume that we have found unique $a_{kr}$
   for $r<s$ such that the polynomial equation \eqref{Fmatrix} has
   equal coefficients up through degree $s-1$.  We now look at the
   coefficient of $z^s$.  The only way that $a_{ks}$ appears in this
   coefficient is through the constant terms of the minors $F_{k,j}$.
   To be more explicit, equating the coefficient in front of $z^s$ in
   \eqref{Fmatrix} expresses $c \,a_{ks}$ as a polynomial in known
   quantities, where
\begin{align*} c&=\left(\prod_{j=1}^{k-1}a_{j0}\right)\sum_{j=1}^k  (-1)^{k+j}(\hbar^{-s}\gamma_k)^{j-1}\det
   V(\gamma_1,\dots,\gamma_{k-1},\hbar^{-s}\gamma_k)_{k,j}\\&=\left(\prod_{j=1}^{k-1}a_{j0}\right)\det
   V(\gamma_1,\dots,\gamma_{k-1},\hbar^{-s}\gamma_k).
 \end{align*}
The condition on the $\gamma_j$'s implies that the Vandermonde
determinant is nonzero, so there is a unique solution for $a_{ks}$. 
 \end{proof}

\begin{proposition}\label{Th:QQVander}
Given polynomials $Q_j, \widetilde Q_j$ for $j=1,\dots,n$ satisfying \eqref{eq:QQrelationsComplete}, there exist unique monic degree one polynomials $q_1,\dots q_n$ such that
\begin{equation}
Q_j(u)= \frac{\text{det}\Big( M_{1,\ldots,j} \Big)}{\text{det}\Big( V_{1,\ldots,j} \Big)}\,,
\qquad
 \widetilde Q_j(u)= \frac{\text{det}\Big( M_{1,\ldots,j-1,j+1} \Big)}{\text{det}\Big( V_{1,\ldots,j-1,j+1} \Big)}\,,
\label{eq:QPolyM}
\end{equation}
where
\begin{equation}
M_{i_1,\ldots,i_j} = \begin{bmatrix} \,  q_{i_1}^{(j-1)} & \zeta_{i_1} q_{i_1}^{(j-3)} & \cdots & \zeta_{i_1}^{j-1} q_{i_1}^{(1-j)} \\ \vdots & \vdots & \ddots & \vdots \\  q_{i_j}^{(j-1)}  & \zeta_{i_j} q_{i_j}^{(j-3)} & \cdots & \zeta_{i_j}^{j-1} q_{i_j}^{(1-j)} \, \end{bmatrix}\,,
\qquad
V_{i_1,\ldots,i_j} =V(\zeta_{i_1},\dots,\zeta_{i_j})\,,
\label{eq:MM0Ind}
\end{equation}
where the Vandermonde matrix in the last equation is given by \eqref{eq:Vandmatrix}.
\end{proposition}

\begin{proof}
Let us observe that since $P$ and $Q_k$ \eqref{eq:QPBaxter} do not vanish at $0$ since Bethe roots and equivariant parameters are formal variables: 
$Q_k(0)\ne 0$ for all $k$.  This implies that $\widetilde Q_k(0)\ne 0$ for all $k$ as well; otherwise, by  \eqref{eq:QQrelationsComplete}, either $Q_{k-1}$ or $Q_{k+1}$ would vanish at $0$.

One can then see that the desired structure of Baxter polynomials $Q_k$ and $\widetilde Q_k$ emerges if we solve the equations iteratively. From the first equation from \eqref{eq:QQrelationsComplete} we get
\begin{equation}
Q_2 = \frac{\zeta_2 Q^{(1)}_1 \widetilde Q^{(-1)}_1 - \zeta_1 Q^{(-1)}_1 \widetilde Q^{(1)}_1}{\zeta_2-\zeta_1}\,.
\label{eq:Q2baseInduction}
\end{equation}
In what follows we relabel $Q_1=q_1$ and $\widetilde Q_1=q_2$. From the formula it is obvious that both polynomials $q_1$ and $q_2$ are monic of degree one:
$$
q_1 = u - p_1,\qquad q_2 = u - p_2\,,
$$
where due to the above reasoning their roots $p_1$ and $p_2$ are nonzero complex numbers.



Next, suppose that for $2\le k\le n-1$, we have shown that there exist
unique polynomials $q_1,\dots,q_{k}$ such the formulae for
$Q_j$ (resp. $\widetilde{Q}_j$) in \eqref{eq:QPolyM} hold for
$1\le j\le k$ (resp. $1\le j\le k-1$).  Furthermore, assume that none
of these polynomials vanish at $0$.  We will show that there exists a
unique $q_{k+1}$ such that the formulae for $Q_{k+1}$ and
$\widetilde{Q}_{k}$ hold and that $q_{k}(0)\ne 0$. This will
prove the lemma.

We use Lemma~\ref{Th:existencePoly} to define $q_{k+1}$.  In the
notation of that lemma, set $f_j=q_{j}^{(k)}$ and
$\gamma_j=\zeta_j$ for $1\le j\le k$, and set
$g=(\det
V_{1,\dots,k})\widetilde{Q}_{k}$.  
By the inductive assumption $f_j(0)\ne 0$ for $1\le j\le k$, so there exists a
unique $f_{k}$ satisfying \eqref{Fmatrix}.  Moreover, $g(0)\ne 0$,
so $f_{k}\ne 0$.  It is now clear that
$q_{k+1}=f_{k+1}^{(k+1)}$ is the unique polynomial
satisfying the formula in \eqref{eq:QPolyM} for
$\widetilde{Q}_{k}$.  Clearly $q_{k+1}(0)\ne 0$.

To complete the inductive step, it remains to show that the formula
for $Q_{k+1}$ is satisfied.  
Recall that
\begin{equation}
\text{det}(\zeta_i^{j-1})=\prod_{1\leq i<j\leq k}(\zeta_i-\zeta_j)\,,\qquad i,j=1,\dots,k\,,
\end{equation}
which will be also the value for $\text{det}(V_{i_1,\dots,i_k})$ and
\begin{equation}
\text{det}(V_{i_1,\dots,i_{k-1},i_{k+1}})=\prod_{1\leq i<j\leq k-1}(\zeta_i-\zeta_j)\prod_{l=1}^{k-1}(\zeta_l-\zeta_{k+1})\,,\qquad i,j=1,\dots,k\,.
\end{equation}
Now we can plug in $Q$-polynomials from \eqref{eq:QPolyM} into \eqref{eq:QQrelationsComplete}, which we want to verify.
Using the above formulae for the Vandermonde determinants we see that \eqref{eq:QQrelationsComplete} is reduced to
\begin{equation}
\zeta_{k+1} \text{det}\,M^{(1)}_{1,\ldots,k}\cdot \text{det}\,M^{(-1)}_{1,\ldots,k-1,k+1}-  \zeta_k \text{det}\,M^{(-1)}_{1,\ldots,k}\cdot\text{det}\, M^{(1)}_{1,\ldots,k-1,k+1}= \text{det} \,M_{1,\ldots,k-1}\cdot\text{det}\,M_{1,\ldots,k+1}\,,
\label{eq:MMrelationsComplete}
\end{equation}
where we recall that the numbers in the parentheses denote multiplicative shifts of the argument of $q$-polynomials by $\hbar^{-\frac{1}{2}}$.
We will now prove that \eqref{eq:MMrelationsComplete} is equivalent to the Desnanot-Jacobi\footnote{Desnanot-Jacobi-Dodgson/Lewis Caroll identity} determinant identity for matrix $M_{1,\ldots,j+1}$, which can be written as follows
\begin{equation}
\text{det}\,M_{k-1}^1\cdot \text{det}\,M_{k}^{k}- \text{det}\,M_{k}^1\cdot \text{det}\,M_{k-1}^{k}= \text{det} \,M_{1,k}^{k-1,k}\cdot\text{det}\,M\,.
\label{eq:JacobiNew}
\end{equation}
Here we denoted $M=M_{1,\ldots,k+1}$, $(k+1)\times(k+1)$ matrix of the form \eqref{eq:MM0Ind}, and $M_{b}^{a}$ is a submatrix which is obtained from $M$ by removing $a$-th row and $b$-th column. Note that \eqref{eq:MMrelationsComplete} has shifts of the $q$-polynomials, whereas \eqref{eq:JacobiNew} does not. However, due to the periodic structure in the columns we can relate shifted $k\times k$ matrices from \eqref{eq:MMrelationsComplete} with submatrices of $M$. Moreover, one can see that
\begin{equation}
M^{(1)}_{1,\ldots,k}=M_k^k\,,\qquad M^{(1)}_{1,\ldots,k-1,k+1} = M_{k-1}^k\,,
\end{equation}
but other matrices do not match directly, albeit they look similar. Let us multiply both sides of \eqref{eq:MMrelationsComplete}  by $\prod_{l=1}^{k-1}\zeta_l$. Then we can absorb this product on the left into matrices $M^{(-1)}_{1,\ldots,k-1,k+1}$ and $M^{(-1)}_{1,\ldots,k}$ by multiplying each of its first $k-1$ rows by $\zeta_i,\,i=1,\dots,k-1$; while on the right we absorb it into matrix $M_{1,\ldots,k-1}$. Additionally in the left hand side we absorb $\zeta_{k+1}$ into the last row of $M^{(-1)}_{1,\ldots,k-1,k+1}$ and $\zeta_k$ into the last row of $M^{(-1)}_{1,\ldots,k}$. To summarize
\begin{equation}
\zeta_{k+1}\prod_{l=1}^{k-1}\zeta_l\,\cdot\text{det}\,M^{(-1)}_{1,\ldots,k-1,k+1}=\text{det}\,M_{k-1}^1\,,\qquad \zeta_k \prod_{l=1}^{k-1}\zeta_l\cdot\text{det}\,M^{(-1)}_{1,\ldots,k}=\text{det}\,M_{k}^1\,,
\end{equation}
and
\begin{equation}
\prod_{l=1}^{k-1}\zeta_l\cdot\text{det} \,M_{1,\ldots,k-1}=\text{det} \,M_{1,k}^{k-1,k}\,,
\end{equation}
so \eqref{eq:MMrelationsComplete} is equivalent to \eqref{eq:JacobiNew}. Therefore $Q\widetilde{Q}$ relations \eqref{eq:QQrelationsComplete} are equivalent to the Desnanot-Jacobi identity provided that \eqref{eq:QPolyM,eq:MM0Ind} hold.
\end{proof}

Finally we are ready to prove the main theorem which relates XXZ Bethe equations with trigonometric RS model.

\begin{theorem}\label{Th:XXZ/tRS}
Let $L$ be the following matrix
\begin{equation}
L_{ij} = \frac{\prod\limits_{k \neq j}^n \left(\hbar^{-^1/_2} \zeta_i \,  - \hbar^{^1/_2} \zeta_k \,  \right) }{\prod\limits^n_{k \neq j} \left( \zeta_j-\zeta_k\right) }   p_i\,.
\label{eq:tRSLAX}
\end{equation}
Then for each eigenvector of the operator of quantum multiplication by 
\begin{equation}\label{eq:MomentaQuatnumMult}
 - \frac{\mathbf{Q}_i(0) }{\mathbf{Q}_{i-1}(0) }=\hbar^{i-\frac{1}{2}}\widehat{\Lambda^iV_i}(z)\circledast\widehat{\Lambda^{i-1}{V^*}_{i-1}}(z)\,,\quad i=1,\dots,n
\end{equation}
the corresponding eigenvalue defines a unique solution of   
\begin{equation}
P(u) = \text{det}\Big(u - L \Big)\,,
\label{eq:tRSW}
\end{equation}
where $P(u)$ is given by \eqref{eq:QPBaxter}. This correspondence establishes a bijection between solutions of \eqref{eq:tRSW} and the above eigenvectors.
\end{theorem}


\begin{proof}
Using Proposition \ref{Th:QQVander} we can put $j=n$ in \eqref{eq:QPolyM}
\begin{equation}
P(u)= \frac{\text{det}\Big( M_{1,\ldots,n} \Big)}{\text{det}\Big( V_{1,\ldots,n} \Big)}\,.
\end{equation}
Let us multiply $i$th column of $M_{1,\ldots,n}$ by $\hbar^{\frac{2i-n-1}{2}}$. Since $\prod_{i=1}^n \hbar^{\frac{2i-n-1}{2}}=1$ the determinant of this matrix will remain unchanged, however, each matrix element will now contain a monic polynomial in $u$ of degree one, while the multiplicative shifts will be applied to its coefficients $p_i$. Let us call this matrix $M'_{1,\ldots,n}(u)$. Notice that
\begin{equation}
M'_{1,\ldots,n}(u) = V_{1,\ldots,n} \cdot u + M'_{1,\ldots,n}(0)\,.
\end{equation}
We can now simplify the formulae by inverting Vandermonde matrix $V_{1,\ldots,n}$ as follows
\begin{equation}
P(u)=\text{det}\left(u\cdot 1-L\right)\,, 
\label{eq:BaxterFlavorPol}
\end{equation}
where 
\begin{equation}
L=-M'_{1,\ldots,n}(0)\cdot\Big( V_{1,\ldots,n} \Big)^{-1}\,.
\label{eq:tRSLaxMatrix}
\end{equation}
Straightforward computation shows that $L$ is provided by \eqref{eq:tRSLAX}. Indeed, the inverse of the Vandermonde matrix reads
\begin{equation}
(V_{1,\dots, n}^{-1})_{t,j} = (-1)^{t+j}\frac{S_{n-t,j}(\zeta_1,\dots,\zeta_n)}{\prod\limits_{l\neq j}^n(\zeta_j-\zeta_l)}\,,
\end{equation}
where 
$$
S_{k,j}(\zeta_1,\dots,\zeta_n)= S_k(\zeta_1,\dots,\zeta_{j-1},\zeta_{j+1},\zeta_n)\,,
$$
and 
$$
S_k(\zeta_1,\dots,\zeta_n)=\sum\limits_{1\leq i_1\leq \dots \leq i_k \leq n}^n \zeta_{i_1}\cdots\zeta_{i_k}\,.
$$
Then we have 
$$
\left(-M'_{1,\ldots,n}(0)\right)_{i, t} = \hbar^{\frac{n+1-2t}{2}}p_i\,.
$$
Thus, according to \eqref{eq:tRSLaxMatrix}
$$
L_{i,j} = \sum_{t=1}^n  \frac{(-1)^{t+j}\hbar^{\frac{n+1-2t}{2}}S_{n-t,j}(\zeta_1,\dots,\zeta_n)}{\prod\limits_{l\neq j}^n(\zeta_j-\zeta_l)} p_i =  \frac{\prod\limits_{m \neq j}^n \left(\hbar^{-^1/_2} \zeta_i \,  - \hbar^{^1/_2} \zeta_m \,  \right) }{\prod\limits_{l\neq j}^n(\zeta_j-\zeta_l)}   p_i\,.
$$
\end{proof}

Along the way we have discovered a new presentation of the tRS Lax matrix in terms of products of Vandermonde-type matrices \eqref{eq:tRSLaxMatrix}.

\vskip.1in
It remains to prove that momenta $p_i$ which appear as roots of first degree polynomials $s_i$ are given by formula \eqref{eq:piQformula} which provides geometric meaning of the tRS momenta.

\begin{lemma}
Given $q_i(z)= z - p_i,\, i = 1,\dots, n$ in matrix $M_{1,\dots,n}$ from \eqref{eq:MM0Ind} the following formula 
\begin{equation}\label{eq:piQformula}
p_i= - \frac{Q_i(0) }{Q_{i-1}(0) } = -\frac{\sigma_{i,1}\cdots\sigma_{i,i}}{\sigma_{i-1,1}\cdots\sigma_{i-1,i-1}}\,.
\end{equation}
\end{lemma}

\begin{proof}
Let us evaluate matrix $M_{1,\dots, k}(z)$ at $z=0$ for $k=1,\dots,n$. The following immediately follows
\begin{equation}
M_{1,\dots, k}(0)=-\text{diag}(p_{1},\dots,p_{k})\cdot  V_{1,\dots, k}\,,
\end{equation}
since $\left(M_{1,\dots, k}(0)\right)_{i,j}=-\zeta_i^{j-1} p_{i}$ and $\left(V_{1,\dots, k}\right)_{i,j}=\zeta_i^{j-1}$. Therefore, according to Proposition \ref{Th:QQVander} 
\begin{equation}\label{eq:detMV}
Q_k(0)=\text{det}\left[M_{1,\dots, k}(0)\cdot V_{1,\dots, k}^{-1}\right]=(-1)^k p_{1}\cdots p_{k}\,.
\end{equation}
which proves formula \eqref{eq:piQformula}.

\end{proof}

Matrix $L$ is known as Lax matrix for the trigonometric Ruijsenaars-Schneider model\footnote{In the literature slightly different normalizations are sometimes used.}. The theorem shows that its characteristic polynomial is equal to Baxter polynomial $P(u)$ whose roots are equivariant parameters $a_1,\dots, a_n$. By expanding both sides of \eqref{eq:tRSW} in $u$ we find explicitly the tRS Hamiltonians $H_1,\dots, H_n$
\begin{equation}
 \text{det}\left(u\cdot 1 -  L(\zeta_i, p_i,\hbar) \right) = \sum_{r=0}^n (-1)^rH_r(\zeta_i, p_i,\hbar) u^{n-r}\,,
\label{eq:tRSLaxDecomp}
\end{equation}
are equal to the corresponding elementary symmetric functions of the equivariant parameters
\begin{equation}
\label{eq:tRSEnergyEqns}
H_r(\zeta_i, p_i,\hbar)= e_r(\mathsf{a}_1,\dots, \mathsf{a}_n)\,,
\end{equation}
where
\begin{equation}
e_r(\mathsf{a}_1,\dots, \mathsf{a}_n)= \sum_{\substack{\mathcal{I}\subset\{1,\dots, n\} \\ |\mathcal{I}|=r}}\prod_{k\in\mathcal{I}}\mathsf{a}_k\,.
\label{eq:tRSEigenvalues}
\end{equation}

The phase space of the tRS model is described as follows.
Parameters $\zeta_1,\dots,\zeta_n$ and their conjugate momenta $p_1,\dots, p_n$ serve as canonical coordinates on the cotangent bundle to $\left(\Complex^\times\right)^n$. The symplectic form reads
\begin{equation}
\Omega = \sum_{i=1}^n \frac{dp_i}{p_i}\wedge \frac{d\zeta_i}{\zeta_i}\,.
\end{equation}

\noindent \textbf{Remark.}
It was shown in \cite{Bullimore:2015fr} classical momenta $p_i$ can be determined from the (exponentials of) derivatives of the so-called Yang-Yang function\footnote{Bethe equations arise as derivatives of the Yang-Yang function with respect to all Bethe roots $\sigma_{i,k}$.} for Bethe equations \eqref{eq:BetheEqDetailed}. These defining relations describe a complex Lagrangian submanifold
$\mathcal{L}\subset T^*\left(\Complex^\times\right)^n$, such that the generating function for this submanifold ($\Omega$ is identically zero on $\mathcal{L}$) is given by the Yang-Yang function. It is important to mention that relation of the spectrum of XXZ spin chains to Yang-Yang function was previously 
noted in the study of quantum Knizhnik-Zamolodchikov equations \cite{Tarasov:}.

\begin{proposition}
The Hamiltonians of the $n$-body tRS model are given by
\begin{equation}\label{eq:tRSRelationsEl}
H_r=\sum_{\substack{\mathcal{I}\subset\{1,\dots,n\} \\ |\mathcal{I}|=r}}\prod_{\substack{i\in\mathcal{I} \\ j\notin\mathcal{I}}}\frac{\zeta_i\, \hbar^{-^1/_2}-\zeta_j \, \hbar^{^1/_2}}{\zeta_i-\zeta_j}\prod\limits_{k\in\mathcal{I}}p_k \,,
\end{equation}
where $r=0,1,\ldots,n$.
In particular,
\begin{equation}
H_1=\text{Tr}\,L=\sum_{i=1}^n\prod_{j\neq i}^n\frac{\zeta_i\, \hbar^{-^1/_2}-\zeta_j \, \hbar^{^1/_2}}{\zeta_i-\zeta_j} p_i \,,\qquad H_n =\text{det}\, L= \prod\limits_{k=1}^n p_k \,.
\label{eq:tRSHam1}
\end{equation}
\end{proposition}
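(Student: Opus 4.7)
The plan is to identify $H_r$ with a sum of principal minors of $L$ and then evaluate each minor in closed form. The starting observation is the standard characteristic polynomial expansion
\[
\det(u\cdot 1-L)=\sum_{r=0}^n (-1)^r u^{n-r}\sum_{\substack{\mathcal{I}\subset\{1,\ldots,n\}\\ |\mathcal{I}|=r}}\det L_{\mathcal{I}},
\]
where $L_\mathcal{I}$ denotes the principal submatrix indexed by $\mathcal{I}$. Comparing with \eqref{eq:tRSLaxDecomp} gives $H_r=\sum_{|\mathcal{I}|=r}\det L_\mathcal{I}$, so the task reduces to evaluating $\det L_\mathcal{I}$ for a fixed $r$-subset $\mathcal{I}\subset\{1,\ldots,n\}$.

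First I would factor $L_\mathcal{I}$: pulling $p_j$ out of column $j$ gives the overall factor $\prod_{j\in\mathcal{I}}p_j$, and splitting both the numerator and denominator of \eqref{eq:tRSLAX} according to whether the product index $c$ lies in $\mathcal{I}$ or not isolates a row scalar coming from the factors indexed by $c\notin\mathcal{I}$ (which depend only on the row index $i$). This row scalar reproduces exactly the factor $\prod_{i\in\mathcal{I},\,j\notin\mathcal{I}}\frac{\zeta_i\hbar^{-1/2}-\zeta_j\hbar^{1/2}}{\zeta_i-\zeta_j}$ appearing in \eqref{eq:tRSRelationsEl}. What is left is an $r\times r$ matrix $\widetilde L_\mathcal{I}$ whose entries have the same shape as the Lax matrix but with all products restricted to $\mathcal{I}$ and all momenta set to $1$. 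Thus the proposition reduces to the identity $\det\widetilde L_\mathcal{I}=1$.

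The identity $\det\widetilde L_\mathcal{I}=1$ is the main computational step, and I would prove it by a Vandermonde argument. After pulling the row denominator $\prod_{c\in\mathcal{I},\,c\neq a}(\zeta_a-\zeta_c)$ out of row $a$, the claim becomes
\[
\det\bigl(g_b(\hbar^{-1/2}\zeta_a)\bigr)_{a,b\in\mathcal{I}}=\prod_{a\neq c}(\zeta_a-\zeta_c),\qquad g_b(u):=\prod_{c\in\mathcal{I},\,c\neq b}(u-\hbar^{1/2}\zeta_c).
\]
Writing $g_b(u)=\sum_k G_{kb}u^k$, the evaluation matrix factors as a Vandermonde matrix in the points $\hbar^{-1/2}\zeta_a$ times the coefficient matrix $G$. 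Evaluating the same $g_b$ at $u=\hbar^{1/2}\zeta_a$ instead makes the evaluation matrix diagonal, because the factor $c=a$ then appears in the defining product whenever $a\neq b$, forcing the off-diagonal entries to vanish; its determinant can be read off directly. The ratio of the two evaluations is exactly the ratio of the corresponding Vandermondes, a simple power of $\hbar$ that cancels the explicit $\hbar$-dependence coming from the diagonal evaluation and leaves precisely $\prod_{a\neq c}(\zeta_a-\zeta_c)$, as required.

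The special cases \eqref{eq:tRSHam1} are then immediate. For $r=1$, each singleton $\mathcal{I}=\{i\}$ contributes the diagonal entry $L_{ii}$, so $H_1=\text{Tr}\,L$. For $r=n$, the unique subset is $\{1,\ldots,n\}$, whose complement is empty; the external product is then trivial and $H_n=\det L=\prod_k p_k$. The main obstacle is the key identity $\det\widetilde L_\mathcal{I}=1$, which is a classical tRS statement dressed up in geometric notation; once it is in hand, the rest of the argument is bookkeeping of how the factors in the principal minors of $L$ are organized.
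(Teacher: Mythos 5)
Your proposal is correct, and while it starts from the same reduction as the paper — expand $\det(u\cdot 1 - L)$ into a sum over principal minors, so that $H_r = \sum_{|\mathcal{I}|=r}\det L_\mathcal{I}$ — the evaluation of $\det L_\mathcal{I}$ is done by a genuinely different route. The paper first treats the $2\times 2$ case explicitly, then for general $\mathcal{I}$ argues by expanding each minor along rows until a $2\times 2$ block is reached, showing that the double poles in $(\zeta_i-\zeta_j)$ with $i,j\in\mathcal{I}$ cancel as in the $2\times 2$ case; it then invokes a degree count in $\zeta_i$ to conclude the remaining factor is a constant, and pins the constant to $1$ by inspecting the leading ``non-difference'' terms. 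This is short but relies on several assertions (``it is easy to see'', ``clear from normalization'') that are not spelled out. Your argument instead factors $L_\mathcal{I}$ cleanly: pull $p_b$ out of columns and the $c\notin\mathcal{I}$ portion of the row factors out as a scalar, giving precisely the external product in \eqref{eq:tRSRelationsEl}, and reduces everything to the identity $\det\widetilde L_\mathcal{I} = 1$. You then prove this by the Vandermonde factorization $\bigl(g_b(\hbar^{-1/2}\zeta_a)\bigr) = V\cdot G$, comparing with the diagonal evaluation at $u=\hbar^{1/2}\zeta_a$ to read off $\det G$; the $\hbar$-powers from the two Vandermondes cancel exactly against the $\hbar^{r(r-1)/2}$ from the diagonal entries, leaving $\prod_{a\neq c}(\zeta_a-\zeta_c)$ as required. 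This buys a fully explicit algebraic identity in place of the paper's pole-cancellation-plus-degree-counting heuristic, at the cost of a slightly longer computation; it is arguably the more self-contained proof of the two.
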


Note that $H_1,\dots, H_r$ coincide with the classical version of the Macdonald difference operators.

\begin{proof}
Let us first see how the proposition works in the case of $2\times 2$ matrix, i.e. $n=2$. In this case the $L$-matrix looks like this:
\[ \left( \begin{array}{cc}
\frac{\hbar^{-^1/_2}\zeta_1-\hbar^{^1/_2}\zeta_2}{\zeta_1-\zeta_2}p_1 & \frac{\hbar^{-^1/_2}\zeta_1-\hbar^{^1/_2}\zeta_1}{\zeta_1-\zeta_2}p_2 \\
&\\
 \frac{\hbar^{-^1/_2}\zeta_2-\hbar^{^1/_2}\zeta_2}{\zeta_1-\zeta_2}p_1&
  \frac{\hbar^{-^1/_2}\zeta_2-\hbar^{^1/_2}\zeta_1}{\zeta_2-\zeta_1}p_2
\end{array} \right)\]
An elementary calculation shows that the statement is true and in particular, the determinant of this matrix is equal to $p_1p_2$ due to the fact that the second order pole in $(\zeta_1-\zeta_2)$ disappear. This will be relevant in the case of higher $n$.

To prove the statement in the case of general $n$ we use the Fredholm decomposition:
\begin{eqnarray}
\text{det}\left(u\cdot 1- L \right)=\sum^n_{r=0}u^{n -r}(-1)^r \text{Tr}\,\Lambda^r(L)\,,
\end{eqnarray}
where $\Lambda^r$ denotes the exterior power. Clearly, $\text{Tr}\,\Lambda^r(L)$ is just the sum over all minors of rank
r. Let us look at the terms representing each minor in detail.
The explicit expression for each of them is given by the sum over the products of its matrix elements accompanied by a sign. It is easy to see that the common divisor for such products is exactly
\begin{eqnarray}\label{comdiv}
\prod_{\substack{i\in\mathcal{I} \\ j\notin\mathcal{I}}}\frac{\zeta_i\, \hbar^{-^1/_2}-\zeta_j \, \hbar^{^1/_2}}{\zeta_i-\zeta_j}\prod\limits_{k\in\mathcal{I}}p_k \,,
\end{eqnarray}
where $\mathcal{I}$ is the number of indices representing the minor. Other terms involve products with poles $(\zeta_i-\zeta_j)$ where both $i, j$ belong to $\mathcal{I}$. Let us show that all of these poles disappear as in the $2\times 2$ case.
Note, that such pole $(\zeta_i-\zeta_j)$ appears twice in each product. Let us show that there is no such pole in the final expression. To do that let us expand each minor using the row decomposition till we reach the $2\times 2$ minor $L_{\{i,j\}}$. Clearly, this is the only term in this expansion containing such a pole, and by the same calculation as in $2\times 2$  case as above, it cancels out.
Therefore, the coefficient of (\ref{comdiv}) in the expansion does not depend on $\zeta_i$ as one can deduce from counting the powers of
$\zeta_i$ in the numerator and the denominator.
To finish the proof one needs to show that the resulting constant is equal to 1 for any $\mathcal{I}$. That is clear from the normalization of ``non-difference terms'', in numerator, which are responsible for pole cancellation, namely  $\zeta_i(\hbar^{-^1/_2}-\hbar^{^1/_2})$.
\end{proof}

We are now ready to formulate the main theorem of this section. In \eqref{eq:zvdzeta} we can put $\zeta_n=1$ and express the variables $\zeta_j$ via $z_i$ as
\begin{equation}
\label{eq:KahlerParameterszeta}
\zeta_j = \prod_{l\geq j} z_l\,,\qquad \frac{\zeta_i}{\zeta_j}=z_{i+1}\cdots z_{j-1}\,,\qquad j>i\,.
\end{equation}
One can notice that the Hamiltonians $\{H_r\}$ depend only on the momenta $\{p_i\}$ and the ratios of the  coordinates $\{\zeta_i\}$ and thus are the functions of the variables $\{p_i\}, \{z_i\}$. 
\begin{theorem}\label{Th:KTmain}
The quantum equivariant K-theory of the cotangent bundle to complete n-flag is given by
\begin{equation}
QK_T(T^*\mathbb{F}l_n)=\frac{\Complex[\mathsf{a}^{\pm 1}_1,\dots,\mathsf{a}^{\pm 1}_n,\hbar^{\pm 1/2},\, p_1^{\pm 1},\dots,p_n^{\pm 1}][[z_1,\dots,z_{n-1}]]}{(R_1,\dots,R_{n})}\,,
\end{equation}
where $R_1,\dots,R_{n}$ are the coefficients of the following polynomial:\footnote{Notice that relations $R_i=0$ are nothing but tRS energy level equations $H_i=e_i$ (see \eqref{eq:tRSEnergyEqns}) multiplied by common denominator for each $i=1,\dots, r$.}
\begin{equation}
\text{det}(M_{1,\dots, n})(u)-P(u)\cdot \text{det}(V_{1,\dots, n}) = \sum_{k=0}^n (-1)^k u^{n-k} R_k\,.
\end{equation}
\end{theorem}

\begin{proof}
The statement directly follows from Proposition \ref{generators}, the fact that coefficients of ${\bf Q}_i$-operators are generators of all tautological bundles, and Theorem \ref{Th:XXZ/tRS}. 
\end{proof}

\noindent {\bf Remark.} We mention that in \cite{Rimanyi:2014ef} (section 13) the authors conjectured the generators and 
relations for quantum K-theory of cotangent bundles of flag varieties. In the case at hand (full flags) we indicate that our formulas do coincide, thus proving the conjecture of \cite{Rimanyi:2014ef}.

We also indicate that the relations between various limits of spin chain models and many body systems were studied extensively in recent years within integrable systems community, see e.g. \cite{Mukhin:}, \cite{Zabrodin:}.

\subsection{Dual tRS Model from XXZ Chain}
In \eqref{eq:tRSRelationsEl} tRS Hamiltonians are functions of quantum parameters $\zeta_1,\dots, \zeta_n$ and the eigenvalues \eqref{eq:tRSEigenvalues} are given by symmetric polynomials of equivariant parameters. It turns out that there is a dual formulation of the integrable model such that these parameters switch roles and is know as \textit{bispectral duality}. We can show that from starting from Bethe equations \eqref{eq:XXZGen} we can derive the dual set of tRS Hamiltonians.

\begin{theorem}\label{Th:XXZ/tRSdual}
Let $L^{!}$ be the following matrix
\begin{equation}
L^{!}_{ij} = \frac{\prod\limits_{k \neq j}^n \left(\hbar^{^1/_2} \mathsf{a}_i \,  - \hbar^{-^1/_2} \mathsf{a}_k \,  \right) }{\prod\limits^n_{k \neq i} \left( \mathsf{a}_i-\mathsf{a}_k\right) }   p^{!}_j\,,
\label{eq:tRSLAXdual}
\end{equation}
where
\begin{equation}
p^{!}_j = \hbar^{n-1}\zeta_n \frac{Q^{(1)}_{n-1}(\mathsf{a}_j) }{Q^{(-1)}_{n-1}(\mathsf{a}_j) }\,,\quad j=1,\dots,n\,.
\end{equation}
Then Bethe equations \eqref{eq:XXZGen} are equivalent to
\begin{equation}
H_r^!:=\text{Tr}\,\Lambda^r(L^!) = e_r(\zeta_1,\dots,\zeta_n)\,.
\label{eq:tRSWdual}
\end{equation}
\end{theorem}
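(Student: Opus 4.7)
The plan is to parallel the proof of Theorem~\ref{Th:XXZ/tRS} by exploiting the bispectral symmetry of the $Q\tilde{Q}$-system \eqref{eq:QQrelationsComplete}. In that theorem the characteristic polynomial of $L$ was extracted from the top polynomial $P(u)=Q_n(u)$; here I would instead evaluate the system \emph{at the roots} $u=\alpha_j$ of $P(u)$ and recognize the trace relation of a dual Lax matrix built from the equivariant parameters.

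First I would introduce the rational function
\[
\Phi(u) \;=\; \hbar^{n-1}\zeta_n \cdot \frac{\hbar^{n/2}\,\prod_{k=1}^{n}(u-\hbar^{-1}\alpha_k)}{u\,(\hbar^{1/2}-\hbar^{-1/2})\,P(u)} \cdot \frac{Q_{n-1}^{(1)}(u)}{Q_{n-1}^{(-1)}(u)}\,,
\]
which by a short direct residue calculation satisfies $\mathrm{Res}_{u=\alpha_j}\Phi(u)=L^{!}_{jj}$, so that $\mathrm{Tr}\,L^{!}$ coincides with the sum of residues of $\Phi$ at the points $\alpha_1,\dots,\alpha_n$. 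Beyond these, $\Phi$ has simple poles only at $u=0$, at $u=\infty$, and at the shifted Bethe roots $u_\alpha=\hbar^{-1/2}\sigma_{n-1,\alpha}$ (the zeros of $Q_{n-1}^{(-1)}$). The residue theorem on $\mathbb{CP}^1$ then yields
\[
\mathrm{Tr}\,L^{!} \;=\; -\mathrm{Res}_{u=0}\Phi \;-\; \mathrm{Res}_{u=\infty}\Phi \;-\; \sum_{\alpha=1}^{n-1}\mathrm{Res}_{u=u_\alpha}\Phi\,.
\]

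Next I would compute the individual residues. Those at $u=0$ and $u=\infty$ are elementary and combine into a term proportional to $\zeta_n$. At each Bethe-root pole $u_\alpha$ I would invoke the Bethe equation at level $i=n-1$ in \eqref{eq:XXZGen} (equivalently the T-Q relation \eqref{eq:TQBaxter}): evaluated at $\sigma_{n-1,\alpha}$, it trades the factor $\zeta_n/\zeta_{n-1}$ appearing inside $\Phi$ against a product over the next-level Bethe roots $\sigma_{n-2,\beta}$. Iterating this descent through the Bethe equations at levels $i=n-2,\dots,1$ produces in succession contributions equal to $\zeta_{n-1},\zeta_{n-2},\dots,\zeta_1$, which, together with the $\zeta_n$-piece above, assemble into the desired sum. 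The cleanest way to organize the telescoping is by induction on $n$: the case $n=1$ gives $L^{!}=\zeta_1$ trivially, and the inductive step uses the top $QQ$-relation to reduce the residue calculation at level $n-1$ to the analogous trace statement for the $(n-1)$-flag.

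The main obstacle will be the combinatorial bookkeeping of these telescoping cancellations, in particular tracking signs, $\hbar$-powers and symmetric-group products at every level. An alternative and more conceptual route that sidesteps the recursion is to apply Proposition~\ref{Th:QQVander} directly: express $Q_{n-1}^{(\pm 1)}(\alpha_j)$ as ratios of minors of a suitable shifted version of the matrix $M_{1,\dots,n}$ from \eqref{eq:MM0Ind} and invoke Desnanot--Jacobi in its \emph{dual} form, where the $\alpha_j$'s now play the role that the $\zeta_i$'s played in Theorem~\ref{Th:XXZ/tRS}. This would upgrade the trace statement to the full characteristic polynomial identity $\det(u\cdot 1-L^{!})=\prod_{i=1}^{n}(u-\zeta_i)$, simultaneously producing all higher dual tRS Hamiltonian relations. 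The delicate point in this second approach is verifying that the bispectral swap $\alpha\leftrightarrow\zeta$ correctly exchanges rows and columns of the relevant Vandermonde and $M$-matrices, and that the auxiliary polynomials $\widetilde{Q}_j$ appearing in the $Q\tilde{Q}$-system admit the analogous determinantal representation under this swap.
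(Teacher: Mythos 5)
The paper offers no proof of this theorem — the entire ``proof'' is the line ``See Appendix A2 of \cite{Gaiotto:2013bwa}'' — so there is no internal argument to compare against. Your residue-theoretic sketch is therefore supplying content the paper omits, and it follows the standard contour/residue technique used in the physics literature for this kind of quantum/classical duality; it is very plausibly the same argument carried out in the cited reference.

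Your rational function $\Phi(u)$ is correctly engineered: one checks directly that
\[
\hbar^{n/2}\prod_{k=1}^{n}(\alpha_j-\hbar^{-1}\alpha_k)
\;=\;
\alpha_j\,(\hbar^{1/2}-\hbar^{-1/2})\,\prod_{k\neq j}(\hbar^{1/2}\alpha_j-\hbar^{-1/2}\alpha_k),
\]
which gives $\mathrm{Res}_{u=\alpha_j}\Phi = L^{!}_{jj}$. The remaining poles (at $u=0$, $u=\infty$, and at $u=\hbar^{-1/2}\sigma_{n-1,\alpha}$) and the simple-pole behavior at infinity are also as you state, so the residue theorem does convert $\mathrm{Tr}\,L^{!}$ into the sum of the other residues. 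That part is solid. What is missing, and you acknowledge it, is the actual execution of the telescoping through the Bethe equations level by level; this is where essentially all the work in such proofs lives, and an outline is not a proof.

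Before carrying that out, be aware of a concrete numerical check that does not come out clean with the theorem as written. For $n=2$, your $\Phi$ gives $-\mathrm{Res}_{0}\Phi-\mathrm{Res}_{\infty}\Phi=\hbar^{1/2}\zeta_2$ and $-\mathrm{Res}_{u_1}\Phi=\hbar^{1/2}\zeta_1$, hence $\mathrm{Tr}\,L^{!}=\hbar^{1/2}(\zeta_1+\zeta_2)$; a direct evaluation of $L^{!}_{11}+L^{!}_{22}$ at a Bethe solution gives the same extra factor $\hbar^{1/2}$. In general $-\mathrm{Res}_0\Phi-\mathrm{Res}_\infty\Phi=\hbar^{(n-1)/2}\zeta_n$. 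This strongly suggests the exponent $\hbar^{n-1}$ in $p^{!}_j$ should read $\hbar^{(n-1)/2}$ (so that the whole expression is homogeneous with the primal $L$); as written, no amount of telescoping will land on $\zeta_1+\cdots+\zeta_n$, and your proof will stall at the last step unless you first resolve this $\hbar$-power.

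On the alternative Desnanot--Jacobi route: the idea of upgrading the trace identity to the full characteristic polynomial $\det(u-L^{!})=\prod_i(u-\zeta_i)$ is natural, but ``dualizing'' Proposition~\ref{Th:QQVander} is not automatic. In the matrices $M_{i_1,\dots,i_j}$ of \eqref{eq:MM0Ind} the $\zeta_i$'s index rows while the $\hbar$-shifted $q_i$'s occupy columns, so the parameters $\alpha$ and $\zeta$ are not on symmetric footing. Establishing that the swap $\alpha\leftrightarrow\zeta$, $\hbar\leftrightarrow\hbar^{-1}$ intertwines these matrices with a Vandermonde in $\alpha$ in the right way is itself a claim that would need proof, not something to invoke by analogy with Theorem~\ref{Th:XXZ/tRS}.
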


In other words, diagonalization of the Lax matrix $L^{!}$ of the dual $n$-body tRS model is equivalent to solving the same Bethe equations \eqref{eq:XXZGen}.
We can see that bispectral duality works in the following way on the level of Lax matrices
\begin{equation}
L^{!}_{ij} (\mathsf{a}_1,\dots,\mathsf{a}_n;\zeta_1,\dots,\zeta_n,\hbar; p^{!}_1,\dots,p^{!}_n)=L_{ij} (\zeta_1,\dots,\zeta_n; \mathsf{a}_1,\dots,\mathsf{a}_n,\hbar^{-1}; p_1,\dots,p_n)\,.
\end{equation}
Note that in order to prove this theorem it is sufficient to study only one of the Hamiltonians, say $H_1^!$. Eigenvalues of the other Hamiltonians follow from integrability of the model.

\begin{proof}
See Appendix A2 of \cite{Gaiotto:2013bwa}.
\end{proof}

\noindent \textbf{Remark.}
Theorem \ref{Th:KTmain} shows that defining relations for quantum K-theory of $T^*\mathbb{F}l_n$ are equivalent to integrals of motion of the $n$-body tRS model. One can therefore ask what which integrable system describes K-theory of cotangent bundles to partial flags $T^*G/P$. A procedure which leads to the answer was outlined in \cite{Gaiotto:2013bwa,Bullimore:2015fr} and states the following. One can define a restricted tRS model such that positions of some of its particles are fixed relative to each other. This restriction defines parabolic subgroup $P\subset G$. Thus the unrestricted tRS phase space corresponds arises when parabolic subgroup $P$ is replaced with a Borel subgroup $B$.
Using a chain of specifications of equivariant parameters or quantum parameters of $QK_T(T^*\mathbb{F}l_n)$ we can arrive to quantum K-ring of the desired Nakajima variety $T^*G/P$. Depending on which set of parameters is chosen, the original tRS Hamiltonians/Macdonald operators \eqref{eq:tRSRelationsEl}, or their dual counterparts \eqref{eq:tRSWdual}, will be used to define the corresponding K-rings. In recent mathematical literature some progress in this direction was made in \cite{Rimanyi:2014ef}. We plan to return to these questions in the near future.\\

\noindent \textbf{Remark.}
The XXZ/tRS duality, which we have developed in this section, is a top element in the hierarchy of \textit{spin chain/many-body system} dualities, which were outlined in \cite{Gaiotto:2013bwa}. A XXZ spin chain can be reduced to either the XXX spin chain or to the Gaudin model. On the other side, a trigonometric Ruijsenaars-Schneider model can be reduced to the rational Ruijsenaars-Schneider model or to the trigonometric Calogero-Moser model. The dualities on this level of hierarchy will provide a description for quantum equivariant cohomology of $T^*G/B$ in terms of integrals of motion of rational RS or trigonometric Calogero-Moser model, analogously to our quantum K-theory statement (Theorem \ref{Th:XXZ/tRS}).

\section{Compact Limit of XXZ Bethe Ansatz and of tRS Model}\label{Sec:tRSToda}

In this final section we shall compare our results, 
with the work of Givental and Lee \cite{2001math8105G} where they give a description of the equivariant quantum K-theory ring of complete flag varieties. 
In contrast with the approach of the present paper,
the ring constructed in \cite{2001math8105G} utilises the moduli spaces of stable maps. We denote the Givental-Lee quantum K-theory ring by $QK^{GL}_{T'}(\mathbb{F}l_n)$ (here $T'$ is the maximal torus of $U(n)$). We show that in the limit $\hbar \to \infty$ the quantum K-theory ring  $QK_T(T^{*}\mathbb{F}l_n)$  which we study in this paper degenerates to a ring isomorphic to $QK^{GL}_{T'}(\mathbb{F}l_n)$. 

In order to understand quantum multiplication in $QK_{T'}(T^{*}\mathbb{F}l_n)$ at $\hbar\to \infty$ we must compute the corresponding limit of Bethe equations \eqref{eq:XXZBetheGeom} which is given in \eqref{eq:XXZCompact}. Then we will need to  follow the steps of \secref{Sec:XXZ/tRS} and present the resulting Bethe equations as conditions for roots of a characteristic polynomial of a certain matrix, which will appear to be the Lax matrix of difference Toda model \cite{Etingof:ac}. In this way we arrive at relations in $QK^{GL}_{T'}(\mathbb{F}l_n)$ from \cite{2001math8105G}.

\subsection{Five-Vertex Model and Quantum Toda Chain}
Using Baxter Q-polynomials we can present Bethe equations \eqref{eq:XXZCompact} in a more concise form.
\begin{lemma}
Let
\begin{equation}
Q_i(u)=\prod_{j=1}^i(u-s_{i,j})\,, \qquad M(u):=Q_n(u)=\prod_{i=1}^n(u-a_i)\,.
\end{equation}
Then we can rewrite \eqref{eq:XXZCompact} as
\begin{equation}\label{eq:XXZCompactBaxter}
\frac{Q_{i+1}(s_{i,k})}{Q_{i-1}(s_{i-1,k})}\cdot\frac{\prod\limits_{j=1}^{\textbf{v}_i} s_{i,j}}{\prod\limits_{j=1}^{\textbf{v}_{i+1}} s_{i+1,j}}=z^{\#}_i (-1)^{\delta_i}(s_{i,k})^{\textbf{v}_i-\textbf{v}_{i-1}-1}\,,
\end{equation}
\end{lemma}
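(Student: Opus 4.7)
The plan is to verify the identity factor by factor by directly substituting the product formulas for the Baxter polynomials
\[
Q_i(u)=\prod_{j=1}^{\mathbf{v}_i}(u-s_{i,j})
\]
into the three forms of the Bethe equations \eqref{eq:XXZCompact} (the boundary cases $i=1$ and $i=n-1$, and the bulk case $2\le i\le n-2$), and then to collect the resulting expressions. Since $Q_0(u):=1$ and $Q_n(u)=M(u)$, it suffices to carry out the computation for a generic $i$; the two boundary cases are then recovered by the same manipulation using the conventions $\mathbf{v}_0=0$ and $\mathbf{v}_n=\mathbf{w}_{n-1}$, together with the observation that the $a_j$-dependent factor in the $i=n-1$ equation of \eqref{eq:XXZCompact} is precisely $M(s_{n-1,k})/\prod_j a_j=Q_n(s_{n-1,k})/\prod_j s_{n,j}$.

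Concretely, first I would rewrite the three ``difference'' products appearing on the left-hand side of the bulk equation of \eqref{eq:XXZCompact} as
\[
\prod_{j=1}^{\mathbf{v}_{i+1}}\bigl(s_{i,k}-s_{i+1,j}\bigr)=Q_{i+1}(s_{i,k}),\qquad
\prod_{j=1}^{\mathbf{v}_{i-1}}\bigl(s_{i-1,j}-s_{i,k}\bigr)=(-1)^{\mathbf{v}_{i-1}}Q_{i-1}(s_{i,k}),
\]
while the remaining purely monomial products are left intact. Simultaneously, on the right-hand side I would collect
\[
\prod_{j\neq k}^{\mathbf{v}_i}\frac{-s_{i,k}}{s_{i,j}}
=(-1)^{\mathbf{v}_i-1}\,\frac{s_{i,k}^{\mathbf{v}_i-1}}{\prod_{j\neq k}s_{i,j}}
=(-1)^{\mathbf{v}_i-1}\,\frac{s_{i,k}^{\mathbf{v}_i}}{\prod_{j=1}^{\mathbf{v}_i}s_{i,j}},
\]
multiplying numerator and denominator by $s_{i,k}$ so that the symmetric product $\prod_{j=1}^{\mathbf{v}_i}s_{i,j}$ emerges.

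Having done these two substitutions, I would transpose the ``monomial'' factors $\prod s_{i+1,j}$ and $\prod s_{i,j}$ across the equality so that all $Q$-polynomials and all symmetric products of Bethe roots appear on the left, while all factors of $z^\sharp_i$, $s_{i,k}$, and signs appear on the right. After this rearrangement the left-hand side becomes
\[
\frac{Q_{i+1}(s_{i,k})}{Q_{i-1}(s_{i,k})}\cdot\frac{\prod_{j=1}^{\mathbf{v}_i}s_{i,j}}{\prod_{j=1}^{\mathbf{v}_{i+1}}s_{i+1,j}},
\]
which matches \eqref{eq:XXZCompactBaxter} (modulo what I expect is a misprint of the subscript $i-1$ in the argument of $Q_{i-1}$). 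Collecting signs from $(-1)^{\mathbf{v}_{i-1}}$ and $(-1)^{\mathbf{v}_i-1}$ and combining them with the sign in \eqref{eq:XXZGen}, one reads off the exponent $\delta_i=\mathbf{v}_{i-1}+\mathbf{v}_i+\mathbf{v}_{i+1}-1$.

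The only delicate point—and the main obstacle—is the bookkeeping of signs and of powers of $s_{i,k}$. In particular, the $s_{i,k}^{\mathbf{v}_i-\mathbf{v}_{i-1}-1}$ on the right of \eqref{eq:XXZCompactBaxter} comes from combining the $s_{i,k}^{\mathbf{v}_{i-1}}$ factor produced by the middle product on the left-hand side of \eqref{eq:XXZCompact} with the $s_{i,k}^{\mathbf{v}_i-1}$ factor produced on the right-hand side; similarly the sign $(-1)^{\delta_i}$ arises by accumulating the parities of $\mathbf{v}_{i-1}$, $\mathbf{v}_i$, and (for the boundary case $i=n-1$) $\mathbf{v}_{n}=\mathbf{w}_{n-1}$. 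For the two boundary equations the same computation goes through word-for-word once one adopts the conventions $Q_0\equiv 1$, $Q_n\equiv M$, $\mathbf{v}_0=0$ and $\mathbf{v}_n=\mathbf{w}_{n-1}$, which establishes the lemma uniformly for all $i=1,\dots,n-1$.
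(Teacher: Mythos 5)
Your approach — substituting the product formulas for $Q_i$ into \eqref{eq:XXZCompact}, rewriting the difference products as evaluated $Q$-polynomials, and shuffling the monomial factors across the equality — is the right and in fact the only natural method; the paper omits the proof of this lemma, so this is indeed the intended argument. You also correctly flagged the misprint in the argument of $Q_{i-1}$, which should read $Q_{i-1}(s_{i,k})$ rather than $Q_{i-1}(s_{i-1,k})$.

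However, your write-up contains a small internal inconsistency in the bookkeeping of the powers of $s_{i,k}$, and it is worth being explicit about it because it traces back to an off-by-one slip in the paper's stated formula. In your displayed manipulation you multiply numerator and denominator by $s_{i,k}$ to produce the full product $\prod_{j=1}^{\mathbf{v}_i}s_{i,j}$ together with $s_{i,k}^{\mathbf{v}_i}$; but in the final accounting paragraph you instead use $s_{i,k}^{\mathbf{v}_i-1}$, which is what one gets if one does \emph{not} multiply and keeps $\prod_{j\neq k}s_{i,j}$. A consistent computation gives either
\begin{equation*}
\frac{Q_{i+1}(s_{i,k})}{Q_{i-1}(s_{i,k})}\cdot\frac{\prod_{j\neq k}s_{i,j}}{\prod_{j=1}^{\mathbf{v}_{i+1}}s_{i+1,j}}
= z^{\#}_i\,(-1)^{\mathbf{v}_{i-1}+\mathbf{v}_i-1}\,s_{i,k}^{\,\mathbf{v}_i-\mathbf{v}_{i-1}-1}
\end{equation*}
or, equivalently, the full product $\prod_{j=1}^{\mathbf{v}_i}s_{i,j}$ on the left paired with the exponent $\mathbf{v}_i-\mathbf{v}_{i-1}$ on the right. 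The paper's \eqref{eq:XXZCompactBaxter} pairs the full product with the exponent $\mathbf{v}_i-\mathbf{v}_{i-1}-1$, which is internally inconsistent, and the sign one actually accumulates is $(-1)^{\mathbf{v}_{i-1}+\mathbf{v}_i-1}$, not $(-1)^{\delta_i}$ with $\delta_i=\mathbf{v}_{i-1}+\mathbf{v}_i+\mathbf{v}_{i+1}-1$ (they differ by $(-1)^{\mathbf{v}_{i+1}}$). None of this affects the complete-flag specialization that the paper goes on to use, since there $\mathbf{v}_i=i$ makes $\mathbf{v}_i-\mathbf{v}_{i-1}-1=0$, but a clean proof should state which of the two consistent forms it is establishing rather than reproduce the paper's mismatched pairing.
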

where $\delta_i$ are given after the formula \eqref{eq:BetheEqDetailed}. As in the previous section we shall focus on complete flag varieties for which $\textbf{v}_i=i$, thus the exponent of $s_{i,k}$ in the right hand side of the above expression vanishes.\\

\noindent \textbf{Remark.} Equations \eqref{eq:XXZCompact} generalize the result of \cite{Kim:1995us} and serve as Bethe ansatz equations for the five-vertex model. \\

\noindent Using auxiliary Baxter polynomials we can rewrite \eqref{eq:XXZCompactBaxter} in the $Q\widetilde Q$ form similarly to \eqref{eq:QQrelationsComplete}.
\begin{proposition}\label{eq:PropositionqToda}
The system of equations \eqref{eq:XXZCompactBaxter} for $\textbf{v}_i=i$ is equivalent to the following system
\begin{equation}\label{eq:QQtToda}
Q_{i+1}(u)-\frac{\mathfrak{z}_{i+1}}{\mathfrak{z}_i}Q_{i-1}(u)\cdot u \cdot \mathfrak{p}_{i+1} = Q_i(u)\widetilde{Q}_i(u)\,,\quad i=1,\dots,n
\end{equation}
where $z^{\#}_i=\frac{\mathfrak{z}_{i}}{\mathfrak{z}_{i+1}}$, $\widetilde{Q}_i(u),\, i=1,\dots, {n-1}$ are monic polynomials of degree one and
\begin{equation}
\mathfrak{p}_i = - \frac{Q_i(0) }{Q_{i-1}(0) }\,.
\label{eq:TodaLaxMomenta}
\end{equation}
\end{proposition}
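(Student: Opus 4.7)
The plan is to reduce the system \eqref{eq:XXZCompactBaxter} at $\mathbf{v}_i = i$ to the pointwise vanishing of an explicit polynomial on the roots of $Q_i$, and then promote this to the identity \eqref{eq:QQtToda} by a standard polynomial division argument.

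First, I would convert the products of Bethe roots appearing in \eqref{eq:XXZCompactBaxter} into evaluations of Baxter polynomials at $u=0$. Using
\[
Q_i(0) = (-1)^i \prod_{j=1}^{i} s_{i,j}, \qquad \mathfrak{p}_{i+1} = -\frac{Q_{i+1}(0)}{Q_i(0)},
\]
the ratio $\prod_j s_{i+1,j}/\prod_j s_{i,j}$ on the left of \eqref{eq:XXZCompactBaxter} becomes a simple expression in $\mathfrak{p}_{i+1}$. Combined with $Q_{i+1}(s_{i,k}) = \prod_j (s_{i,k}-s_{i+1,j})$ and $Q_{i-1}(s_{i,k}) = \prod_j(s_{i,k}-s_{i-1,j})$ (up to signs absorbed by the $(-1)^{\delta_i}$ on the right), the Bethe equation at an arbitrary root $s_{i,k}$ rearranges to exactly the vanishing
\[
Q_{i+1}(s_{i,k}) \;-\; \frac{\mathfrak{z}_{i+1}}{\mathfrak{z}_i}\,\mathfrak{p}_{i+1}\,s_{i,k}\,Q_{i-1}(s_{i,k}) \;=\; 0,
\]
i.e., the left-hand side of \eqref{eq:QQtToda} vanishes at every Bethe root of level $i$.

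Next, I would invoke a degree count. Set
\[
P_i(u) := Q_{i+1}(u) - \frac{\mathfrak{z}_{i+1}}{\mathfrak{z}_i}\,\mathfrak{p}_{i+1}\,u\,Q_{i-1}(u).
\]
With $\mathbf{v}_i = i$, the polynomial $Q_{i+1}(u)$ is monic of degree $i+1$ while $uQ_{i-1}(u)$ has degree $i$, so $P_i$ is monic of degree $i+1$. The previous step shows that $P_i$ vanishes at all $i$ roots of the monic degree-$i$ polynomial $Q_i$; hence $P_i$ is divisible by $Q_i$ in $\mathbb{C}[u]$, and the quotient
\[
\widetilde{Q}_i(u) := \frac{P_i(u)}{Q_i(u)}
\]
is forced to be a monic polynomial of degree $(i+1)-i=1$, as asserted. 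This gives the forward direction. The converse is immediate: if \eqref{eq:QQtToda} holds, evaluation at $u = s_{i,k}$ kills the right-hand side, and reversing the first step recovers \eqref{eq:XXZCompactBaxter}.

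The only delicate point is the bookkeeping in the first step: tracking the signs $(-1)^{\delta_i}$ and the power $s_{i,k}^{\mathbf{v}_i-\mathbf{v}_{i-1}-1}$ on the right-hand side of \eqref{eq:XXZCompactBaxter}. The specialization $\mathbf{v}_i = i$ makes that exponent vanish and forces $\mathbf{v}_i - \mathbf{v}_{i-1} = 1$, which is exactly what makes the degree drop in the division by $Q_i$ match the degree-one monic form of $\widetilde{Q}_i$. This argument is entirely parallel to the proof of Proposition \ref{eq:PropQQtilde}; for general $\mathbf{v}_i$ the same interpolation produces $\widetilde{Q}_i$ of degree $\mathbf{v}_{i-1}-\mathbf{v}_i+\mathbf{v}_{i+1}$, and it is only in the complete-flag case that this specializes to the clean Toda-type relation \eqref{eq:QQtToda}.
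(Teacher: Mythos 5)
Your argument is correct and follows the same basic idea as the paper: the paper's proof is a one-line reference to Proposition~\ref{eq:PropQQtilde}, whose method is precisely to evaluate the $Q\widetilde Q$-relation at the roots of $Q_i$ so that $\widetilde Q_i$ drops out and the Bethe equation is recovered. You run the same evaluation step, but you make the \emph{converse} direction explicit: having converted the Bethe equation \eqref{eq:XXZCompactBaxter} at $\mathbf{v}_i=i$ into the vanishing of $P_i(u)=Q_{i+1}(u)-\frac{\mathfrak{z}_{i+1}}{\mathfrak{z}_i}\mathfrak{p}_{i+1}\,u\,Q_{i-1}(u)$ at every root of $Q_i$, the monic degree-$(i+1)$ polynomial $P_i$ is divisible by the monic degree-$i$ polynomial $Q_i$, forcing $\widetilde Q_i=P_i/Q_i$ to be monic of degree one. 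This interpolation step is exactly what is left implicit behind the paper's word ``equivalent,'' so your write-up is if anything more complete than the reference to the $\hbar$-shifted argument of Proposition~\ref{eq:PropQQtilde} (which, in the compact limit, degenerates to plain evaluation since there are no shifts in \eqref{eq:QQtToda}). One small bookkeeping remark worth verifying against the statements as printed: pushing \eqref{eq:XXZCompact} through $Q_i(0)=(-1)^i\prod_j s_{i,j}$ and $\mathfrak{p}_{i+1}=-Q_{i+1}(0)/Q_i(0)$ gives the exponent of $s_{i,k}$ on the right of \eqref{eq:XXZCompactBaxter} as $\mathbf{v}_i-\mathbf{v}_{i-1}$ (not $\mathbf{v}_i-\mathbf{v}_{i-1}-1$), and the Bethe equation rearranges to $Q_{i+1}(s_{i,k})=z_i^{\sharp}\,s_{i,k}\,\mathfrak{p}_{i+1}\,Q_{i-1}(s_{i,k})$; to land on the coefficient $\mathfrak{z}_{i+1}/\mathfrak{z}_i$ appearing in \eqref{eq:QQtToda} one must therefore read $z_i^{\sharp}=\mathfrak{z}_{i+1}/\mathfrak{z}_i$, opposite to the convention printed just below \eqref{eq:QQtToda}. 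These are consistency issues in the source rather than gaps in your argument, but since you wrote ``rearranges to exactly'' without exhibiting the rearrangement, it is worth carrying out that one line of algebra to make sure the $\mathfrak{z}$-ratio and the lone factor of $u$ (which is where your observation $\mathbf{v}_i-\mathbf{v}_{i-1}=1$ does its work) both come out on the right side of the equation.
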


\begin{proof}
Analogous to the proof of Proposition \ref{eq:PropQQtilde}.
\end{proof}

We can now formulate a statement which connects the five-vertex models with the q-Toda chain in the same way as the XXZ spin chain is dual to the tRS model (Theorem \ref{Th:XXZ/tRS}).
\begin{theorem}\label{Th:QtodaLax}
System of equations \eqref{eq:QQtToda} is equivalent to
\begin{equation}
M(u)=\det A(u)\,,
\end{equation}
where $A(u)$ is the Lax matrix of the difference Toda chain. It has the following nonzero elements
\begin{equation}
A_{i+1,i}=1\,,\quad A_{i,i}=u-\mathfrak{p}_i\,,\quad A_{i,i+1}=-u\frac{\mathfrak{z}_{i+1}}{\mathfrak{z}_{i}}\mathfrak{p}_{i+1}\,.
\end{equation}
\end{theorem}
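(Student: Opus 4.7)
The plan is to show that both sides of $M(u) = \det A(u)$ satisfy the same three-term recurrence with the same initial conditions, and thereby deduce the equivalence with the $Q\widetilde{Q}$-system \eqref{eq:QQtToda}. The key preliminary step is to determine the auxiliary polynomials $\widetilde{Q}_i(u)$ explicitly. Evaluating \eqref{eq:QQtToda} at $u=0$ gives $Q_{i+1}(0) = Q_i(0)\widetilde{Q}_i(0)$, and combining this with the definition \eqref{eq:TodaLaxMomenta} forces $\widetilde{Q}_i(0) = -\mathfrak{p}_{i+1}$. Since $\widetilde{Q}_i$ is monic of degree one, this pins it down uniquely as $\widetilde{Q}_i(u) = u - \mathfrak{p}_{i+1}$.

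Substituting this back into \eqref{eq:QQtToda} and reindexing $k=i+1$, the $Q\widetilde{Q}$-system becomes the three-term recurrence
\begin{equation*}
Q_k(u) \;=\; (u - \mathfrak{p}_k)\,Q_{k-1}(u) \;+\; \frac{\mathfrak{z}_k}{\mathfrak{z}_{k-1}}\,u\,\mathfrak{p}_k\,Q_{k-2}(u),
\end{equation*}
with initial data $Q_0(u)=1$ and $Q_1(u)=u-\mathfrak{p}_1$, the latter being a direct consequence of \eqref{eq:TodaLaxMomenta} for $i=1$. On the other hand, let $D_k(u)$ denote the determinant of the top-left $k\times k$ principal submatrix of $A(u)$. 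Because $A(u)$ is tridiagonal, cofactor expansion along the last row (or column) yields
\begin{equation*}
D_k(u) \;=\; A_{k,k}\,D_{k-1}(u) \;-\; A_{k-1,k}A_{k,k-1}\,D_{k-2}(u) \;=\; (u-\mathfrak{p}_k)\,D_{k-1}(u) \;+\; \frac{\mathfrak{z}_k}{\mathfrak{z}_{k-1}}\,u\,\mathfrak{p}_k\,D_{k-2}(u),
\end{equation*}
with $D_0=1$ and $D_1=u-\mathfrak{p}_1$. A straightforward induction gives $Q_k(u)=D_k(u)$ for all $k$; specializing to $k=n$ and recalling $M(u)=Q_n(u)$ yields the desired identity $M(u)=\det A(u)$.

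For the reverse implication, assume $M(u)=\det A(u)$ for $A(u)$ of the stated tridiagonal form. Define $Q_k(u):=D_k(u)$ and $\widetilde{Q}_i(u):=u-\mathfrak{p}_{i+1}$; the tridiagonal determinantal recurrence derived above is then precisely \eqref{eq:QQtToda}, and \eqref{eq:TodaLaxMomenta} is recovered by evaluating the recurrence at $u=0$ using that each $D_k$ is monic of degree $k$.

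The argument is essentially bookkeeping around the standard continued-fraction expansion of a tridiagonal determinant. The only conceptual step is the identification $\widetilde{Q}_i(u)=u-\mathfrak{p}_{i+1}$, which is forced by evaluating at $u=0$; once this is established, matching the two recurrences is routine, and I do not anticipate a serious obstacle beyond keeping the index shifts on $\mathfrak{z}_k/\mathfrak{z}_{k-1}$ consistent with the sign and position conventions used in \eqref{eq:QQtToda}.
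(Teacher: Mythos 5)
Your proof is correct. You first pin down the auxiliary polynomials by evaluating \eqref{eq:QQtToda} at $u=0$, which forces $\widetilde Q_i(u)=u-\mathfrak{p}_{i+1}$ given \eqref{eq:TodaLaxMomenta}, and then observe that \eqref{eq:QQtToda} collapses to the standard three-term recurrence for principal minors of the tridiagonal matrix $A(u)$, with matching initial data $Q_0=D_0=1$, $Q_1=D_1=u-\mathfrak{p}_1$. The index shifts on $\mathfrak{z}_k/\mathfrak{z}_{k-1}$ check out, and the induction to $Q_n=D_n=\det A(u)$ is immediate. (One small point: \eqref{eq:QQtToda} should be read with $i=1,\dots,n-1$, consistent with the range of the auxiliary polynomials; your reindexing handles exactly that range.)

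This is a genuinely different route from the paper's. The paper disposes of this theorem by saying it is proven ``along the lines of Theorem~\ref{Th:XXZ/tRS}'', which means: represent each $Q_j$ by a ratio of Vandermonde-type determinants as in Proposition~\ref{Th:QQVander}, verify the $Q\widetilde Q$-relations via the Desnanot--Jacobi identity, and then read off the Lax matrix by conjugating with the inverse Vandermonde matrix. That machinery is necessary in the tRS case because $L$ in \eqref{eq:tRSLAX} is dense and does not admit a simple minor recurrence. In the Toda degeneration, however, the Lax matrix is tridiagonal, and your approach exploits precisely this: the $Q\widetilde Q$-relations, once $\widetilde Q_i$ is fixed, \emph{are} the continuant recurrence for the principal minors, so no determinant identity is needed at all. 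Your argument is therefore shorter and more elementary, and it also makes manifest why the $\widetilde Q_i$ drop out of the final statement, at the cost of not being a uniform proof strategy that also covers the tRS case.
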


\begin{proof}
This statement can be readily proven along the lines of Theorem \ref{Th:XXZ/tRS}.
\end{proof}

\subsection{Compact Limit of tRS Model}
Note that the q-Toda Lax matrix $A(u)$ cannot be obtained as a scaling limit of the tRS Lax matrix \eqref{eq:tRSLAX}. However, one can directly compute q-Toda Hamiltonians from tRS Hamiltonians \eqref{eq:tRSHam1}. This limit was already discussed in the literature (see e.g. \cite{Gerasimov:2008ao} p.13). In our notations this limit can be implemented as follows. First we rescale tRS coordinates, momenta \eqref{eq:tRSRelationsEl} and equivariant parameters \eqref{eq:tRSEigenvalues} as follows
\begin{equation}
\mathfrak{z}_i= \hbar^{-i}\zeta_i\,,\qquad \mathfrak{p}_i = \hbar^{-i+1/2}p_i\,,\qquad \mathfrak{a}_i=\hbar^{-\frac{n}{2}}\mathsf{a}_i=a_i\,.
\end{equation}
Recall that tRS Hamiltonians \eqref{eq:tRSRelationsEl} were derived from XXZ Bethe equations \eqref{eq:BetheEqDetailed} after rescaling of the parameters \eqref{eq:rescaling}. Therefore, in order to restore the original notations of earlier sections, we need to take this into account. In particular, the new equivariant parameters $\mathfrak{a}_i$ coincide with the original $a_i$ parameters, whereas the new momenta $\mathfrak{p}_i$ reproduce \eqref{eq:TodaLaxMomenta}.

Second, after taking $\hbar\to\infty$ limit, we obtain q-Toda Hamiltonian functions which are equal to symmetric polynomials of $\mathfrak{a}_i$
\begin{equation}
H^{\text{q-Toda}}_r(\mathfrak{z}_1,\dots\mathfrak{z}_n;\mathfrak{p}_1,\dots, \mathfrak{p}_n)=e_r(\mathfrak{a}_1,\dots, \mathfrak{a}_n)\,,
\end{equation}
where the Hamiltonians are
\begin{equation}\label{eq:tRSRelationsElToda}
H^{\text{q-Toda}}_r=\sum_{\substack{\mathcal{I}=\{i_1<\dots<i_r\} \\ \mathcal{I}\subset\{1,\dots,n\}  }}\prod_{\ell=1}^r\left(1-\frac{\mathfrak{z}_{i_{\ell}-1}}{\mathfrak{z}_{i_\ell}}\right)^{1-\delta_{i_{\ell}-i_{\ell-1},1}}\prod\limits_{k\in\mathcal{I}}\mathfrak{p}_k \,,
\end{equation}
where $i_{0}=0$. For instance, the first Hamiltonian reads
\begin{equation}
H_1^{\text{q-Toda}}=\mathfrak{p}_1 +\sum\limits_{i=2}^n \mathfrak{p}_i \left(1-\frac{\mathfrak{z}_{i-1}}{\mathfrak{z}_{i}}\right)\,.
\end{equation}

Thus we have shown that the $\mathfrak{gl}(n)$ five-vertex model is dual to the difference Toda $n$-body system such that Bethe equations of the former \eqref{eq:XXZCompact} can be rewritten as equations of motion of the latter.

Finally we can formulate the main statement of this section. Analogously with Theorem \ref{Th:KTmain} we want to state the following theorem in terms of \textit{bona fide} K\"ahler parameters of the flag variety $z^{\#}_i$. We can put $z^{\#}_n=1$ and 
\begin{equation}
\mathfrak{z}_j = \prod_{l\geq j} z^{\#}_l\,,\qquad \frac{\mathfrak{z}_i}{\mathfrak{z}_j}=z^{\#}_{i+1}\cdots z^{\#}_{j-1}\,,\qquad j>i\,,
\end{equation}
and rewrite q-Toda Hamiltonians \eqref{eq:tRSRelationsElToda} via $z^{\#}_i$.

\begin{theorem}\label{todath}
At $\hbar=\infty$ the ring $QK_{T}(T^{*} \mathbb{F}l_n)$ has the following explicit description:
\begin{equation} \label{relatq}
\left.QK_{T}(T^{*}\mathbb{F}l_n)\right|_{\hbar=\infty}=\frac{\Complex\left[z^{\#}_1,\dots,z^{\#}_{n-1},\, \mathfrak{a}_1^{\pm 1},\dots,\mathfrak{a}_n^{\pm 1},\, \mathfrak{p}_1^{\pm 1},\dots,\mathfrak{p}_n^{\pm 1}\right]}{\left(H^{\text{q-Toda}}_r (\{\mathfrak{p}_i\},\{z^{\#}_i\})= e_r(\mathfrak{a}_1,\dots, \mathfrak{a}_n)\right)}\,,
\end{equation}
where $H^{\text{q-Toda}}_r$ are given in \eqref{eq:tRSRelationsElToda}. In particular, this ring is isomorphic to the Givental-Lee quantum K-theory of complete flag varieties from \cite{2001math8105G}: 
$$
\left.QK_{T}(T^{*}\mathbb{F}l_n)\right|_{\hbar=\infty}\cong  QK^{GL}_{T'}(\mathbb{F}l_n).
$$
\end{theorem}

\begin{proof}
After comparison with \eqref{eq:MomentaQuatnumMult} we can see that q-Toda momenta $\mathfrak{p}_i$ geometrically correspond to quantum multiplication by class $\widehat{\Lambda^j{V}_j}(z)\circledast\widehat{\Lambda^{j-1}{V^*}_{j-1}}(z)$ of the flag variety.
Then, (\ref{relatq}) follows from Proposition \ref{eq:PropositionqToda} and Theorem \ref{Th:QtodaLax}. Finally, we observe that the generators and relations in (\ref{relatq}) coincide precisely with those of $ QK^{GL}_{T'}(\mathbb{F}l_n)$ described  in \cite{2001math8105G}.
\end{proof}

\noindent {\bf Remark.} The relations of quantum K-theory ring of flag varieties to relativistic Toda chain was previously discussed by A. Kirillov and T. Maeno in an unpublished work, see also \cite{Lenart:}, \cite{Maeno:}. 
\vskip.1in

While this manuscript has been under review the following papers appeared which further develop the correspondence between quantum K-theory and integrable systems: \cite{Koroteev:2018a,Koroteev:2020,Frenkel:2020,Koroteev:2021}.

\bibliography{cpn2}

\end{document}